\date{December 27, 2023}
\DeclareMathAlphabet{\mathbb}{U}{msb}{m}{n}
\pgfplotsset{compat=1.18}
\Crefname{assumption}{Assumption}{Assumptions}
\Crefname{prop}{Proposition}{Propositions}
\Crefname{thmstepnvi}{Step}{Steps}
\Crefname{thmstepnvii}{Step}{Steps}
\Crefname{lem}{Lemma}{Lemmas}
\Crefname{thm}{Theorem}{Theorems}
\Crefname{fig}{Figure}{Figures}
\numberwithin{equation}{section}
\numberwithin{figure}{section}
\numberwithin{table}{section}
\theoremstyle{plain}
\newtheorem{thm}{\protect\theoremname}[section]
\theoremstyle{definition}
\newtheorem*{example*}{\protect\examplename}
\theoremstyle{remark}
\newtheorem{rem}[thm]{\protect\remarkname}
\theoremstyle{plain}
\newtheorem{prop}[thm]{\protect\propositionname}
\theoremstyle{plain}
\theoremstyle{plain}
\newtheorem{lem}[thm]{\protect\lemmaname}
\newlist{thmstepnv}{enumerate}{4}
\setlist[thmstepnv]{leftmargin=*,align=left,wide,labelwidth=!,itemindent=!,labelindent=0pt}
\setlist[thmstepnv,1]{label={\itshape {\thmstepname} \arabic*.},ref=\arabic*}
\setlist[thmstepnv,2]{label={\itshape {\thmstepname} {\thethmstepnvi\alph*}.},ref=\thethmstepnvi\alph*}
\setlist[thmstepnv,3]{label={\itshape {\thmstepname\ \alph*.}},ref=\alph*}
\setlist[thmstepnv,4]{label={\itshape {\thmstepname} \arabic*.},ref=\arabic*}
\theoremstyle{plain}
\newtheorem{cor}[thm]{\protect\corollaryname}
\newlist{casenv}{enumerate}{4}
\setlist[casenv]{leftmargin=*,align=left,widest={iiii}}
\setlist[casenv,1]{label={{\itshape\ \casename} \arabic*.},ref=\arabic*}
\setlist[casenv,2]{label={{\itshape\ \casename} \roman*.},ref=\roman*}
\setlist[casenv,3]{label={{\itshape\ \casename\ \alph*.}},ref=\alph*}
\setlist[casenv,4]{label={{\itshape\ \casename} \arabic*.},ref=\arabic*}
\providecommand{\casename}{Case}
\providecommand{\conjecturename}{Conjecture}
\providecommand{\corollaryname}{Corollary}
\providecommand{\definitionname}{Definition}
\providecommand{\examplename}{Example}
\providecommand{\assumptionname}{Assumption}
\providecommand{\lemmaname}{Lemma}
\providecommand{\propositionname}{Proposition}
\providecommand{\remarkname}{Remark}
\providecommand{\theoremname}{Theorem}
\providecommand{\thmstepname}{Step}
\global\long\def\e{\mathrm{e}}%
\global\long\def\RR{\mathbb{R}}%
 \global\long\def\Pr{\mathbb{P}}%
\global\long\def\NN{\mathbb{N}}%
\global\long\def\ZZ{\mathbb{Z}}%
\global\long\def\clC{\mathcal{C}}%
\global\long\def\clK{\mathcal{K}}%
\global\long\def\sgn{\operatorname{sgn}}%
\global\long\def\Lip{\operatorname{Lip}}%
\global\long\def\dif{\mathrm{d}}%
\global\long\def\eps{\varepsilon}%
\global\long\def\Bin{\operatorname{Bin}}%
\global\long\def\Rm{\mathbb{R}}
\title{Branching Brownian motion with generation-dependent diffusivity and nonlocal partial differential equations}
\author{Alexander Dunlap\thanks{Department of Mathematics, Duke University, Durham, NC 27708 USA. \url{dunlap@math.duke.edu}.}\and Lenya Ryzhik\thanks{Department of Mathematics, Stanford University, Stanford, CA 94305 USA. \url{ryzhik@math.stanford.edu}.}}
\begin{document}
\maketitle
\begin{abstract}
   We study a voting model on a branching Brownian motion process on $\RR$ in which the diffusivity of each child particle is increased from that of the parent by a factor of $\gamma>1$. The probability distribution of the overall vote is given in terms of the solution to a nonlocal nonlinear PDE\@. We exhibit conditions on the nonlinearity such that the long-time behavior of the distribution undergoes a phase transition in $\gamma$. If~$\gamma$ is sufficiently large, then the long-time distribution converges to uniform. If $\gamma$ is close enough to~$1$, then the long-time distribution depends in a nontrivial way on the location of the initial particle. The limiting dependence is given by a steady-state solution to the nonlocal PDE\@.

Our study gives a probabilistic interpretation of  a class of semilinear nonlocal PDEs.
Interestingly, while the PDE are nonlocal, the underlying random process does not require
any non-local interactions.
\end{abstract}
\section{Introduction}

We consider a branching Brownian motion process on $\RR$, in which offspring particles move faster than their parent by a constant factor $\gamma>1$, but branch at the same rate~$\zeta>0$. We start with a single particle at position $x\in\RR$, which we label with a starting generation number $\ell\in\ZZ$. A particle in generation $k\in\ZZ$ evolves as a Brownian motion with generator~$\frac12\gamma^{2k}\Delta$. Each particle is also equipped with an (independent) exponential clock with rate $\zeta>0$ (not depending on $k$). When the clock rings, the particle  is replaced at its present location by a random positive number of particles (with probability $p_i$ of having $i$ children, $i\ge 1$), each of generation $k+1$, which then undergo the same process, independently of each other. In other words, each particle's offspring move faster than the parent by a factor of $\gamma>1$, but branch at the same rate. Let us emphasize that in this paper we only consider the case $\gamma>1$, when the mobility 
grows from generation to generation. When $\gamma<1$, the particles essentially get nearly frozen after a finite time and the questions of interest are different. 

At a final time $t$, the living particles each cast a ``vote'' according to their current position. A particle at position $y\in\RR$ casts a vote of $\sgn(y)\in\{-1,1\}$ (almost surely). The votes are then propagated up the genealogical tree in accordance with the ``random threshold'' model defined in \cite[\S 3.3]{AHR22}, generalizing the original
majority voting model of~\cite{EFP17}. We refer to~\cite{AHR22} and~\cite{OD19} for a general discussion of the connections between voting models and semilinear
parabolic equations. 
We fix a (deterministic) collection of probabilities~$(\eta_{n,k})$ for~$n\ge1$ and~$1\le k\le n$, with 
\[
\sum_{k=1}^{n} \eta_{n,k} =1
\]
for each $n$.
Each particle on the genealogical tree
votes as follows. Suppose the particle has $n$ children. The particle samples a threshold~$L\in\{1,\ldots,n\}$, independent of the votes and the other thresholds, with 
\[
\Pr(L=k) = \eta_{n,k}.
\]
Then the particle votes ``$1$'' if at least $L$ of its children voted ``$1$,'' and ``$-1$'' otherwise.

\begin{example*}
    The reader will not lose much by taking \begin{equation}\label{eq:majorityetas}\eta_{n,k} \coloneqq \frac12(\delta_{k,\lfloor (n+1)/2\rfloor}+\delta_{k,\lceil (n+1)/2\rceil}),\end{equation} in which case each parent votes along with the majority of its children, breaking ties by a coin flip. This will be a running example throughout the paper. The case where children move at the same speed as the parents ($\gamma=1$ in our notation) and $n=3$ is fixed was studied 
    in \cite{EFP17}.
\end{example*}

Our interest is in the probability $p_\ell(t,x)$ that the initial particle votes $1$, which is a function of the voting time $t$, the initial position $x$, and the starting generation $\ell$. In particular, we study the question of whether $p_\ell(t,x)$ depends on the starting position $x$ in the limit~$t\to\infty$.

Let 
\begin{equation}
u_{\ell}(t,x)=2p_\ell(t,x)-1,
\end{equation}
so that $-1\le u_\ell(t,x)\le 1$. By a renewal analysis following~\cite[(3.11)]{AHR22}, we can write
\begin{equation}\label{eq:uellduhamel}
    u_{\ell}(t,x) = \e^{t(\frac12\gamma^{2\ell}\Delta-\zeta)}\sgn(x)+\zeta\int_0^t\e^{(t-s)(\frac12\gamma^{2\ell}\Delta-\zeta)}(F\circ u_{\ell+1}(s,\cdot))(x)\,\dif s.
\end{equation}
Here, we define
\begin{equation}\label{eq:Fdef}
     F(u) %
     =2\sum_{n\ge 1}p_n \sum_{k=1}^n \eta_{n,k} \Pr_{\frac{1+u}{2}}(X_n \ge k)-1
     =\sum_{n\ge 1}2^{1-n}p_n \sum_{k=1}^n \eta_{n,k}\sum_{j=k}^n\binom n j (1+u)^j(1-u)^{n-j}-1,
\end{equation}
using the notation $\Pr_p$ to denote a probability measure under which $X_n \sim \Bin(n,p)$. 
Note that
\begin{equation}\label{dec1202}
    F(-1)=-1\qquad\text{and}\qquad F(1)=1.
\end{equation}

The equation \cref{eq:uellduhamel} is the Duhamel formula for the solution to the infinite system of PDEs
\begin{align}
    \partial_tu_{\ell}(t,x) &= \frac12\gamma^{2\ell}\Delta u_{\ell}(t,x) + \zeta[F(u_{\ell+1}(t,x))-u_{\ell}(t,x)],&& \ell\in\ZZ,~t>0,~x\in\RR;\label{eq:uellPDE}\\
    u_{\ell}(0,x) &= \sgn(x),&&\ell\in\ZZ,~x\in\RR.\label{eq:uellIC}
\end{align}
indexed by the generation $\ell$.
 
We now note by rescaling the original particle system (and analogously by rescaling the system of PDEs \crefrange{eq:uellPDE}{eq:uellIC}), that
\begin{equation}
    u_{\ell}(t,x)=u_0(t,\gamma^{-\ell}x)\qquad
    \text{for all $\ell\in\ZZ$, $t\ge 0$, and $x\in\RR$.}\label{eq:rescale}
\end{equation}
Writing now $u=u_0$, we see that $u$ satisfies the single \emph{nonlocal} PDE
\begin{align}
    \partial_tu(t,x) &= \frac12\Delta u(t,x) + \zeta[F(u(t,\gamma^{-1}x))-u(t,x)],&& t>0,~x\in\RR;\label{eq:uPDE-wholeline}\\
    u(0,x) &= \sgn(x),&&x\in\RR.\label{eq:uIC-wholeline}
\end{align}
Note that when the mobility growth factor $\gamma=1$, the PDE \cref{eq:uPDE-wholeline} becomes local.
Thus, we will refer to $\gamma=1$ as the ``local case.'' Let us also mention that if were looking for a similar probabilistic description for the
solutions to \crefrange{eq:uPDE-wholeline}{eq:uIC-wholeline}, with an initial condition in \cref{eq:uIC-wholeline}
different from $\sgn(x)$, then the initial conditions for $u_\ell$ in \crefrange{eq:uellPDE}{eq:uellIC} would be 
$\ell$-dependent. It is straightforward to adapt the voting model to that situation. We use the $\sgn(x)$-based voting rule for the sake of simplicity.

Our analysis will apply to the case when $F$ is an odd function, which is the case whenever 
\[
\eta_{n,k} = \eta_{n,n-k+1}\qquad\text{for each $n,k$}.
\] 
This is satisfied for example by the $\eta_{n,k}$ defined in \cref{eq:majorityetas}.
In this case, $u(t,\cdot)$ remains an odd function for all $t\ge 0$, and so we can recast the problem \crefrange{eq:uPDE-wholeline}{eq:uIC-wholeline} 
as a boundary value problem on the half line:
\begin{align}
    \partial_tu(t,x) &= \frac12\Delta u(t,x) + \zeta[F(u(t,\gamma^{-1}x))-u(t,x)],&& t,~x>0;\label{eq:uPDE}\\
    u(t,0) &= 0,&&t>0;\label{eq:uDirichlet}\\
    u(0,x) &= 1,&&x>0;\label{eq:uIC}
\end{align}

For an odd function $F$, we define
\begin{equation}\label{dec1204}
\Xi_F\coloneqq \inf\{v>0\mid F(v)\le v\}.
\end{equation}
We note that $\Xi_F$ is a fixed point of $F$ and hence a constant solution to \cref{eq:uPDE}.
By \cref{dec1202}, we see that $\Xi_F\le 1$, since $1$ and $-1$ are fixed points of $F$. %
If $F'(0)\le 1$, it may even be the case that $\Xi_F = 0$.

\begin{rem}\label{rem-dec1202}
Let us recall that in the local case $\gamma=1$, a non-trivial steady solution to \cref{eq:uPDE} exists as soon as $F'(0)>1$, so that $F(u)>u$
in a neigborhood of $u=0$ and $\Xi_F>0$. In particular, if $\Xi_F=1$, such solution will tend to $1$ as $x\to+\infty$, so that particles starting
sufficiently far away from $x=0$ will have a very high probability to vote $1$. Our results below show that this phenomenology persists if 
the particle mobility growth rate $\gamma$ is not too large, but breaks if $\gamma$ is sufficiently large. 
\end{rem}

\begin{rem}
The above construction gives a probabilistic interpretation to solutions to the
nonlocal PDE \cref{eq:uPDE-wholeline}.  {The nonlocality} can be thought of as a multiplicative
convolution  {(with a shifted delta function in the present case)}. A perhaps surprising aspect is that such nonlocal equations can be
realized directly by branching Brownian motion with no jumps or non-local interactions
but simply introducing a generation-dependent particle mobility. We should mention 
that~\cite{ABBP19,ABP17} 
discovered a probabilistic interpretation for non-local PDE of the form 
\begin{equation}\label{dec2202}
\partial_tu=\Delta u+u(1-\phi* u),
\end{equation}
with an additive rather than multiplicative  
convolution. However, there the PDE arose in a hydrodynamic limit, and the underlying probabilistic 
model did involve nonlocal interactions, unlike
the construction in the present paper. On the other hand, it would be interesting to find
a connection between the
Brownian motion for particles with time-dependent masses considered in those references
and the branching process with generation-dependent mobility that we consider here. 
It is not clear if a version of \cref{dec2202} admits a probabilistic interpretation 
via a branching process without non-local interactions. 
\end{rem}

Our first result is the following.
\begin{thm}\label{thm:convergence}
    Assume that $F$ is an odd function. The limit
    \begin{equation}U_{\infty}(x)\coloneqq\lim_{t\to\infty} u(t,x)\label{eq:limitexists}\end{equation}
    exists. Moreover, $U=U_{\infty}|_{(0,\infty)}$ is the maximal solution to the nonlocal elliptic PDE problem
\begin{align}
    &0 = \frac12\Delta U(x) + \zeta[F(U(\gamma^{-1}x))-U(x)],&& x>0;\label{eq:nonlocalelliptic}\\
    &U(0) = 0;\label{eq:nonlocalellipticboundary}\\
    &U(x) \le 1,&&x>0.\label{eq:nonlocalellipticlt1}
\end{align}
\end{thm}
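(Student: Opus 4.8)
The plan is to combine monotonicity in time with a comparison principle for the nonlocal parabolic problem \eqref{eq:uPDE}--\eqref{eq:uIC}. The key structural facts are that $F$ is nondecreasing on $[-1,1]$ (since $\Pr_p(X_n\ge k)$ is increasing in $p$) and that $F$ is Lipschitz on $[-1,1]$ (its derivative is bounded by the mean offspring number, which is finite under the standing assumptions underlying e.g.\ Remark~\ref{rem-dec1202}). I would first record the comparison principle: writing $P^{D}_t=\e^{t(\frac12\Delta-\zeta)}$ for the Dirichlet heat semigroup on $(0,\infty)$ --- which is positivity preserving and an $L^\infty$-contraction --- and noting that $\gamma^{-1}x\in(0,\infty)$ whenever $x>0$, the Duhamel identity shows that if $w_1$ is a supersolution and $w_2$ a subsolution of \eqref{eq:uPDE} with $w_1\ge w_2$ at $t=0$ and at $x=0$ (all functions being bounded), then a Gr\"onwall estimate on $t\mapsto\|(w_2(t,\cdot)-w_1(t,\cdot))_+\|_{L^\infty}$ forces $w_1\ge w_2$ for all $t$.

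With this in hand, the a priori bounds $0\le u(t,x)\le1$ follow by comparing $u$ with the stationary solutions $1$ and $0$: here $F(1)=1$ comes from \eqref{dec1202} and $F(0)=0$ from oddness, both satisfy the boundary condition at $x=0$ with the correct inequalities, and the initial datum $\mathbf 1_{(0,\infty)}$ lies between them. Comparing $u(t+h,\cdot)$ with $u(t,\cdot)$ --- which solve the same equation, agree at $x=0$, and satisfy $u(h,\cdot)\le1=u(0,\cdot)$ --- then shows $t\mapsto u(t,x)$ is nonincreasing. A bounded monotone family converges pointwise, which gives the existence of $U_\infty(x)=\lim_{t\to\infty}u(t,x)\in[0,1]$, proving \eqref{eq:limitexists}.

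To identify the limit, I would pass to $t\to\infty$ in the Duhamel formula \eqref{eq:uellduhamel} with $\ell=0$, using the scaling identity \eqref{eq:rescale} to write $u_1(s,x)=u(s,\gamma^{-1}x)$, so that $u(t,x)=\e^{t(\frac12\Delta-\zeta)}\sgn(x)+\zeta\int_0^t\e^{(t-s)(\frac12\Delta-\zeta)}F(u(s,\gamma^{-1}\cdot))(x)\,\dif s$. The first term equals $\e^{-\zeta t}\e^{\frac t2\Delta}\sgn(x)\to0$. Substituting $r=t-s$ in the integral and using that $u(t-r,\gamma^{-1}\cdot)\to U_\infty(\gamma^{-1}\cdot)$ pointwise and boundedly for each fixed $r$, two applications of dominated convergence (inside $\e^{\frac r2\Delta}$, then in $r$ against the integrable weight $\zeta\e^{-\zeta r}$) yield $U_\infty(x)=\zeta\int_0^\infty\e^{r(\frac12\Delta-\zeta)}F(U_\infty(\gamma^{-1}\cdot))(x)\,\dif r$, which is the resolvent identity for $0=\frac12\Delta U_\infty+\zeta[F(U_\infty(\gamma^{-1}\cdot))-U_\infty]$ on $\RR$. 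Since $F$ is continuous and $U_\infty$ bounded, the right-hand side is continuous, so elliptic regularity upgrades $U_\infty$ to a classical $C^2$ solution; restricting to $x>0$ gives \eqref{eq:nonlocalelliptic}, while $U_\infty(0)=0$ (from oddness of $u$) and $0\le U_\infty\le1$ give \eqref{eq:nonlocalellipticboundary}--\eqref{eq:nonlocalellipticlt1}.

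For maximality, let $V$ be any bounded solution of \eqref{eq:nonlocalelliptic}--\eqref{eq:nonlocalellipticlt1}. Regarding $V$ as a stationary solution of \eqref{eq:uPDE}, it satisfies $V(0)=0=u(t,0)$ and $V\le1=u(0,\cdot)$, so the comparison principle gives $V(x)\le u(t,x)$ for every $t>0$; letting $t\to\infty$ yields $V\le U_\infty=U$ on $(0,\infty)$. Hence $U$ is the maximal solution. I expect the main obstacle to be making the comparison principle genuinely rigorous on the unbounded half-line in the presence of the nonlocal term: one must control the behavior as $x\to+\infty$ (via a Phragm\'en--Lindel\"of argument, or by exploiting the global $L^\infty$ bound together with the dissipative $-\zeta u$ term) and track how the Lipschitz constant of $F$ enters the Gr\"onwall estimate; once that is set up, the remaining steps are routine.
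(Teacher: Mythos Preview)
Your argument is correct and follows a genuinely different route from the paper's. The paper localizes to a bounded interval $[0,L]$ via a cut-off branching process, proves a classical (pointwise maximum) comparison principle there, establishes monotonicity of $u^{[L]}$ in $t$, and only then passes to $L\to\infty$; to identify the limit $U_\infty$ as a solution it invokes parabolic regularity and Schauder estimates to get $C^{2+\delta}$ compactness of $\{u(t,\cdot)\}_t$. By contrast, you work directly on the half-line, using the Duhamel representation together with an $L^\infty$ Gr\"onwall bound (exploiting that $F$ is nondecreasing and Lipschitz with constant at most the mean offspring number) to obtain comparison without any cut-off, and you identify the limit by passing to $t\to\infty$ in the Duhamel formula itself, landing on the resolvent identity and bootstrapping to $C^2$. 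Your approach is arguably more self-contained here, since the semigroup machinery is already set up in \cref{eq:uellduhamel}, and it sidesteps both the auxiliary cut-off dynamics and the appeal to Schauder theory. The paper's route, on the other hand, yields a comparison principle stated for piecewise-$C^2$ sub/supersolutions with derivative jumps (\cref{prop:comparisonprinciple}), which is exactly the form needed later for the hand-built subsolutions in the proof of \cref{thm:dichotomy}(\ref{enu:sub}); your Duhamel/Gr\"onwall version would need a small adaptation (integrating the derivative-jump condition into a mild-supersolution inequality) to serve that purpose.
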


One solution to \crefrange{eq:nonlocalelliptic}{eq:nonlocalellipticlt1} is of course given by $U\equiv 0$. Our main question concerns whether this is the only solution, or if there is a greater solution (the greatest of which would be $U_\infty$). The answer depends on $F$ and $\gamma$, and is the subject of our second result.
\begin{thm}\label{thm:dichotomy}
    Assume that $F$ is an odd function.
    \begin{enumerate}
        \item \label{enu:super} If $\gamma>\Upsilon_F\coloneqq \sup\limits_{v\in (0,1]}F(v)/v$, then $U_\infty\equiv 0$.
        \item \label{enu:sub} If $\gamma<F'(0)$, then \begin{equation}\liminf\limits_{x\to+\infty}U_\infty(x)\ge \Xi_F.\label{eq:Uinftynottozero}\end{equation}
    \end{enumerate}
\end{thm}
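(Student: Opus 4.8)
The plan is to bracket $u$ between explicit sub/supersolutions of the nonlocal parabolic problem \crefrange{eq:uPDE}{eq:uIC}. I will use throughout that this problem obeys a comparison principle on $\{x>0\}$ with its Dirichlet datum at the origin, for bounded (or at most polynomially growing) solutions: since $F$ is nondecreasing and locally Lipschitz --- a fact that follows from the voting interpretation --- if $v_1\le v_2$ on the parabolic boundary with $v_1$ a subsolution and $v_2$ a supersolution, then $M(t):=\sup_{x>0}(v_1-v_2)^+(t,x)$ satisfies a differential inequality $M'\le CM$ (the nonlocal sampling at $\gamma^{-1}x$ is harmless because $(v_1-v_2)^+(t,\gamma^{-1}x)\le M(t)$), so $M\equiv0$. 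I also use $0\le F\le1$ and $F(1)=1$ on $[0,1]$, so that $\Upsilon_F\ge1$, and $0\le u\le1$ on $(0,\infty)$. For part~\ref{enu:super}: since $F(v)\le\Upsilon_F v$ on $[0,1]$, the function $u$ is a subsolution of the \emph{linear} nonlocal equation obtained by replacing $F(v)$ with $\Upsilon_F v$, with the same data, so $u\le\bar u$ for the associated linear solution $\bar u$, and it suffices to prove $\bar u(t,x)\to0$ for each fixed $x$. Because $\gamma>\Upsilon_F\ge1$, fix $\alpha\in(\log\Upsilon_F/\log\gamma,\,1)$; then $\Upsilon_F\gamma^{-\alpha}<1$ and $\alpha(\alpha-1)<0$, so a direct computation shows $(t,x)\mapsto\e^{-\mu t}x^{\alpha}$ is a supersolution of the linear equation for all small $\mu>0$ (both the $x^{\alpha-2}$ and the $x^{\alpha}$ contributions come out nonpositive). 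Parabolic boundary regularity gives $\bar u(1,x)\le Cx$ near $x=0$, and $\bar u(1,\cdot)$ is bounded, so $\bar u(1,x)\le Cx^{\alpha}$ for all $x\ge0$; comparison on $[1,\infty)\times(0,\infty)$ with the barrier $C\e^{-\mu(t-1)}x^{\alpha}$ (which dominates $\bar u$ at $t=1$, at $x=0$, and as $x\to\infty$) yields $\bar u(t,x)\le C\e^{-\mu(t-1)}x^{\alpha}\to0$, whence $U_\infty\equiv0$.

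For part~\ref{enu:sub}, note first that $F'(0)>\gamma>1$ forces $\Xi_F>0$ and $F(v)>v$ on $(0,\Xi_F)$. I will construct, for each small $\eps>0$, a bounded stationary subsolution $V_\eps\colon[0,\infty)\to[0,\Xi_F-\eps]$ of \crefrange{eq:nonlocalelliptic}{eq:nonlocalellipticlt1} with $V_\eps(0)=0$ and $V_\eps(x)\to\Xi_F-\eps$ as $x\to\infty$. Granting this: $V_\eps\le1=u(0,\cdot)$ and $V_\eps(0)=0=u(t,0)$, so comparison gives $u(t,\cdot)\ge\underline u_\eps(t,\cdot)$, where $\underline u_\eps$ solves \crefrange{eq:uPDE}{eq:uIC} with initial datum $V_\eps$; since $V_\eps$ is a stationary subsolution, $\underline u_\eps(t,\cdot)$ is nondecreasing in $t$ and bounded above by $u\le1$, hence increases to a bounded stationary solution that is $\ge V_\eps$. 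Passing to $t\to\infty$ via \cref{thm:convergence} yields $U_\infty\ge V_\eps$ pointwise, so $\liminf_{x\to\infty}U_\infty(x)\ge\Xi_F-\eps$; letting $\eps\downarrow0$ then gives \cref{eq:Uinftynottozero}.

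To build $V_\eps$ I pass to the logarithmic variable: writing $V_\eps(x)=\psi(\log x)$ with $\psi\colon\RR\to[0,\Xi_F-\eps]$, the subsolution inequality becomes
\[
\tfrac12\e^{-2\xi}\bigl(\psi''-\psi'\bigr)(\xi)+\zeta\bigl[F\bigl(\psi(\xi-\log\gamma)\bigr)-\psi(\xi)\bigr]\ge0 .
\]
I take $\psi$ equal to $A\e^{\kappa\xi}$ near $\xi=-\infty$, with $\kappa\in\bigl(1,\,\log F'(0)/\log\gamma\bigr)$ --- this interval is nonempty precisely because $\gamma<F'(0)$, and this is exactly where that hypothesis enters; then $\psi''-\psi'=A\kappa(\kappa-1)\e^{\kappa\xi}\ge0$, and since $\psi(\xi-\log\gamma)=\gamma^{-\kappa}\psi(\xi)$ with $\gamma^{\kappa}<F'(0)$, the bound $F(v)/v\to F'(0)>\gamma^{\kappa}$ as $v\to0$ gives $F(\gamma^{-\kappa}\psi(\xi))\ge\psi(\xi)$ once $\psi(\xi)$ is small enough, so the reaction term is also $\ge0$ there. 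Next $\psi$ rises \emph{very slowly} --- slope of order $(\Xi_F-\eps)/T$ over a long window of length $T$ --- through the middle range: there $\psi(\xi-\log\gamma)$ is within $O(T^{-1})$ of $\psi(\xi)$, while $F(v)-v$ is bounded below by a positive constant on compact subsets of $(0,\Xi_F)$, so $F(\psi(\xi-\log\gamma))-\psi(\xi)>0$ dominates the negligible $\e^{-2\xi}|\psi''-\psi'|$. Finally $\psi$ flattens to the constant $\Xi_F-\eps$ near $\xi=+\infty$, where $\e^{-2\xi}$ is negligible and $F(\Xi_F-\eps)-(\Xi_F-\eps)>0$. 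Rounding the two joints into convex corners (or smoothing them) keeps $V_\eps$ a genuine weak/viscosity subsolution, and $V_\eps(0):=0$ is consistent with the boundary condition because $\kappa>1$.

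The main obstacle is the construction and verification of $V_\eps$ in part~\ref{enu:sub}. The delicate point is that the nonlocal term $F(\psi(\xi-\log\gamma))$ samples the profile at a point that may lie in a different ``piece'' than $\xi$ itself --- in particular in the window of length $\log\gamma$ straddling each joint, where $\psi(\xi)$ and $\psi(\xi-\log\gamma)$ can sit in neighbouring regimes --- so the inequality must be checked uniformly across the transitions between the superlinear-growth regime near $0$ (where $\gamma<F'(0)$ is what makes a growing subsolution possible at all), the slowly varying middle, and the flat far field, with the joints kept subsolution-friendly. By contrast part~\ref{enu:super} is comparatively routine once one observes that a sub-linear power $x^{\alpha}$ serves as the spatial barrier.
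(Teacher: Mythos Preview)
For part~\ref{enu:super} your argument is essentially the paper's: the barrier $\e^{-\mu t}x^{\alpha}$ is the paper's $v_{\delta,\xi,\omega}$ (the paper actually takes $\omega=1$ for this statement, but your $\alpha<1$ works equally well and is what is used for the speed estimate). Your comparison-principle sketch via $M(t)=\sup_{x>0}(v_1-v_2)^+$ and $M'\le CM$ needs either localisation to $[0,L]$ or the observation that the supersolution blows up at infinity so any negative minimum is attained at an interior point --- the paper does the latter in \cref{prop:lt0} --- but this is a technicality, not a gap.

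For part~\ref{enu:sub} your route is genuinely different from the paper's. The paper first replaces $F$ by an auxiliary $H\le F$ with tailored properties (linear near $0$, convex near $\Xi_F$), then builds $w$ on dyadic shells via the functional equation $w(x)=H(w(Bx))$ for some $B<\gamma^{-1}$, with the seed on $[0,1]$ produced by a separate inductive lemma; the payoff of this heavier machinery is that the same $w$ also yields the spreading-speed lower bound of \cref{thm:speed}(\ref{enu:speedsub}). Your three-piece profile in $\log x$ (power $\kappa>1$ / slow ramp / plateau at $\Xi_F-\eps$) is more elementary and is in principle adequate for \cref{thm:dichotomy}(\ref{enu:sub}) alone.

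There is, however, a real error in your treatment of the joints. Both transitions --- from $A\e^{\kappa\xi}$ (slope $\kappa\delta_0$ at the joint) to a ramp of slope $O(1/T)$, and from that ramp to a constant --- have the slope \emph{decrease}. The corners are therefore concave, not convex; a concave kink carries a negative Dirac mass in $V''$ and violates the subsolution inequality, so ``rounding into convex corners'' is simply impossible here. Smoothing \emph{can} rescue the construction, but not for free: over a mollification window of width $\eta$ at the first joint one has $\psi''\approx -\kappa\delta_0/\eta$, contributing roughly $-\tfrac12\e^{-2\xi_0}\kappa\delta_0/\eta$ to the operator, and this must be absorbed by the reaction term, which is of order $\zeta(F'(0)\gamma^{-\kappa}-1)\delta_0$. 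That forces $\eta\gtrsim\e^{-2\xi_0}$, which is harmless only once you observe that $\xi_0$ can be sent to $+\infty$ by taking $A\downarrow 0$ with $\delta_0=A\e^{\kappa\xi_0}$ held fixed. This is the genuine role of the free amplitude $A$, and it is the direct analogue of the paper's final step of shrinking the scale $\xi$ in $\underline v(x)=w(\xi x)$. You should make this mechanism explicit and drop the ``convex corners'' claim; once that is done, your construction goes through.
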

\begin{rem}  %
Let us recall that when $F'(0)>1$, the state~$u=0$ is an unstable
equilibrium of the ODE 
\begin{equation}
\dot u=F(u)-u,
\end{equation}
This leads to the ``finite front width" picture in the local case~$\gamma=1$ described in \cref{rem-dec1202}. On the other hand,
Theorems~\ref{thm:convergence} and~\ref{thm:dichotomy}(\ref{enu:super}) show that if  $F'(0)>1$ but $\gamma>\Upsilon_F$ (that is,  the 
particle mobility grows sufficiently fast), then we see growth in time of the region of the starting particle for which the ultimate vote is $1$ and $-1$ with nearly equal probabilities.
In other words, a sufficiently high mobility growth rate  stabilizes the state $u=0$,
a situation that is impossible with $\gamma=1$.
\end{rem}
 
\begin{rem} A natural question is whether the thresholds in the two parts of \cref{thm:dichotomy} match.
    If the Fisher--KPP-type condition
    \begin{equation}F(v)\le F'(0)v \qquad\text{for all }v\in [0,1],\label{eq:KPP}
    \end{equation} holds,
    then $\Upsilon_F= F'(0)$, and \cref{thm:dichotomy} leaves only a single critical value $\gamma = F'(0)$ at which we do not determine the behavior. 
To avoid some confusion of terminology, let us recall that the nonlinearity that appears in \cref{eq:uPDE}  in the local case $\gamma=1$ is~$F(u)-u$
and not~$F(u)$. Moreover, in the  \ordinalstringnum{\getrefnumber{enu:sub}} case of \cref{thm:dichotomy}, when the solutions to~\crefrange{eq:nonlocalelliptic}{eq:nonlocalellipticlt1}  are nontrivial, the solutions do not tend to $0$ as $x\to+\infty$.
Thus, the assumption~\cref{eq:KPP}
on $F(u)$ does not at all mean that the evolution in \cref{eq:uPDE} is expected to be of the Fisher--KPP type. Rather, the oddness assumption
on $F(u)$ means that we are in a bistable (or multistable) situation in the terminology of the reaction-diffusion equations.
\end{rem}
\begin{prop}\label{prop:majorityiskpp}
    The majority voting procedure given by \cref{eq:majorityetas} yields an $F$ satisfying \cref{eq:KPP}, for any choice of the $p_n$s. Moreover, in this case we have
    \begin{equation}\label{eq:Fprime0}
        F'(0) = \sum_{n\ge 1}p_n\left(2^{1-n} \lceil n/2\rceil \binom n {\lfloor n/2\rfloor}\right),
    \end{equation}
    and, if there is some $n\ge 3$ with $p_n>0$, then $F'(0)>1$ and
        $\Xi_F = 1$.%
\end{prop}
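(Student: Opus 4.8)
The plan is to split $F$ according to the number of offspring and to exploit the familiar ``sigmoid'' shape of the majority function. For $p\in[0,1]$ put $G_n(p)\coloneqq\sum_{k=1}^n\eta_{n,k}\Pr_p(X_n\ge k)$ with $X_n\sim\Bin(n,p)$ and the $\eta_{n,k}$ from \cref{eq:majorityetas}, so that $G_n(p)=\Pr_p\bigl(X_n\ge\tfrac{n+1}2\bigr)$ for odd $n$ and $G_n(p)=\Pr_p\bigl(X_n\ge\tfrac n2+1\bigr)+\tfrac12\Pr_p\bigl(X_n=\tfrac n2\bigr)$ for even $n$. Setting $H_n(v)\coloneqq 2G_n\bigl(\tfrac{1+v}2\bigr)-1$, the symmetry $G_n(1-p)=1-G_n(p)$ of the binomial law makes $H_n$ odd with $H_n(0)=0$ and $H_n(1)=1$, and comparison with \cref{eq:Fdef} together with $\sum_n p_n=1$ gives $F=\sum_{n\ge1}p_nH_n$ on $[-1,1]$. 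Thus it suffices to prove the analogous statements for each individual $H_n$ and then average.

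The key structural input is that each $G_n$ is an ``S-curve.'' Differentiating the binomial tail through the standard telescoping identity $\tfrac{\dif}{\dif p}\Pr_p(X_n\ge k)=n\binom{n-1}{k-1}p^{k-1}(1-p)^{n-k}$, and for even $n$ also differentiating the atom $\Pr_p(X_n=\tfrac n2)$ and collecting terms, one gets $G_n'(p)=c_n\bigl(p(1-p)\bigr)^{d_n}$ for a constant $c_n>0$ and $d_n=\lfloor(n-1)/2\rfloor$. Consequently $G_n'$ is nondecreasing on $[0,\tfrac12]$ and nonincreasing on $[\tfrac12,1]$, so $G_n$ is concave on $[\tfrac12,1]$, whence $H_n$ is concave on $[0,1]$. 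A concave function on $[0,1]$ taking the values $0$ and $1$ at the endpoints lies above its endpoint chord and below its tangent at $0$, i.e.\ $v\le H_n(v)\le H_n'(0)v$ for $v\in[0,1]$. Averaging with weights $p_n$ gives $v\le F(v)\le F'(0)v$ on $[0,1]$ — in particular \cref{eq:KPP} — and identifies $F'(0)=\sum_n p_n H_n'(0)=\sum_n p_n G_n'(\tfrac12)$ (the termwise differentiation is justified since $v\mapsto H_n(v)/v$ increases to $H_n'(0)$ as $v\downarrow0$, so one may pass to the limit by monotone convergence; if the series is $+\infty$, the remaining assertions hold a fortiori). Evaluating $G_n'$ at $p=\tfrac12$ and simplifying the binomial coefficient in each parity gives $G_n'(\tfrac12)=2^{1-n}\lceil n/2\rceil\binom n{\lfloor n/2\rfloor}$, which is \cref{eq:Fprime0}.

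It then remains to study the numbers $a_n\coloneqq 2^{1-n}\lceil n/2\rceil\binom n{\lfloor n/2\rfloor}$. A short computation of consecutive ratios gives $a_{2m}/a_{2m-1}=1$ and $a_{2m+1}/a_{2m}=\tfrac{2m+1}{2m}$, so $(a_n)$ is nondecreasing with $a_1=a_2=1$ and $a_n\ge a_3=\tfrac32$ for every $n\ge3$. Hence, if $p_{n_0}>0$ for some $n_0\ge3$, then, using $\sum_n p_n=1$ and $a_n\ge1$, we get $F'(0)=\sum_n p_n a_n\ge 1+p_{n_0}(a_{n_0}-1)>1$. For the remaining claim I would show $F(v)>v$ for all $v\in(0,1)$; this gives $\Xi_F=1$ since then the set $\{v>0\mid F(v)\le v\}$ contains no point of $(0,1)$ but does contain $1$ (because $F(1)=1$). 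We already have $F(v)\ge v$ on $[0,1]$. If equality held at some $v_*\in(0,1)$, then, since $H_n(v_*)\ge v_*$ for all $n$ and $p_{n_0}>0$, we would need $H_{n_0}(v_*)=v_*$; but $H_{n_0}$ is concave on $[0,1]$ and agrees with the affine map $v\mapsto v$ at $0$, $1$ and $v_*$, which by a standard property of concave functions forces $H_{n_0}(v)=v$ throughout $[0,1]$ and hence $H_{n_0}'(0)=1$, contradicting $H_{n_0}'(0)=a_{n_0}\ge\tfrac32$. Therefore $F(v)>v$ on $(0,1)$ and $\Xi_F=1$.

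The routine ingredients are the telescoping derivative identity and the evaluation of the ratios $a_{n+1}/a_n$. The step I expect to cost the most effort is the uniform verification of the shape formula $G_n'(p)=c_n\bigl(p(1-p)\bigr)^{d_n}$: the odd case is immediate from the tail-derivative identity, but for even $n$ the expression collapses to this form only after adding in the derivative of $\Pr_p(X_n=n/2)$ and checking that the factors linear in $p$ cancel; doing this carefully is also what lets one pin down $G_n'(\tfrac12)$ and recover \cref{eq:Fprime0} exactly.
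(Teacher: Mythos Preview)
Your argument is correct and follows the same overarching strategy as the paper---both proofs establish that $F$ is concave on $[0,1]$, from which the KPP inequality and the formula for $F'(0)$ follow---but the mechanics differ in two places.

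For concavity, the paper differentiates \cref{eq:Fprimeu} once more to obtain $F''(u)$ as a series, bounds the linear factor $(k-1)(1-u)-(n-k)(1+u)$ by $2k-1-n$, and checks directly (splitting into odd and even $n$) that the resulting expression is nonpositive. Your route is more structural: you work componentwise and observe the closed form $G_n'(p)=c_n\bigl(p(1-p)\bigr)^{\lfloor (n-1)/2\rfloor}$, which makes the unimodality of $G_n'$ (hence concavity of $H_n$ on $[0,1]$) immediate once the even-$n$ cancellation is verified. This is arguably cleaner and, together with your monotone-convergence remark, is more careful than the paper about justifying termwise differentiation when the $p_n$ have infinite support.

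For the final claims, the paper takes a shorter path: it computes $F'(1)=\sum_n p_n\eta_{n,n}n=p_1+p_2<1$, and then concavity with $F(0)=0$, $F(1)=1$, $F'(1)<1$ simultaneously forces $F'(0)>1$ and rules out interior fixed points, giving $\Xi_F=1$ in one stroke. Your version---computing the ratios $a_{n+1}/a_n$ to show $a_n\ge\tfrac32$ for $n\ge3$, and then using the three-point concavity rigidity for $H_{n_0}$---reaches the same conclusions but with more bookkeeping; you might note the $F'(1)$ shortcut as an alternative.
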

\begin{rem}
We note that by Sterling's formula, the term in parentheses in \cref{eq:Fprime0} is asymptotic to $\sqrt{2n/\pi}$ for large $n$.
\end{rem}

Theorems~\ref{thm:convergence} and~\ref{thm:dichotomy}(\ref{enu:super}) show that if $\gamma>\Upsilon_F$, then $u(t,x)\to 0$ as $t\to+\infty$ 
for each~$x\in\Rm$ fixed.
In that case, one may ask how rapidly the region where $u(t,x)\approx 0$ spreads. %
To this end, for $f,\gamma>1$, define
\begin{equation}
    \Sigma(f,\gamma)\coloneqq \sup_{\omega\in (0,1]} \frac{1-f \gamma^{-\omega}}{\omega}.
    \label{eq:Sigmadef}
\end{equation}
The next theorem says that, for speeds $\underline{c}_*$ and $\overline{c}_*$ defined in \cref{eq:cdefs} below, $U(t,x)\ll 1$ in the regime $x\ll \exp(\underline c_*t)$ %
while~$U(t,x)$ is bounded below in the regime
$x\gtrsim\exp(ct)$ for any~$c>\overline c_*$.
\begin{thm}\label{thm:speed}
    Let $F$ be an odd function, $F'(0)>1$, and $\zeta >0$, and define \begin{equation}\underline c_*\coloneqq \zeta\Sigma(\Upsilon_F,\gamma)\qquad\text{and}\qquad\overline c_*\coloneqq \zeta\Sigma(F'(0),\gamma).\label{eq:cdefs}\end{equation}
    \begin{enumerate}
        \item \label{enu:speedsuper}If $\Sigma(\Upsilon_F,\gamma)> 0$, then there is a $\overline{v}\in\mathcal{C}([0,\infty))$ such that $\overline v(0)=0$ and 
        \begin{equation}u(t,x)\le \overline v(x\e^{-c_*t})\qquad\text{for all $t\ge1$ and $x\ge 0$}.\label{eq:speed-super}\end{equation}
        \item \label{enu:speedsub} If $\nu\ge 0$ and $\nu > \zeta\Sigma(F'(0),\gamma)$, then there is a $\underline{v}\in \clC([0,\infty))$ such that $\lim\limits_{x\to\infty} \underline v(x)=\Xi_F$ and
            \begin{equation}
                u(t,x)\ge\underline v(x\e^{-\nu t})\qquad\text{for all $t,x\ge0$.}\label{eq:speed-sub}
            \end{equation}
    \end{enumerate}
\end{thm}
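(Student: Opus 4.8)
## Proof proposal

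The plan is to prove both parts by comparing $u$ with explicit self-similar sub- and super-solutions of linearizations of \cref{eq:uPDE}. The multiplicative nonlocality is tailored to monomial profiles: for $w(t,x)=x^{\omega}\e^{-\mu t}$ one has $w(t,\gamma^{-1}x)=\gamma^{-\omega}w(t,x)$ and $\tfrac12\Delta w=\tfrac12\omega(\omega-1)x^{-2}w$, so for the linear equation $\partial_{t}w=\tfrac12\Delta w+\zeta\bigl(f\,w(\gamma^{-1}x)-w\bigr)$,
\[
\partial_{t}w-\tfrac12\Delta w-\zeta\bigl(f\,w(\gamma^{-1}x)-w\bigr)=\Bigl(-\mu+\zeta\bigl(1-f\gamma^{-\omega}\bigr)-\tfrac12\omega(\omega-1)x^{-2}\Bigr)w .
\]
Taking $\mu=\mu_{\omega}\coloneqq\zeta(1-f\gamma^{-\omega})$ leaves only $-\tfrac12\omega(\omega-1)x^{-2}w$, which is $\ge0$ exactly for $\omega\in(0,1]$; in that range $x^{\omega}\e^{-\mu_{\omega}t}$ is a super-solution, in the moving frame $\xi=x\e^{-(\mu_{\omega}/\omega)t}$ it is the fixed profile $\xi^{\omega}$, and $\sup_{\omega\in(0,1]}\mu_{\omega}/\omega=\zeta\Sigma(f,\gamma)$. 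I would use two inputs: $0\le u\le1$ (from the probabilistic interpretation, or a maximum principle for \cref{eq:uPDE}), and, by parabolic regularity together with $u(t,0)=0$, the bound $u(1,x)\le\min(1,Cx)\le C'x^{\omega}$ for every $\omega\in(0,1]$. Comparison for \cref{eq:uPDE} and its linearizations is applied on boxes $[t_{0},T]\times[0,R]$ with $T,R\to\infty$ (the barriers are unbounded); to run the first-contact argument without assuming $F$ monotone, replace $F$, wherever the full nonlinearity is used, by its largest nondecreasing minorant $\underline F$ on $[0,\Xi_{F}]$, which still satisfies $\underline F(0)=0$, $\underline F>\mathrm{id}$ on $(0,\Xi_{F})$, and the same linearization at $0$.

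For part~(\ref{enu:speedsuper}): since $0\le u\le1$ and $F(v)\le\Upsilon_{F}v$ on $[0,1]$, $u$ is a sub-solution of the linear equation with $f=\Upsilon_{F}$. Let $\omega^{*}\in(0,1]$ maximize $\omega\mapsto\mu_{\omega}/\omega$ (attained, as this tends to $-\infty$ when $\omega\downarrow0$ because $\Upsilon_{F}\ge F'(0)>1$), so $\mu_{\omega^{*}}/\omega^{*}=\underline c_{*}$ and $\mu_{\omega^{*}}=\omega^{*}\underline c_{*}>0$ by the hypothesis $\Sigma(\Upsilon_{F},\gamma)>0$. For $A$ large enough, $\overline w\coloneqq A x^{\omega^{*}}\e^{-\mu_{\omega^{*}}t}$ is a super-solution with $\overline w(1,\cdot)\ge C'x^{\omega^{*}}\ge u(1,\cdot)$ and $\overline w(t,0)=0=u(t,0)$, so comparison gives $u(t,x)\le A(x\e^{-\underline c_{*}t})^{\omega^{*}}$ for $t\ge1$; with $u\le1$ this is \cref{eq:speed-super} for $\overline v(\xi)\coloneqq\min(1,A\xi^{\omega^{*}})$.

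For part~(\ref{enu:speedsub}): fix $\nu>\overline c_{*}$. For each $\eta>0$ I would construct a continuous self-similar sub-solution $\underline w=\phi_{\eta}(x\e^{-\nu t})$ of \cref{eq:uPDE} with $\phi_{\eta}(0)=0$, $0\le\phi_{\eta}\le\Xi_{F}-\eta$ and $\phi_{\eta}(\xi)\to\Xi_{F}-\eta$ as $\xi\to\infty$; comparing from $t=0$ (where $u(0,\cdot)\equiv1\ge\phi_{\eta}$) and along $x=0$ gives $u(t,x)\ge\phi_{\eta}(x\e^{-\nu t})$ for all $t,x\ge0$, and a continuous envelope over $\eta\downarrow0$ of the $\phi_{\eta}$ produces $\underline v$ with $\lim_{\xi\to\infty}\underline v(\xi)=\Xi_{F}$, establishing \cref{eq:speed-sub}. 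The profile $\phi_{\eta}$ is glued (with matching first derivatives, then mollified) from three regimes: \emph{(i)} near $\xi=0$, exponentially flat, e.g.\ $c_{0}\e^{-1/\xi}$, so all derivatives vanish to all orders—this dominates the Dirichlet boundary layer of $u$, makes the diffusion term harmless, and lets $-\nu\xi\phi_{\eta}'$ beat $+\zeta\phi_{\eta}$ once $\xi<\nu/\zeta$; \emph{(ii)} in a middle range, $\phi_{\eta}(\xi)=a\xi^{\beta}$ with $\beta\in(0,1]$ chosen so $\zeta(1-(F'(0)-\eps)\gamma^{-\beta})/\beta<\nu$ for a small $\eps$, which is possible since $\sup_{\beta\in(0,1]}\zeta(1-(F'(0)-\eps)\gamma^{-\beta})/\beta=\zeta\Sigma(F'(0)-\eps,\gamma)\to\overline c_{*}<\nu$; using $\underline F(v)\ge(F'(0)-\eps)v$ for small $v$, the monomial computation gives a sub-solution up to the correction $\tfrac12\beta(1-\beta)x^{-2}\underline w$, which is the wrong sign only for $x\lesssim1$, handled by widening zone (i) out to $\xi\asymp1$ or by running the comparison on $\{x\ge\eps_{1}\}$ and bounding $u$ below on the strip $\{x\le\eps_{1}\}$ by a Hopf estimate; \emph{(iii)} for large $\xi$, a slow relaxation to $\Xi_{F}-\eta$, e.g.\ $(\Xi_{F}-\eta)(1-b\xi^{-\beta'})$: here $\Xi_{F}-\eta$ is a strict sub-solution of \cref{eq:uPDE} since $F(\Xi_{F}-\eta)>\Xi_{F}-\eta$, and $\underline w$ is a sub-solution provided $\phi_{\eta}$ increases by at most $\min_{[\delta,\Xi_{F}-\eta]}(\underline F-\mathrm{id})>0$ across each factor-$\gamma$ step in $\xi$, which requires only a bounded multiplicative range.

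The routine parts are part~(\ref{enu:speedsuper}) and the envelope/bookkeeping. The main obstacle is showing that the glued profile $\phi_{\eta}$ in part~(\ref{enu:speedsub}) is genuinely a sub-solution everywhere, which is where the nonlocality stops being a harmless perturbation: the reaction samples $\underline w$ at $\gamma^{-1}x$, a point lying \emph{behind} the moving front where $\underline w$ is strictly smaller, so $\phi_{\eta}$ may rise only by a controlled amount over each factor-$\gamma$ window and may have no concave kinks (which would spoil the diffusion inequality), while the monomial exponent must simultaneously be calibrated so that the attainable speed equals $\zeta\Sigma(F'(0),\gamma)$ as $\eps\downarrow0$—the reason it is taken in $(0,1]$, the same range making $x^{\omega}$ superharmonic in part~(\ref{enu:speedsuper}). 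The transition between the monomial and saturation regimes, together with the behaviour near $x=0$ for small $t$ where the diffusion correction in (ii) changes sign, is the delicate point; a convenient way to organize it is to first obtain, from a compactly-supported-in-$\xi$ bump sub-solution, that $u\ge\delta$ on $\{x\ge M\e^{\nu_{0}t}\}$ for a fixed small $\delta>0$ and some $\nu_{0}\in(\overline c_{*},\nu)$, and then bootstrap $\delta\uparrow\Xi_{F}-\eta$ using only that constants in $(0,\Xi_{F})$ are strict sub-solutions, paying an arbitrarily small speed increase for the upgrade.
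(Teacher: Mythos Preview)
Your treatment of part~(\ref{enu:speedsuper}) is correct and essentially identical to the paper's: both use the monomial supersolution $\xi x^{\omega}\e^{-\mu_{\omega}t}$ (concave in $x$ for $\omega\in(0,1]$, hence superharmonic), pick $\omega$ to achieve the supremum in \cref{eq:Sigmadef}, and invoke comparison started at $t=1$ with the initial ordering supplied by parabolic regularity.

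For part~(\ref{enu:speedsub}) your three-regime gluing has a genuine gap, which you correctly flag as ``the delicate point'' but do not resolve. The core obstruction is that the monomial $\xi^{\beta}$ with $\beta<1$ is \emph{concave}, so gluing any linear or convex piece near $\xi=0$ to it forces a \emph{downward} kink: if $c\xi$ and $a\xi^{\beta}$ meet at $\xi_{1}$ with equal values, the linear slope $c=a\xi_{1}^{\beta-1}$ strictly exceeds the monomial slope $a\beta\xi_{1}^{\beta-1}$, violating the convex-kink condition needed for a subsolution. Your $\e^{-1/\xi}$ profile does not bridge this, since it is convex only for $\xi<1/2$ while the monomial's Laplacian correction $\tfrac12\beta(\beta-1)x^{-2}$ requires $x$ bounded below. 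At the other end, regime~(ii) rests on the linearization $F(v)\ge(F'(0)-\eps)v$, valid only for small $v$, so it cannot carry $\phi_{\eta}$ up to values near $\Xi_{F}$; attaching regime~(iii) at a small level $\delta$ forces the step-per-$\gamma$ increment there to be at most $F(\delta)-\delta\asymp(F'(0)-1)\delta$, and matching slopes at the junction then imposes $\log\gamma\lesssim F'(0)-1$, which need not hold. Your fallbacks---comparison on $\{x\ge\eps_{1}\}$ or a bootstrap via constant subsolutions---both founder on the nonlocality (the equation at $x\in[\eps_{1},\gamma\eps_{1}]$ samples $u$ at $\gamma^{-1}x<\eps_{1}$) and on the Dirichlet condition at $0$.

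The paper avoids all of this with two devices you are missing. First, it replaces $F$ by a $\clC^{2}$ minorant $H\le F$ that is \emph{exactly} linear ($H(u)=fu$) near $0$ and convex near $\Xi_{F}$, and defines the profile for large $x$ by the recursion $w(x)=H(w(Bx))$ with $B<\gamma^{-1}$; this makes $F(w(\gamma^{-1}x))-w(x)\ge H(w(Bx))-w(x)=0$ automatic for large $x$ and lets the convexity of $H$ near $\Xi_{F}$ control $w''$ there. Second---this is the crux---the transition from linear near $0$ to the needed power $x^{\omega_{0}}$ (where $fB^{\omega_{0}}=1$) is produced by an inductive construction (\cref{lem:build-sub-inductive}): starting from $v_{1}(x)=x$, one repeatedly appends a monomial tail of slightly smaller exponent and rescales, descending through exponents $1>\omega_{1}>\omega_{2}>\cdots$ while accumulating only \emph{upward} kinks. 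Finally, the Laplacian is absorbed not by forcing convexity of the profile but by a global dilation $\underline v(t,x)=w(\xi\e^{-\nu t}x)$ with $\xi$ small: the non-Laplacian part of the subsolution inequality is \emph{strictly} positive (the strictness coming from $H<F$ and $B<\gamma^{-1}$), hence bounded below on any compact $y$-interval, so the extra factor $\xi^{2}$ in front of $w''$ renders the diffusion term harmless there, while near $0$ linearity kills it and far out a recursive estimate on $w''/w'$ does.
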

\begin{rem}
    If \cref{eq:KPP} holds so that $\Sigma(\Upsilon_F,\gamma)=\Sigma(F'(0),\gamma)>0$, then the rates in \cref{eq:speed-super,eq:speed-sub} match,
    so $\underline c_*=\overline c_*\eqqcolon c_*$. In this case, we may conjecture that $u(t,e^{c_*t}\cdot)$ 
    converges to a nontrivial limiting function as $t\to +\infty$. We should also mention that  the definition 
    of~$\Sigma(F'(0),\gamma)$ is reminiscent of the Freidlin--G\"artner formula~\cite{FG79} for the speed of propagation of the solutions
    to the Fisher-KPP type equations in the local case.
\end{rem}
\begin{rem}
    If $\gamma < F'(0)$, then $\Sigma(\Upsilon_F,\gamma)<\Sigma(F'(0),\gamma) <0$ and \cref{eq:speed-sub} (with $\nu = 0$) is simply a restatement of \cref{thm:dichotomy}(\ref{enu:sub}).
\end{rem}

\begin{rem}The exponential in time rates of spreading have been previously observed  in a PDE with a fractional
Laplacian and a Fisher--KPP nonlinearity in~\cite{CCR-per,CR-CRAS,Ca-Roq13,Cou-Roq12}, as well as with 
a non-local diffusion~\cite{HH20}. Interestingly, the situation with fractional diffusions
and bistable type of nonlinearities (i.e.\ those we consider here) is different: the front spreads linearly in time and its width does not grow~\cite{MRS14}.
 Another instance when spreading is super-linear in time is in reaction-diffusion
models of mobility-structured populations~\cite{BMR15,BHR17,CHMTD22,HPS18}.  However, in none of these models is the unstable
state stabilized, regardless of the mobility of the individuals, in contrast to the model considered in this paper.  
\end{rem}

\begin{rem}
    In this note we consider the random threshold model, rather than the ``random outcome model'' (see~\cite{AHR22,OD19}). With the random threshold model, 
    the nonlinearity~$F$ is guaranteed to be monotone increasing, while this is not guaranteed for the random outcome model. In the local
    case $\gamma=1$ studied in previous works, this distinction is less important, because the nonlinearity in the equation is really $v\mapsto \zeta [F(v)-v]$, so if one wants to create a nonlinearity $\zeta[F(v)-v]= G$, one can simply choose $\zeta$ large enough that $\frac1\zeta G(v)+v$ is increasing. However, in the present setting of $\gamma>1$, the nonlinear and linear terms in \cref{eq:uellPDE}/\cref{eq:uPDE-wholeline}/\cref{eq:uPDE} are applied to $u$ evaluated at different locations. In this case, the assumption that $F$ is increasing is important to obtain a comparison principle for the equation (\cref{prop:comparisonprinciple} below). This difficulty has been also observed for branching random walks in~\cite{KRZ23}.
\end{rem}

\begin{rem}
We should mention that the branching model in this paper is somewhat related to the interesting 
two-type models considered in~\cite{Bell-Mall21,Holzer14,Holzer16,Holzer-Scheel14},
where the off-spring may have a different diffusivity from the parent particle. However, in those models only two mobilities are possible and the
equations are local in space, leading to linear-in-time spreading and not changing the stability of the unstable equilibrium. 
\end{rem}

\subsection{Propagation of particle positions}\label{subsec:ppp}

The following probabilistic model, connected to that introduced in~\cite{KRZ23},
is equivalent to the one described above. Consider the same branching Brownian motion process (with starting position $x\in\RR$ and acceleration factor $\gamma>1$). We fix the probabilities $(\eta_{n,k})$ as above. But instead of each particle casting a vote, in this model we associate to each particle a value in $\RR$. The particles that are still living at time $t$ receive a value equal to their position on the line. A parent particle with $n$ children then receives the $k$th largest of its children's values with probability $\eta_{n,k}$. Let~$X_t$ denote the value associated to the original parent particle in this way. A moment's thought indicates that \begin{equation}u(t,x) = 2\Pr^x(X_t > 0)-1,\label{eq:uasprobofX}\end{equation} where $\Pr^x$ denotes the probability measure in which the initial particle starts at $x$. Indeed, the voting scheme described above is the same as the process just described, except that the $\sgn$ function is applied to the living particles' values before the values are propagated up the genealogical tree. Since the propagation scheme only depends on the ordering of the values, it is equivariant under the application of a monotone function.

Now we suppose that fix the initial condition $x=0$. In this case, we note that by translation invariance, the probability that $X_t>y$ is the same as the probability that $X_t >0$ when the starting position was chosen as $x=-y$. By \cref{eq:uasprobofX}, this means that 
\begin{equation}\label{eq:uastranslatedprob}
    u(t,x) = 2\Pr^0(X_t>-x)-1.
\end{equation}
Thus we have the following corollary of \cref{thm:convergence,thm:dichotomy}(\ref{enu:sub}).
\begin{cor}\label{cor:convindistribution}
    Suppose that $F$ is an odd function and $\Xi_F = 1$. Then (under the probability measure $\Pr^0$) the family of random variables $(X_t)_{t\ge 0}$ converges in distribution as $t\to\infty$ to a random variable $X$ with cdf
    \begin{equation}
        \Pr(X\le x) = \frac12(1-U_\infty(-x)).
    \end{equation}
\end{cor}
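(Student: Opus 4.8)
The plan is to read off the distribution function of $X_t$ from \cref{eq:uastranslatedprob} and then pass to the limit using \cref{thm:convergence}. Rearranging \cref{eq:uastranslatedprob} and using $\Pr^0(X_t\le y)=1-\Pr^0(X_t>y)$, we obtain the exact identity
\begin{equation*}
\Pr^0(X_t\le y)=\frac12\bigl(1-u(t,-y)\bigr),\qquad y\in\RR,\ t\ge 0.
\end{equation*}
By \cref{thm:convergence}, the right-hand side converges as $t\to\infty$ to $G(y)\coloneqq\frac12\bigl(1-U_\infty(-y)\bigr)$, for every $y\in\RR$. Hence it suffices to show that $G$ is the distribution function of an honest probability measure: once that is known, pointwise convergence of distribution functions at every point is more than enough to conclude that $X_t$ converges in distribution to a random variable $X$ with $\Pr(X\le x)=G(x)$.

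To check that $G$ is a cdf, I would argue as follows. First, for each $t$ the map $x\mapsto u(t,x)=2\Pr^0(X_t>-x)-1$ is non-decreasing, being the composition of the decreasing map $x\mapsto -x$ with the non-increasing tail function $c\mapsto\Pr^0(X_t>c)$; hence $U_\infty=\lim_{t\to\infty}u(t,\cdot)$ is non-decreasing, and therefore so is $G$. Next, since $F$ is odd, $u(t,\cdot)$ is odd for every $t$, so $U_\infty$ is odd. Because $U_\infty$ is bounded and monotone, the limits $U_\infty(\pm\infty)\coloneqq\lim_{z\to\pm\infty}U_\infty(z)$ exist, and oddness gives $U_\infty(-\infty)=-U_\infty(+\infty)$. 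By \cref{eq:nonlocalellipticlt1} we have $U_\infty(+\infty)\le 1$, while \cref{thm:dichotomy}(\ref{enu:sub}) (applicable when $\gamma<F'(0)$) together with the hypothesis $\Xi_F=1$ gives $U_\infty(+\infty)=\liminf_{z\to+\infty}U_\infty(z)\ge\Xi_F=1$; hence $U_\infty(+\infty)=1$ and $U_\infty(-\infty)=-1$. This yields $\lim_{y\to+\infty}G(y)=\frac12\bigl(1-U_\infty(-\infty)\bigr)=1$ and $\lim_{y\to-\infty}G(y)=\frac12\bigl(1-U_\infty(+\infty)\bigr)=0$. Finally, $G$ is right-continuous, because $U_\infty$ is continuous (e.g.\ by elliptic regularity for \crefrange{eq:nonlocalelliptic}{eq:nonlocalellipticlt1}, extended to the whole line as an odd function); alternatively, one may replace the monotone function $G$ by its right-continuous version, which differs from it at most at countably many points and induces the same law. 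Thus $G$ is a genuine distribution function.

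Putting these together, $X_t$ converges in distribution to a random variable $X$ with cdf $G$, which is the claim. I expect the only substantive point to be the behavior of $U_\infty$ at $\pm\infty$: the family $(X_t)_{t\ge 0}$ is tight exactly when $U_\infty(z)\to\pm 1$ as $z\to\pm\infty$, i.e.\ when no probability mass escapes to infinity, and this is precisely where the assumption $\Xi_F=1$ is used, together with the lower bound of \cref{thm:dichotomy}(\ref{enu:sub}) (and hence the implicit requirement $\gamma<F'(0)$). Some such non-triviality hypothesis is unavoidable: if instead $\gamma>\Upsilon_F$, then \cref{thm:dichotomy}(\ref{enu:super}) gives $U_\infty\equiv 0$, so $G\equiv\frac12$ is not a distribution function and the random variables $X_t$ spread out to $\pm\infty$ rather than converging. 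Everything else here — the monotonicity and oddness bookkeeping and the elementary characterization of weak convergence through distribution functions — is routine.
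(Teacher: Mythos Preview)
Your argument is correct and matches the paper's intended reasoning: the paper does not give a separate proof but simply declares the result a corollary of \cref{thm:convergence} and \cref{thm:dichotomy}(\ref{enu:sub}), which is exactly the combination you invoke. Your observation that the hypothesis $\gamma<F'(0)$ is implicitly needed (via \cref{thm:dichotomy}(\ref{enu:sub})) for $U_\infty(\pm\infty)=\pm 1$, and hence for tightness, is well taken and sharper than the paper's statement of the corollary.
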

\begin{rem}
    One can naturally ask about analogues of \cref{cor:convindistribution} for similar nonlinear functionals of Gaussian fields. This is somewhat in analogy to the behavior of the maximum of branching Brownian motion, results on which have been extended to the maximum of the two-dimensional Gaussian free field and other log-correlated fields. The latter subject is extremely rich, and we do not attempt to survey it here. We refer to~\cite{Bis20} and its many references for an introduction.
\end{rem}

\subsection*{Plan of the paper}
In \cref{sec:localPDE}, we introduce and solve a problem that is similar to that described above but that can be modeled with a local PDE\@. This illustrates much of the phenomenology and ideas in the proofs of our main theorems, but with less technical complexity. In \cref{sec:nonlocal-Cauchy}, we prove \cref{thm:convergence,thm:dichotomy,thm:speed}. In \cref{sec:majvote}, we prove \cref{prop:majorityiskpp}.

\subsection*{Acknowledgments}
AD was partially supported by the NSF Mathematical Sciences Postdoctoral Research Fellowship Program under grant DMS-2002118 at NYU Courant.
LR was supported by NSF grants DMS-1910023 and DMS-2205497 and by~ONR grant N00014-22-1-2174.

\section{The local-PDE version of the problem}\label{sec:localPDE}

In this section, which is logically independent from the rest of the paper, we show the key ideas in the proofs of \cref{thm:convergence,thm:dichotomy} by considering a   
simpler model. It also involves a branching Brownian motion, but rather than particles speeding up every time they branch, they instead diffuse with time-dependent (but generation-independent) generator $\frac12\gamma^{2(\tau+t)}\Delta$. Here, $\tau$ is a fixed constant, analogous to the starting generation label~$\ell$ in the generation-dependent problem. As before, let $p(t,x)$ be the probability that the original particle votes ``$1$,'' and let $u_\tau(t,x)=2p(t,x)-1$. A scaling argument,
analogous to that leading to \cref{eq:rescale}, implies that
\begin{equation}\label{eq:rescale-cts}
u_\tau(t,x) = u_0 (t,\gamma^{-\tau} x).
\end{equation}
Moreover, a renewal analysis similar to that described in the introduction shows that
\begin{equation}\label{dec1302}
\begin{aligned}
    u_0(t,x) &= \exp \Big\{\int_0^t\Big(\frac12\gamma^{2s}\Delta-\zeta\Big)\,\dif s\Big\}\sgn(x)\\
             &\qquad + \zeta\int_0^t \exp\Big\{\int_0^{t-s}\Big(\frac12\gamma^{2r}\Delta-\zeta\Big)\,\dif r\Big\}(F\circ u_{t-s}(s,\cdot))(x)\,\dif s\\
             &=\exp \left\{\frac{\gamma^{2t}-1}{4\log\gamma}\Delta-t\zeta\right\}\sgn(x)\\
             &\qquad + \zeta\int_0^t \exp\left\{\frac{\gamma^{2(t-s)}-1}{4\log\gamma}\Delta-(t-s)\zeta\right\}(F\circ u_0(s,\gamma^{-(t-s)}\cdot))(x)\,\dif s.
\end{aligned}
\end{equation}
Now if we define
\begin{equation}\label{eq:vfromu}v(t,y)=u_0(t,\gamma^{t}y),\end{equation}
then \cref{dec1302} becomes
\begin{align*}
    v(t,y)  
             &=\exp \left\{\frac{\gamma^{2t}-1}{4\log\gamma}\Delta-t\zeta\right\}\sgn(\gamma^{t}y)\\
             &\qquad + \zeta\int_0^t \exp\left\{\frac{\gamma^{2(t-s)}-1}{4\log\gamma}\Delta-(t-s)\zeta\right\}(F\circ v(s,\gamma^{-t}\cdot))(\gamma^t x)\,\dif s\\
             &=\exp \left\{\frac{1-\gamma^{-2t}}{4\log\gamma}\Delta-t\zeta\right\}\sgn(y)\\
             &\qquad + \zeta\int_0^t \exp\left\{\frac{\gamma^{-2s}-\gamma^{-2t}}{4\log\gamma}\Delta-(t-s)\zeta\right\}(F\circ v(s,\cdot))(x)\,\dif s.
\end{align*}
Differentiating, we see that $v$ solves the Cauchy problem
\begin{align}
    \partial_t v(t,y) &= \frac12 \gamma^{-2t} \Delta v(t,y) + \zeta f(v(s,y)),&& t>0,~y\in\RR;\\
    v(0,y) &= \sgn(y),&&y\in\RR,
\end{align}
where we have defined
\[f(v) = F(v)-v.\]
Now we define $b = \log\gamma>0$ and, for $\nu\in\RR$,
\[u^{(\nu)}(t,x) = v(t,\e^{-(b-\nu)t}x),\]
(so in particular $u^{(0)} = u_0\eqqcolon u$), then $u^{(\nu)}$ satisfies  
\begin{align}
    \partial_t u^{(\nu)}(t,x) &= \frac12 \e^{-2\nu t}\Delta u^{(\nu )}(t,x) - (b-\nu ) x \partial_x u^{(\nu )}(t,x) + \zeta f(u^{(\nu )}(t,x)),&& t>0,x\in\RR;\label{eq:ulocalPDE-wholeline}\\
    u^{(\nu )}(0,x) &= \sgn(x),&&x\in\RR,\label{eq:ulocalIC-wholeline}
\end{align}
Assuming, as in the introduction, that $F$ and hence $f$ is an odd function, we see that the solution $u$ will remain an odd function as well, and so we can recast \crefrange{eq:ulocalPDE-wholeline}{eq:ulocalIC-wholeline} as the half-line PDE problem
\begin{align}
    \partial_t u^{(\nu )}(t,x) &= \frac12\e^{-2\nu t} \Delta u^{(\nu )}(t,x) - (b-\nu ) x \partial_x u^{(\nu )}(t,x) + \zeta f(u^{(\nu )}(t,x)),&& t,x>0;\label{eq:ulocalPDE}\\
    u^{(\nu )}(0,x) &= 1,&&x>0;\label{eq:ulocalIC}\\
    u^{(\nu )}(t,0) &= 0,&&t>0.\label{eq:ulocalDirichlet}
\end{align}

The problem \crefrange{eq:ulocalPDE}{eq:ulocalDirichlet} is a (local, unlike \crefrange{eq:uPDE}{eq:uDirichlet}) second-order PDE, and satisfies the comparison principle. When $\nu =0$, the PDE \cref{eq:ulocalPDE} is autonomous. Therefore, applying the comparison principle to $u(\cdot+h,\cdot)-u$, which is initially negative, we see 
that~$t\mapsto u(t,x)$ is decreasing in time, so $U_\infty(x)\coloneqq \lim\limits_{t\to\infty} u(t,x)$ exists. Since $0$ is a (sub)solution to \cref{eq:ulocalPDE}, we know that $U_\infty(x)\ge 0$ for all $x$. Thus, our main question in studying \crefrange{eq:ulocalPDE}{eq:ulocalDirichlet} is whether~$U_\infty\equiv 0$ or if $U_\infty$ is positive for some positive $x$.
We will prove somewhat more general results on $u^{(\nu )}$ for general $\nu $, which, in addition, 
give information about the rates of spreading of $u^{(0)}$. Interesting, the rates in \cref{dec1306,dec1310} below do not take the same form as the rates in \cref{eq:cdefs} for the nonlocal problem.

First we prove the following analogue of \cref{thm:dichotomy}(\ref{enu:super}) and \cref{thm:speed}(\ref{enu:speedsuper}):
\begin{prop}\label{prop:dec1302}
    Define %
    \begin{equation}\label{eq:Rfdef}
        \Upsilon_f \coloneqq \sup_{v\in [0,1]} \frac{f(v)}{v}.
    \end{equation}
    If $\zeta \Upsilon_f <b-\nu $, then
    \begin{equation}\label{dec1304}
        \lim_{t\to\infty} u^{(\nu )}(t,x)=0\qquad\text{for each fixed }x\ge 0.
    \end{equation}
\end{prop}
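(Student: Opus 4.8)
The plan is to linearize the equation using the comparison principle and then solve the resulting linear problem in closed form.

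\emph{Reduction and linearization.} First, the definitions $v(t,y)=u_0(t,\gamma^{t}y)$ and $u^{(\nu)}(t,x)=v(t,\e^{-(b-\nu)t}x)$, together with $\gamma=\e^{b}$, give the exact identity $u^{(\nu)}(t,x)=u_0(t,\e^{\nu t}x)$, so it suffices to control $u_0=u^{(0)}$ at the exponentially moving point $\e^{\nu t}x$. Comparing with the constant sub- and supersolutions $0$ and $1$ (legitimate since $f(0)=f(1)=0$ and the initial data $1$ and boundary value $0$ lie in $[0,1]$) yields $0\le u_0\le1$, and hence $\zeta f(u_0)\le\zeta\Upsilon_f u_0$ pointwise by the definition \cref{eq:Rfdef} of $\Upsilon_f$. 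Thus $u_0$ is a subsolution of the \emph{linear} problem
\[
\partial_t w=\tfrac12\gamma^{2t}\Delta w+\zeta\Upsilon_f w\quad(t,x>0),\qquad w(0,\cdot)=1\ \text{on}\ (0,\infty),\qquad w(t,0)=0,
\]
and the comparison principle for \crefrange{eq:ulocalPDE}{eq:ulocalDirichlet} (noted to hold in the text) gives $u_0\le w$.

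\emph{Explicit solution of the linear problem.} Set $\sigma(t)\coloneqq\int_0^t\gamma^{2s}\,\dif s=\frac{\gamma^{2t}-1}{2\log\gamma}$. A direct computation — this is just a deterministic time change of the half-line heat equation with Dirichlet data at $x=0$, i.e.\ the linear part of the Duhamel formula \cref{dec1302} — shows that
\[
w(t,x)=\e^{\zeta\Upsilon_f t}\operatorname{erf}\!\left(\frac{x}{\sqrt{2\sigma(t)}}\right),\qquad \operatorname{erf}(z)\coloneqq\frac{2}{\sqrt\pi}\int_0^z\e^{-r^2}\,\dif r,
\]
solves this problem: $\sigma(t)\to0$ as $t\to0^+$ gives the initial data for $x>0$, and $\operatorname{erf}(0)=0$ gives the boundary value.

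\emph{Decay and conclusion.} Using $\operatorname{erf}(z)\le\frac{2}{\sqrt\pi}z$ and, for $t$ large enough that $\gamma^{2t}\ge2$, the bound $2\sigma(t)\ge\frac{\gamma^{2t}}{2\log\gamma}$, we obtain $w(t,x)\le C(\gamma)\,x\,\e^{(\zeta\Upsilon_f-b)t}$. Hence, for $x>0$,
\[
u^{(\nu)}(t,x)=u_0(t,\e^{\nu t}x)\le w(t,\e^{\nu t}x)\le C(\gamma)\,x\,\e^{(\zeta\Upsilon_f-b+\nu)t},
\]
which tends to $0$ as $t\to\infty$ exactly because of the hypothesis $\zeta\Upsilon_f<b-\nu$; for $x=0$ the statement is trivial. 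This proves \cref{dec1304}. Everything here is elementary once the structure is set up; the only point requiring a little care — and the one I would regard as the main obstacle — is the linearization step, where one must first confine $u_0$ to $[0,1]$ (so that $f(u_0)\le\Upsilon_f u_0$ is available) and then apply the comparison principle for the local problem \crefrange{eq:ulocalPDE}{eq:ulocalDirichlet} once more to get $u_0\le w$; both uses are covered by the comparison principle already invoked in the text.
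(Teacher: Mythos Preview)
Your route differs from the paper's: instead of exhibiting the explicit supersolution $v_{\delta,\xi}(t,x)=\e^{-\delta t}\xi x$ for \cref{eq:ulocalPDE} and checking it dominates $u^{(\nu)}$ from some positive time, you pass to $u_0$, linearize, and solve in closed form. Your final estimate $u^{(\nu)}(t,x)\le C(\gamma)\,x\,\e^{(\zeta\Upsilon_f-b+\nu)t}$ is correct and even hits the sharp exponential rate, but the justification has a gap.

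The issue is the linear PDE you attribute to $u_0$. You assert that $u_0$ is a subsolution of $\partial_t w=\tfrac12\gamma^{2t}\Delta w+\zeta\Upsilon_f w$; but $u_0=u^{(0)}$ satisfies \cref{eq:ulocalPDE} with $\nu=0$, namely $\partial_t u_0=\tfrac12\Delta u_0-bx\,\partial_x u_0+\zeta f(u_0)$, with \emph{constant} diffusivity and a drift term, not time-dependent diffusivity $\tfrac12\gamma^{2t}$. (Nor does the Duhamel formula \cref{dec1302} give your PDE: its nonlinear term is nonlocal in $x$, being evaluated at $\gamma^{-(t-s)}x$.) The subsolution claim is therefore unjustified, since the missing terms $\tfrac12(1-\gamma^{2t})\Delta u_0-bx\,\partial_x u_0$ have no obvious sign. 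The fix is easy: linearizing the \emph{correct} equation shows $u_0$ is a subsolution of $\partial_t w=\tfrac12\Delta w-bx\,\partial_x w+\zeta\Upsilon_f w$, and a direct computation (using $2b\,\sigma(t)=\gamma^{2t}-1$) confirms that your $w(t,x)=\e^{\zeta\Upsilon_f t}\operatorname{erf}\bigl(x/\sqrt{2\sigma(t)}\bigr)$ solves this equation as well. Equivalently, carry out the linearization at the level of $v$, which \emph{does} have diffusivity $\tfrac12\gamma^{-2t}$, and then undo the change of variables \cref{eq:vfromu}. With that correction the comparison $u_0\le w$ holds and the rest of your argument goes through unchanged.
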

\cref{prop:dec1302} implies that for any 
\begin{equation}\label{dec1306}
\nu<\underline c:=b-\zeta\Upsilon_f,
\end{equation}
and $x>0$ fixed we have 
\[
u_0(t,e^{\nu t}x)=v(t,e^{-(b-\nu) t}x)=u^{(\nu)}(t,x)\to 0,~~\hbox{as $t\to+\infty$.}
\]
Thus, the state $u=0$ invades the positive half-line, roughly speaking, at least at the exponential rate $\exp(\underline c t)$. 
\begin{proof}
 Let $\delta\in(0,b-\nu-\zeta\Upsilon_f)$. %
 For fixed $\xi>0$, let $v_{\delta,\xi}(t,x) = \e^{-\delta t}\xi x$. We note that
 \begin{equation}
\begin{aligned}
  \frac12 \e^{-2\nu t}\Delta v_{\delta,\xi}(t,x)& - (b-\nu )x\partial_xv_{\delta,\xi}(x) + \zeta f(v_{\delta,\xi}(t,x)) \le (\zeta \Upsilon_f -b+\nu ) v_{\delta,\xi}(t,x)\\
&\le    -\delta v_{\delta,\xi}(t,x)=\partial_t v_{\delta,\xi}(t,x) ,
\end{aligned}
\end{equation}
by our choice of $\delta$. We conclude that $v_{\delta,\xi}$ is a supersolution to \crefrange{eq:ulocalPDE}{eq:ulocalDirichlet}. Since $u$ is smooth at positive times (being a solution to the whole-line problem \crefrange{eq:ulocalPDE-wholeline}{eq:ulocalIC-wholeline}), for any $t>0$, there is some $\xi$ such that $u^{(\nu )}(t,x)\le v_{\delta,\xi}(t,x)$ for all $x\ge 0$. The comparison principle then implies \cref{dec1304}. %
\end{proof}

Now we prove the following analogue of \cref{thm:dichotomy}(\ref{enu:sub}) and \cref{thm:speed}(\ref{enu:speedsub}):
\begin{prop}\label{prop:dec1304}
    If $\nu \ge 0$ and $b-\nu <\zeta f'(0)$, then there is a function $v=v(x)$ on $(0,\infty)$ such that 
    \begin{equation}\label{dec1316}
    \lim\limits_{x\to +\infty} v(x)= \Xi_F,
    \end{equation}
   and $u^{(\nu )}(t,x)\ge v(x)$ for each $t,x\ge 0$.
\end{prop}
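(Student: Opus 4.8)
\emph{The plan} is to produce a single time-independent function $v\in C([0,\infty))\cap C^2((0,\infty))$ with $v(0)=0$, $0\le v<\Xi_F\le1$, and $v(x)\to\Xi_F$ as $x\to\infty$, which is a subsolution of \crefrange{eq:ulocalPDE}{eq:ulocalDirichlet}; then, since $v(0)=0=u^{(\nu)}(t,0)$ and $v\le1=u^{(\nu)}(0,\cdot)$, the comparison principle gives $u^{(\nu)}(t,x)\ge v(x)$ for all $t,x\ge0$. Because $\nu\ge0$, the diffusivity $a:=\e^{-2\nu t}$ runs over $(0,1]$, so a stationary $C^2$ function $v$ is such a subsolution as soon as
\[\tfrac a2 v''(x)-(b-\nu)xv'(x)+\zeta f(v(x))\ge0\qquad\text{for every }a\in(0,1]\text{ and }x>0,\]
and where $v''\ge0$ this reduces to $-(b-\nu)xv'+\zeta f(v)\ge0$. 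The case $b-\nu\le0$ is easier — the drift term then already has the useful sign for increasing $v$ — and is handled by a simpler version of the construction below: for each $\epsilon\in(0,\Xi_F)$, the solution of $v''=-2\zeta f(v)$ with $v(0)=0$, $v'(0)=2\sqrt{\zeta\int_0^{\Xi_F-\epsilon}f(w)\,\dif w}$, extended by the constant $\Xi_F-\epsilon$ from the (finite) point where it first attains that value with zero slope, is a subsolution, and one lets $\epsilon\downarrow0$. So assume from now on $0<b-\nu<\zeta f'(0)$; this forces $f'(0)>0$, hence $\Xi_F>0$ and the statement is nontrivial.

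\emph{The subsolution.} Fix $\theta\in\bigl(\tfrac{b-\nu}{\zeta f'(0)},1\bigr)$, a nonempty interval precisely because $b-\nu<\zeta f'(0)$, and let $\psi$ be an increasing solution on $(0,\infty)$ of the scale-invariant first-order ODE $(b-\nu)x\psi'=\theta\zeta f(\psi)$. Using $f(0)=0$, $f>0$ on $(0,\Xi_F)$, $f(\Xi_F)=0$, and $f(\psi)\approx f'(0)\psi$ near $\psi=0$, one finds $\psi(x)\sim Cx^{p}$ as $x\to0^+$ with $p=\theta\zeta f'(0)/(b-\nu)>1$ (so $\psi(0^+)=0$), the solution remains in $(0,\Xi_F)$ and hence is global, and $\psi(x)\to\Xi_F$ as $x\to\infty$ because $\int^{\Xi_F}\dif w/f(w)=\infty$. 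I will take $v(x)=\psi(\lambda x)$ for a constant $\lambda>0$ fixed small below; by scale invariance $v$ satisfies the same ODE, so $-(b-\nu)xv'+\zeta f(v)=(1-\theta)\zeta f(v)\ge0$, and differentiating,
\[v''=\frac{\theta\zeta f(v)}{(b-\nu)x^2}\Bigl(\frac{\theta\zeta f'(v)}{b-\nu}-1\Bigr),\qquad\text{so}\qquad |v''|\le\frac{\theta\zeta K}{(b-\nu)x^2}\,f(v),\quad K:=\tfrac{\zeta\|f'\|_{L^\infty[0,\Xi_F]}}{b-\nu}+1.\]

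\emph{Verification.} Where $v''\ge0$ the inequality is immediate, $\tfrac a2 v''-(b-\nu)xv'+\zeta f(v)\ge(1-\theta)\zeta f(v)\ge0$; this covers a neighborhood of $x=0$, because there $f'(v)\approx f'(0)>\tfrac{b-\nu}{\theta\zeta}$ forces $v''>0$ — exactly the place where a merely linear ramp would fail, the diffusivity $a$ being possibly tiny. Where $v''<0$ one has $f'(v)<\tfrac{b-\nu}{\theta\zeta}$; continuity of $f'$ together with $f'(0)>\tfrac{b-\nu}{\theta\zeta}$ then forces $v\ge v_{\mathrm{crit}}$ for some fixed $v_{\mathrm{crit}}>0$, hence $\lambda x\ge y_{\mathrm{crit}}:=\psi^{-1}(v_{\mathrm{crit}})>0$, i.e.\ $x\ge y_{\mathrm{crit}}/\lambda$. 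On that set, using $a\le1$ and $v''<0$,
\[\tfrac a2 v''-(b-\nu)xv'+\zeta f(v)\ \ge\ -\tfrac12|v''|+(1-\theta)\zeta f(v)\ \ge\ \zeta f(v)\Bigl(1-\theta-\frac{K\lambda^2}{2(b-\nu)\,y_{\mathrm{crit}}^2}\Bigr),\]
which is $\ge0$ as soon as $\lambda$ is small enough. Thus $v$ is a $C^2$ stationary subsolution of \crefrange{eq:ulocalPDE}{eq:ulocalDirichlet} for every $a\in(0,1]$, with $v(0)=0$, $v<\Xi_F\le1$, and $v(x)\to\Xi_F$, and comparison finishes the proof.

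\emph{Main obstacle.} The delicate point is the tension created by the degeneracy of the diffusion: the subsolution has to be \emph{convex} near the origin — otherwise, for the small values of $a=\e^{-2\nu t}$, the $\tfrac a2 v''$ term is unavailable to help the reaction overcome the drift $-(b-\nu)xv'$ — while it must become \emph{concave} as it levels off at $\Xi_F$, where $f(v)\to0$ leaves no reaction slack. The choice of $\theta$ produces the convexity near $0$ (this is the only place the hypothesis $b-\nu<\zeta f'(0)$ enters), and the scale invariance of the first-order ODE lets one then send $\lambda\to0$ to push the bounded concave part of the profile out to large $x$, where $\tfrac12|v''|$ is negligible against $(1-\theta)\zeta f(v)$ however small $f(v)$ has become.
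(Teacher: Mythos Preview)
Your proof is correct and is essentially the paper's own argument, repackaged. The paper takes $w$ solving $w'(y)=\tfrac{1}{By}f(w(y))$ with $B\in((b-\nu)/\zeta,\,f'(0))$ and sets $v(x)=w(\eta x)$ for small $\eta$; your $\psi$ solving $(b-\nu)x\psi'=\theta\zeta f(\psi)$ with $\theta\in\bigl(\tfrac{b-\nu}{\zeta f'(0)},1\bigr)$ and $v(x)=\psi(\lambda x)$ is the same construction under the reparametrization $B=\tfrac{b-\nu}{\theta\zeta}$, and the two parameter ranges coincide. Your verification (splitting into $v''\ge0$ near $0$ and $v''<0$ only for $\lambda x\ge y_{\mathrm{crit}}$) is just a case-by-case reading of the paper's uniform estimate $\inf_{z>0}\tfrac{f'(w(z))-B}{2B^2z^2}>-\infty$. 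The only cosmetic difference is that you carve out $b-\nu\le0$ separately; the paper's argument already covers that case uniformly, since one may still pick $B\in(0,f'(0))$ and the drift contribution $-\tfrac{b-\nu}{B}+\zeta$ is then even more positive.
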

\cref{prop:dec1304} implies that for any 
\begin{equation}\label{dec1310}
\nu>\overline c:=b-\zeta f'(0),
\end{equation}
and $x>0$ fixed we have 
\[
u_0(t,e^{\nu t}x)=v(t,e^{-(b-\nu) t}x)=u^{(\nu)}(t,x)\ge v(x). 
\]
Thus, the state $u=0$ invades the positive half-line, roughly speaking, not faster than at the exponential rate $\exp(\overline c t)$. Note that 
$\underline c=\overline c$ in the Fisher-KPP case when $\Upsilon_f=f'(0)$.

\begin{proof}
    Fix $B>0$, to be chosen later, and let $W$ be a solution to the ODE 
    \begin{equation}
        W'(Y) = \frac1B f(W(Y)),\qquad Y\in\RR.
    \end{equation}
    with $W(0) \in (0,\Xi_F)$.
    This means in particular that $\lim\limits_{Y\to-\infty} W(Y) = 0$ and $\lim\limits_{Y\to+\infty} W(Y) = \Xi_F$.
    Now if we define
    \[
        w(y) = W(\log y),
    \]
    then  
    \begin{equation}
        w'(y) = \frac1y W'(\log y) = \frac1{By}f(w(y)),\label{eq:wderiv}
    \end{equation}
 and we also have
 \begin{equation}\label{dec1312}
 \lim_{y\searrow 0}w(y)=0\qquad\text{and}\qquad\lim_{y\to+\infty}w(y)=\Xi_F.
\end{equation}
    We would like to use $w$ to build a subsolution to \cref{eq:ulocalPDE}. The identity \cref{eq:wderiv} will be sufficient to understand the second and third terms on the right side. To deal with the Laplacian, we need to understand the second derivative of $w$. So we differentiate \cref{eq:wderiv}, and then use~\cref{eq:wderiv} again, to obtain
    \begin{equation}
        w''(y) = \frac{f'(w(y))w'(y)}{By}-\frac1{By^2}f(w(y))
        = (f'(w(y))-B)\frac{f(w(y))}{B^2y^2}.\label{eq:wsecondderiv}
    \end{equation}
    We define the subsolution  to be $v(x) = w(\eta x)$, for some $\eta>0$ to be determined. By~\cref{eq:wderiv,eq:wsecondderiv}, we have, for any $t\ge 0$,
    \begin{multline}
        \frac12e^{-2\nu t} v''(x) - (b-\nu )x v'(x) + \zeta f(v(x)) = \frac12 \eta^2\e^{-2\nu t} w''(z) - (b-\nu )z w'(z) + \zeta f(w(z))\\
                                                              = \Big(\eta^2\e^{-2\nu t}\cdot\frac{f'(w(z))-B}{2B^2z^2} - \frac {b-\nu }B +  \zeta\Big) f(w(z)),\label{eq:computeforv}
\end{multline}
    where we have defined $z=\eta x$.
    Now we choose $B\in ((b-\nu )/\zeta,f'(0))$, which means that 
    \[
    -\frac{b-\nu }B+\zeta >0\qquad\text{ and }\qquad\inf\limits_{z>0}\frac{f'(w(z))-B}{2B^2z^2}>-\infty.
    \]
    The second inequality above holds since $\lim\limits_{z\searrow 0}w(z)=0$ by \cref{dec1312} and $f'(0)-B>0$. %
    Thus, we can choose $\eta$ sufficiently small that (using the assumption that $\nu \ge 0$)
    \[
    \Big(\eta^2\e^{-2\nu t}\cdot\frac{f'(w(z))-B}{2B^2z^2} - \frac {b+h}B +  \zeta\Big) f(w(z))>0\qquad\text{for all }t,z>0,
    \]
    which means that
    \[\frac12\e^{-2\nu t} v''(x) - (b-\nu )x v'(x) + \zeta f(v(x)) \ge 0\qquad\text{for all }t,x> 0\] by \cref{eq:computeforv}.
    By the comparison principle, we therefore have that $u^{(\nu )}(t,x)\ge v(x)$ for all~$t,x\ge 0$. Moreover, the limiting behavior
    as $x\to+\infty$ in \cref{dec1316} holds for $v(x)$ because of~(\ref{dec1312}). 
\end{proof}

\section{The nonlocal Cauchy problem}\label{sec:nonlocal-Cauchy}

In this section we prove our main results. We first discuss the comparison principle for the non-local problem in \cref{sec:compar}.
It is used in \cref{sec:monot} to prove \cref{thm:convergence} on the convergence of the solution to a steady state. Part~\ref{enu:super} of \cref{thm:dichotomy}
on the triviality of the steady states when the mobility growth rate is sufficiently large is proved in \cref{sec:super}. We also prove
there \cref{thm:speed}(\ref{enu:speedsuper}), the lower bound on the exponential spreading rate of the state $u\approx 0$. The \ordinalstringnum{\getrefnumber{enu:sub}} parts of \cref{thm:dichotomy,thm:speed} are proved in \cref{sec:sub}. \cref{sec:analyt}
contains the analytic constructions used in the aforementioned proofs.

We will study $u(t,x)$ through the nonlocal Cauchy problem \crefrange{eq:uPDE}{eq:uIC} on the half-line. In order to clarify our analysis, 
we summarize in the following proposition the facts about $F$ that we will use.
\begin{prop}
    The function $F(v)$ is increasing, $\clC^2$ on $[0,1]$, and satisfies $F(v)>v$ for all~$v\in (0,\Xi_F)$.
\end{prop}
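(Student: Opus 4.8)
The plan is to verify the three asserted properties of $F$ in turn. Two of them are essentially immediate: the inequality $F(v)>v$ on $(0,\Xi_F)$ is a restatement of the definition \cref{dec1204}, and monotonicity is the stochastic monotonicity of the binomial. The only point that takes any care is the $\clC^2$ regularity, because the sum defining $F$ in \cref{eq:Fdef} may have infinitely many terms.

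For \emph{monotonicity}, I would record the elementary identity
\[
\frac{\dif}{\dif p}\,\Pr_p(X_n\ge k) \;=\; n\binom{n-1}{k-1}p^{k-1}(1-p)^{n-k}\;\ge\;0,
\]
obtained by telescoping $\sum_{j=k}^{n}\binom nj p^j(1-p)^{n-j}$ (equivalently, couple $X_n=\sum_{i=1}^n\mathbf 1[U_i\le p]$ with i.i.d.\ uniform $U_i$, so that $\Bin(n,p)$ is stochastically increasing in $p$). Since $u\mapsto(1+u)/2$ is increasing and the coefficients $2p_n\eta_{n,k}$ in the first expression of \cref{eq:Fdef} are nonnegative, differentiating term by term — legitimate by the uniform convergence established next — gives $F'(v)\ge0$ for $v\in[0,1]$, so $F$ is nondecreasing.

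For \emph{$\clC^2$ regularity}, I would write $F(u)+1=\sum_{n\ge1}p_n g_n(u)$ with $g_n(u)=2\sum_{k=1}^n\eta_{n,k}\,\Pr_{(1+u)/2}(X_n\ge k)$, a polynomial in $u$ satisfying $0\le g_n\le 2$ on $[0,1]$. The identity above, differentiated once more and bounded crudely (using $\Pr(\cdot)\le1$ and $\sum_k\eta_{n,k}\le1$), yields $\lVert g_n'\rVert_{\clC([0,1])}\le n$ and $\lVert g_n''\rVert_{\clC([0,1])}\le n^2$. Hence, under the standing assumption that the offspring distribution has finite second moment, $\sum_{n\ge1}n^2p_n<\infty$ (a hypothesis already needed for quantities such as $F'(0)$ to be finite — compare the $\sqrt n$ growth in the remark after \cref{prop:majorityiskpp}), the series $\sum_n p_ng_n$ and its first two term-by-term derivatives converge uniformly on $[0,1]$ by the Weierstrass $M$-test, so $F\in\clC^2([0,1])$ with the expected series for $F'$ and $F''$. (If only finitely many $p_n$ are nonzero this is trivial, $F$ being a polynomial.)

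Finally, for \emph{$F(v)>v$ on $(0,\Xi_F)$}: if $v\in(0,\Xi_F)$ then $v$ is strictly smaller than every element of $\{w>0\mid F(w)\le w\}$, hence $v$ does not belong to that set, and since $v>0$ this forces $F(v)>v$; if $\Xi_F=0$ the assertion is vacuous. (The set is nonempty because $F(1)=1$ by \cref{dec1202}, so $\Xi_F\in[0,1]$ is well-defined.) The only genuine obstacle in the whole argument is thus the convergence of the series for $F$ and its derivatives, which is exactly the place where a moment hypothesis on $(p_n)$ enters; everything else is bookkeeping.
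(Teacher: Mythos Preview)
The paper states this proposition without proof (it is presented as a summary of facts about $F$ to be used later), so there is no argument to compare against directly. Your proof is correct and fills in what the paper leaves implicit: monotonicity via the standard derivative identity for the binomial tail (which the paper itself computes later, in \cref{eq:Fprimeu}, with $(n-k+1)\binom{n}{k-1}=n\binom{n-1}{k-1}$), and $F(v)>v$ on $(0,\Xi_F)$ as an immediate consequence of the definition \cref{dec1204}.

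Your treatment of the $\clC^2$ claim is the one place where you are more careful than the paper. You correctly observe that when infinitely many $p_n$ are positive, uniform convergence of the twice-differentiated series requires a moment hypothesis on the offspring law; your bounds $\|g_n'\|_\infty\le n$ and $\|g_n''\|_\infty\le n^2$ are valid and lead to the sufficient condition $\sum_n n^2 p_n<\infty$. The paper never states such a hypothesis explicitly, though (as you note) finiteness of $F'(0)$ in \cref{eq:Fprime0} already forces at least $\sum_n \sqrt n\,p_n<\infty$ in the majority-voting case. So you have identified a tacit assumption rather than introduced a new one; the rest is, as you say, bookkeeping.
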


\subsection{The cut-off problem and the comparison principle}\label{sec:compar}

It will be technically useful to construct a cut-off problem. Let $L>0$. We can consider a modification of the dynamics described in the introduction, if a particle of generation $k$ reaches a point in $\{\pm \gamma^k L\}$, it is immediately stopped, henceforth no longer moving nor reproducing. Thus, it will end up voting ``$1$'' if it is stopped at $+\gamma^k L$ and ``$-1$'' if it is stopped at~$-\gamma^k L$. A straightforward adaptation of the derivation indicated in the introduction shows that, if $p$ is the probability that the original ancestor, in generation $\ell$, votes~$1$ when the process is started with one particle at $\gamma^\ell x$, then $u^{[L]}(t,x) = 2p-1$ does not depend on $\ell$ and satisfies the problem
\begin{align}
    \partial_tu^{[L]}(t,y)&=\frac12 \Delta u^{[L]}(t,y)+\zeta[ F(u^{[L]}(t,\gamma^{-1}y))- u^{[L]}(t,y)],&&\text{$t\ge 0$, $x\in (0,L)$;}\label{eq:uLPDE}
    \\
    u^{[L]}(t,0) &= 0,&& t> 0;\label{eq:uLleft}
    \\
    u^{[L]}(t,L) &= 1,&& t > 0;\label{eq:uLright}
    \\
    u^{[L]}(0,x) &= 1,&& x\in [0,L].\label{eq:uLic}
\end{align}

The probability that there is a $k\ge 0$ such that a particle of generation $k$ reaches $\pm \gamma^k L$ before a fixed time $t$ goes to $0$ as $L\to \infty$. This immediately implies that
\begin{equation}\label{eq:vLtov}
    \lim_{L\to\infty}u^{[L]}(t,x)=u(t,x)\qquad\text{for each }t,x\ge 0.
\end{equation}

The fact that $F$ is increasing means that the cut-off problem \crefrange{eq:uLPDE}{eq:uLic} admits a comparison principle. In fact, we will state the comparison principle more generally, as we will have occasion to use it for nonlinearities that are not of the form \cref{eq:Fdef}. %
\begin{prop}\label{prop:comparisonprinciple}
    Let $G$ be an increasing Lipschitz function.
    Let $\overline u$ and $\underline u$, assumed to be $\clC^1$ in time and piecewise $\clC^2$ in space, be super- and sub-solutions, respectively, to \cref{eq:uLPDE} with $F$ replaced by $G$. By this we mean that, for each $t,x$ at which the functions are $\clC^2$, we have
    \begin{equation}
        \partial_t\overline u(t,x)\ge\frac12 \Delta \overline u(t,x)+\zeta[ G(\overline u(t,\gamma^{-1}x))- \overline u(t,x)],\qquad\text{$t\ge 0$, $x\in (0,L)$,}\label{eq:supersoln}
    \end{equation}
and
\begin{equation}
        \partial_t\underline u(t,x)\le\frac12 \Delta \underline u(t,x)+\zeta[ G(\underline u(t,\gamma^{-1}x))- \underline u(t,x)],\qquad\text{$t\ge 0$, $x\in (0,L)$,}\label{eq:subsoln}
    \end{equation}
    and moreover that, at any $t,x$ for which $\overline u$ (resp. $\underline u$) is not $\clC^2$ in space, we have \begin{equation}\partial_x \overline u(t,x-)>\partial_x\overline u(t,x+) \qquad\qquad\text{(resp. }\partial_x \underline u(t,x-)<\partial_x\underline u(t,x+)\text).\label{eq:derivcond}\end{equation}
Suppose also that $\overline u(t,y)\ge \underline u(t,y)$ whenever $y=0$, $y=L$, or $t=0$. Then we have 
\begin{equation}\label{dec1318}
\hbox{$\overline u(t,y)\ge \underline u(t,y)$ for all $t\ge 0$ and $x\in [0,L]$.}
\end{equation}
\end{prop}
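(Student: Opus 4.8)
The plan is to argue by contradiction via the classical "first touching time" method, adapted to the nonlocal term. Set $w = \underline u - \overline u$; by hypothesis $w \le 0$ on the parabolic boundary $\{t=0\} \cup \{x=0\} \cup \{x=L\}$. Fix an arbitrary $T>0$ and a parameter $\lambda>0$ to be chosen depending on the Lipschitz constant of $G$ and on $\zeta$, and consider $\tilde w(t,x) = \e^{-\lambda t} w(t,x)$. Suppose for contradiction that $\tilde w$ becomes positive somewhere in $[0,T]\times[0,L]$. Since $[0,T]\times[0,L]$ is compact and $\tilde w$ is continuous, $\tilde w$ attains a positive maximum $M>0$ at some point $(t_0,x_0)$; because $\tilde w\le 0$ on the parabolic boundary, we must have $t_0>0$ and $x_0\in(0,L)$. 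First I would handle the possibility that $x_0$ is a point where $\underline u$ or $\overline u$ fails to be $\clC^2$: at such a point the one-sided derivative condition \cref{eq:derivcond} forces $\partial_x w(t_0,x_0-) - \partial_x w(t_0,x_0+) = (\partial_x\underline u - \partial_x\overline u)(t_0,x_0-) - (\partial_x\underline u - \partial_x\overline u)(t_0,x_0+) < 0$, i.e.\ $w$ has a convex kink there, which is incompatible with $(t_0,x_0)$ being an interior spatial maximum of $w$ (equivalently of $\tilde w$). Hence $x_0$ is a point of $\clC^2$-regularity in space for both functions.

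Next, at the interior maximum $(t_0,x_0)$ we have the standard inequalities $\partial_t \tilde w(t_0,x_0)\ge 0$ (taking a one-sided time derivative if $t_0=T$), $\partial_x \tilde w(t_0,x_0)=0$, and $\Delta \tilde w(t_0,x_0)\le 0$; equivalently, in terms of $w$, $\partial_t w(t_0,x_0)\ge \lambda w(t_0,x_0)$ and $\Delta w(t_0,x_0)\le 0$. Subtracting \cref{eq:subsoln} from \cref{eq:supersoln} at $(t_0,x_0)$ and rearranging gives
\begin{equation*}
\partial_t w(t_0,x_0) \le \tfrac12\Delta w(t_0,x_0) + \zeta\bigl[G(\underline u(t_0,\gamma^{-1}x_0)) - G(\overline u(t_0,\gamma^{-1}x_0))\bigr] - \zeta w(t_0,x_0).
\end{equation*}
The key point is the sign of the nonlocal difference: since $G$ is increasing, $G(\underline u(t_0,\gamma^{-1}x_0)) - G(\overline u(t_0,\gamma^{-1}x_0))$ has the same sign as $w(t_0,\gamma^{-1}x_0) = \e^{\lambda t_0}\tilde w(t_0,\gamma^{-1}x_0) \le \e^{\lambda t_0} M = w(t_0,x_0)$; combined with the Lipschitz bound $|G(a)-G(b)|\le \Lip(G)|a-b|$, we get $\zeta[G(\underline u) - G(\overline u)]\big|_{(t_0,\gamma^{-1}x_0)} \le \zeta\,\Lip(G)\,\max\{0, w(t_0,\gamma^{-1}x_0)\} \le \zeta\,\Lip(G)\, w(t_0,x_0)$ (note $w(t_0,x_0)=M\e^{\lambda t_0}>0$, so this bound is valid whether or not $w(t_0,\gamma^{-1}x_0)$ is positive). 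Plugging in $\Delta w(t_0,x_0)\le 0$ and $\partial_t w(t_0,x_0)\ge \lambda w(t_0,x_0)$ yields
\begin{equation*}
\lambda\, w(t_0,x_0) \le \bigl(\zeta\,\Lip(G) - \zeta\bigr) w(t_0,x_0),
\end{equation*}
and choosing $\lambda > \zeta\,\Lip(G)$ (for instance $\lambda = \zeta\Lip(G)+1$) contradicts $w(t_0,x_0)>0$. Therefore $\tilde w\le 0$ on $[0,T]\times[0,L]$, hence $w\le 0$ there, and since $T$ was arbitrary this proves \cref{dec1318}.

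The main obstacle, and the only place where the nonlocality genuinely enters, is controlling the term $G(\underline u(t,\gamma^{-1}x)) - G(\overline u(t,\gamma^{-1}x))$ at the maximum point: the shifted argument $\gamma^{-1}x_0$ is a \emph{different} point from $x_0$, so one cannot simply say this difference is nonpositive. The resolution, as above, is that the global spatial maximum of $\tilde w$ at time $t_0$ dominates its value at $\gamma^{-1}x_0$, which together with monotonicity and Lipschitz continuity of $G$ gives exactly the one-sided bound needed; this is precisely why the hypothesis that $G$ (and in particular $F$) is increasing cannot be dropped. A minor technical point worth addressing is that one should first check $\tilde w$ is continuous up to the boundary and that the one-sided time derivative at $t_0=T$ makes sense, both of which follow from the assumed $\clC^1$-in-time, piecewise-$\clC^2$-in-space regularity of $\overline u$ and $\underline u$; and one uses that $\gamma^{-1}x_0 \in (0,L)$ as well since $\gamma>1$ and $x_0\in(0,L)$, so the value $w(t_0,\gamma^{-1}x_0)$ is taken at an interior point where $\tilde w\le M$.
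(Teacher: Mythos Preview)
Your argument is correct and follows essentially the same route as the paper's proof: both introduce an exponentially weighted difference, locate an interior extremum, use the kink condition \cref{eq:derivcond} to ensure $\clC^2$-regularity there, and control the nonlocal term $G(\underline u)-G(\overline u)$ at the shifted point $\gamma^{-1}x_0$ via monotonicity of $G$ together with the global extremality of the weighted difference. The only cosmetic differences are that the paper works with $\overline u-\underline u$ and a first hitting time of level $-\eps$ rather than a global maximum on $[0,T]\times[0,L]$, and that your choice $\lambda>\zeta\Lip(G)$ is slightly stronger than necessary (your own displayed inequality only requires $\lambda>\zeta(\Lip(G)-1)$, matching the paper's threshold).
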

\begin{proof}
    The proof proceeds in a standard way for a comparison principle for a parabolic equation, but since our equation is nonlocal, we give the details.

    Let \begin{equation}\mu >\zeta[\Lip(G)-1]\label{eq:mudef}\end{equation} and define
    \[
    w(t,x) = \e^{-\mu t}(\overline u-\underline u)(t,x).\]
    Now fix $\eps>0$ and let \[T = \inf\left\{t\ge 0\ :\ \inf_{x\in [0,L]}w(t,x)\le -\eps\right\}.\] Suppose for the sake of contradiction that $T<\infty$.
    The boundary conditions on $\overline u$ and $\underline u$ imply that $T>0$, that 
    $w(T,\cdot)$ achieves its minimum at some $X\in (0,L)$, and that \begin{equation}\label{eq:wisminminuseps-general}w(T,X)=\inf\limits_{x\in [0,L]}w(T,x)=-\eps.\end{equation} Moreover, \cref{eq:derivcond} implies that $w$ is $\clC^1$ in time and $\clC^2$ in space at $T,X$. %
    By the definitions of $w$, $T$ and $X$, along with \cref{eq:supersoln,eq:subsoln,eq:wisminminuseps-general}, we have
    \begin{equation}
    \begin{aligned}0&\ge \partial_t w(T,X) \\&\ge -\mu w(T,X) + \frac12\Delta w(T,X) + \zeta[\e^{-\mu t}(G(\overline u(T,\gamma^{-1} X))-G(\underline u(T,\gamma^{-1} X)))-w(T,X)].%
    \end{aligned}
    \label{eq:0gt}
    \end{equation}
    We have $\Delta w(T,X)\ge 0$ since the minimum is achieved at $X$, and $w(T,X)=-\eps$.
        Also, using the assumption that $G$ is monotone and Lipschitz, we can write \begin{align*}\e^{-\mu t}(G(\overline u (T,\gamma^{-1}X))-G(\underline u(T,\gamma^{-1}X)))&\ge -\e^{-\mu t}\Lip(G)(\overline u(T,\gamma^{-1}X)-\underline u(T,\gamma^{-1}X))_-\\&\ge -\Lip(G)\eps ,\end{align*}
        where we use the notation $x_- = -(x\wedge 0)$, and the second inequality is by \cref{eq:wisminminuseps-general}. Using these observations
        in \cref{eq:0gt}, we obtain
        \[0\ge (\mu -\zeta\Lip(G)+\zeta)\eps,\]
        contradicting \cref{eq:mudef}.
\end{proof}

\subsection{Monotonicity and convergence}\label{sec:monot}

In this subsection we prove \cref{thm:convergence}. We do this by showing that $u(t,x)$ is decreasing in $t$, hence must approach a limit. The comparison principle will imply that the solution cannot cross any subsolution, but the limit itself will be a solution, so the limit will in fact be the maximal solution.

\begin{prop}\label{prop:vdecreasing}
	For each fixed $x>0$, the function $t\mapsto u(t,x)$ is decreasing.
\end{prop}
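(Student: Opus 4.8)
The natural approach is to use the comparison principle (\cref{prop:comparisonprinciple}) applied to time-translates of $u$. Specifically, for $h>0$ I would like to compare $u(t+h,\cdot)$ with $u(t,\cdot)$ on $(0,L)$ and conclude $u(t+h,x)\le u(t,x)$, then send $L\to\infty$ using \cref{eq:vLtov}. However, \cref{prop:comparisonprinciple} is stated for the cut-off problem \crefrange{eq:uLPDE}{eq:uLic}, so I should work with $u^{[L]}$ rather than $u$ directly. The key point is that the cut-off problem is autonomous: the equation \cref{eq:uLPDE} and the boundary conditions \cref{eq:uLleft}, \cref{eq:uLright} do not depend on $t$, so $\overline u(t,x)\coloneqq u^{[L]}(t,x)$ and $\underline u(t,x)\coloneqq u^{[L]}(t+h,x)$ both satisfy \cref{eq:uLPDE} (the former is a solution, hence both a super- and sub-solution).

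The comparison then hinges on checking the hypotheses at $t=0$, $x=0$, and $x=L$. At $x=0$ and $x=L$ both functions agree (equal to $0$ and $1$ respectively), so there $\overline u\ge\underline u$ trivially. At $t=0$ we need $u^{[L]}(0,x)\ge u^{[L]}(h,x)$, i.e.\ $1\ge u^{[L]}(h,x)$ for $x\in[0,L]$; this follows because $u^{[L]}$ is a probability-based quantity bounded by $1$ (equivalently, $1$ is a supersolution to the cut-off problem since $F(1)=1$ by \cref{dec1202}, and the initial/boundary data of $u^{[L]}$ are all $\le 1$, so the comparison principle gives $u^{[L]}\le 1$ everywhere). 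With all boundary hypotheses verified, \cref{prop:comparisonprinciple} (with $G=F$, which is increasing and Lipschitz on the relevant range) yields $u^{[L]}(t,x)\ge u^{[L]}(t+h,x)$ for all $t\ge0$, $x\in[0,L]$. Taking $L\to\infty$ via \cref{eq:vLtov} gives $u(t,x)\ge u(t+h,x)$ for all $t\ge0$, $h>0$, $x>0$, which is exactly the claimed monotonicity.

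One technical wrinkle to address is the regularity needed to invoke \cref{prop:comparisonprinciple}: we need $u^{[L]}$ to be $\clC^1$ in time and (piecewise) $\clC^2$ in space. Since $u^{[L]}$ solves a linear-in-principal-part parabolic problem with a Lipschitz (indeed smooth) nonlocal reaction term and compatible-enough data, standard parabolic regularity gives smoothness for $t>0$; near $t=0$ one can either note that the comparison argument only needs the inequality to propagate forward from $t=0^+$ (applying the principle on $[\delta,\infty)$ and letting $\delta\to0$), or observe directly from the Duhamel representation that $u^{[L]}(t,\cdot)$ has the requisite smoothness.

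The main obstacle, and the only genuinely substantive point, is ensuring the comparison principle applies cleanly despite the nonlocal term evaluating $u$ at $\gamma^{-1}x\in(0,x)\subset(0,L)$—but this is already built into the statement of \cref{prop:comparisonprinciple}, which was proved precisely to handle this nonlocality. So the proof is essentially an application of the autonomy of the cut-off problem plus the comparison principle, with the limit $L\to\infty$ at the end.
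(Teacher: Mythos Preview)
Your proposal is correct and follows essentially the same approach as the paper: compare time-translates of the cut-off solution $u^{[L]}$ via the comparison principle, check the boundary and initial orderings, and pass to the limit $L\to\infty$ using \cref{eq:vLtov}. The only cosmetic difference is that the paper first linearizes the equation for the difference $z=u^{[L]}(\cdot+h,\cdot)-u^{[L]}$ and applies \cref{prop:comparisonprinciple} with $G(u)=Mu$, whereas you apply the comparison principle directly with $G=F$; both are valid.
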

\begin{proof}
Let $M = \sup\limits_{u\in [0,1]}F'(x)$ and let $h,L\in(0,\infty)$.
If we define 
\[
z(t,x) = u^{[L]}(t+h,x)-u^{[L]}(t,x),
\]
then by
subtracting two copies of \cref{eq:uLPDE}, we see that
\begin{align*}
    \partial_t z(t,x) &= \frac12\Delta z(t,x) + \zeta[F(u^{[L]}(t+h,\gamma^{-1}x))-F(u^{[L]}(t,\gamma^{-1}x))-z(t,x)]\nonumber\\
                      &\le\frac12\Delta z(t,x) + \zeta[Mz(t,\gamma^{-1}x)-z(t,x)]
\end{align*}
and $z(t,0)=z(t,L)=0$ for all $t>0$. Furthermore, at $t=0$, we have 
\[
\hbox{$z(0,x) = u^{[L]}(t,x)-1\le  0$ for all $x\in (0,L)$.}
\]
		      Therefore, by the comparison principle (\cref{prop:comparisonprinciple}, with $G(u)=Mu$), we   
		      see that
		      \[
		      \hbox{$z(t,x)\le 0$ for all $t,x> 0$.}
		      \]
		      Since this is true for every $h>0$, this means that $t\mapsto u^{[L]}(t,x)$ is decreasing for each fixed $L$ and $x$. But since this is true for every $L$, and $u^{[L]}(t,x)\to u(t,x)$ as $L\to\infty$ as noted in \cref{eq:vLtov}, we see that $x\mapsto u(t,x)$ is decreasing for every fixed $x$ as well.
\end{proof}

\begin{prop}\label{prop:vconverges}
    There is a function $U_\infty:[0,\infty)\to[0,1]$ such that, for each $x>0$, we have
	\[
		\lim_{t\to\infty} u(t,x)=U_\infty(x).
	\]
\end{prop}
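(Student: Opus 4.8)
The statement to be proved is \cref{prop:vconverges}, asserting the existence of the pointwise limit $U_\infty(x) = \lim_{t\to\infty} u(t,x)$ for each $x > 0$, with values in $[0,1]$. The plan is to combine the monotonicity just established in \cref{prop:vdecreasing} with the two-sided bounds on $u$ to pass to a limit via the monotone convergence of a bounded decreasing sequence.

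\emph{Step 1: Bound $u$ between $0$ and $1$.} First I would record that $0 \le u(t,x) \le 1$ for all $t \ge 0$ and $x \ge 0$. The upper bound $u \le 1$ holds because the constant function $1$ is a supersolution to the cut-off problem \crefrange{eq:uLPDE}{eq:uLic} (indeed $F(1) = 1$ by \cref{dec1202}, so the constant $1$ satisfies the cut-off equation with equality and matches the boundary and initial data from above), so the comparison principle \cref{prop:comparisonprinciple} with $G = F$ gives $u^{[L]} \le 1$, and then $u \le 1$ by \cref{eq:vLtov}. For the lower bound $u \ge 0$: the constant function $0$ is a subsolution (again $F(0) = 0$ since $F$ is odd, so $0$ solves the cut-off equation, and $0 \le u^{[L]}$ on the parabolic boundary since $u^{[L]} = 0$ at $y=0$, $u^{[L]}=1$ at $y=L$, and $u^{[L]}(0,\cdot)=1$), so $u^{[L]} \ge 0$ and hence $u \ge 0$.

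\emph{Step 2: Pass to the limit.} By \cref{prop:vdecreasing}, for each fixed $x > 0$ the function $t \mapsto u(t,x)$ is decreasing, and by Step 1 it is bounded below by $0$. A bounded decreasing function of $t$ on $[0,\infty)$ converges as $t \to \infty$; call the limit $U_\infty(x)$. Since $0 \le u(t,x) \le 1$ for all $t$, the limit satisfies $0 \le U_\infty(x) \le 1$, so $U_\infty$ indeed maps $[0,\infty) \to [0,1]$ (setting $U_\infty(0) = 0$ to be consistent with the boundary condition \cref{eq:uDirichlet}). This completes the proof.

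\emph{On difficulty.} There is essentially no obstacle here: the content is entirely in \cref{prop:vdecreasing} (monotonicity in time, which used the comparison principle) and in the elementary a priori bounds. The only mild point worth stating carefully is the uniform bound $0 \le u \le 1$, which follows from the comparison principle applied to constant sub/supersolutions exactly as in Step 1; once that is in hand, the convergence is just the monotone convergence of a bounded decreasing net. The identification of $U_\infty$ as the \emph{maximal} solution to the elliptic problem \crefrange{eq:nonlocalelliptic}{eq:nonlocalellipticlt1} — which is the substance of \cref{thm:convergence} — is deferred to subsequent arguments and is not part of this proposition.
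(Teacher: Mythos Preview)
Your proof is correct and follows essentially the same approach as the paper: the paper's argument is simply that $0$ is a subsolution, so by \cref{prop:comparisonprinciple,prop:vdecreasing} the function $t\mapsto u(t,x)$ is decreasing and bounded below, hence converges. You supply a bit more detail on the a priori bounds (in particular the upper bound $u\le 1$, which the paper leaves implicit from the probabilistic interpretation), but the core idea is identical.
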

\begin{proof}
	We note that $\underline u(t,x) = 0$ is obviously a subsolution, satisfying \cref{eq:subsoln}. This means by \cref{prop:comparisonprinciple,prop:vdecreasing} that $u(t,x)$ is bounded below and decreasing in $t$, so 
	\[
	U_\infty(x)\coloneqq\lim\limits_{t\to\infty} u(t,x)
	\]
	exists and is nonnegative.
\end{proof}

Now we can prove \cref{thm:convergence}.
\begin{proof}[Proof of \cref{thm:convergence}]
    In light of \cref{prop:vconverges}, it remains to show that $U_\infty$ is the maximal solution of \crefrange{eq:nonlocalelliptic}{eq:nonlocalellipticlt1}.
    
    First we claim that $U_\infty$ is in fact a solution. Let $\|u(t,\cdot)\|_{\alpha}$ denote the $\clC^\alpha$ (Hölder) norm of $u(t,\cdot)$. Now we use parabolic regularity estimates such as those of \cite{KS81}, along with the fact that 
    $u$ is uniformly bounded, to obtain a $\delta>0$ such that
    $\|u(t,\cdot)\|_{\delta}$ is bounded uniformly in $t$.
    But since $F$ is $\clC^2$, this means that $\|F(u(t,\gamma^{-1}\cdot))\|_{\delta}$ is also bounded by a constant, uniformly in $t$. Schauder estimates (e.g.~\cite[Theorem~8.11.1]{Kry96}) then imply   %
    that~$\|u(t,\cdot)\|_{2+\delta}$ is uniformly bounded in $t$. In particular, this means that, for any compact $\clK\subseteq (0,\infty)$, the family $(u(t,\cdot)|_\clK)_t$ is compact in $\clC^2(\clK)$, so \begin{equation}u(t,\cdot)\to U_\infty(\cdot )\qquad\text{in } \clC^2(\clK).\label{eq:liminC2}\end{equation} Also, the family $(\frac12\Delta u(t,\cdot)+\zeta[F(u(t,\gamma^{-1}\cdot))-u(t,\cdot)]|_\clK)_t$ is precompact in $\clC^0$, so it converges to a limit in $\clC^0$. On the other hand, since there is a sequence $t_k\nearrow\infty$ such that 
    \[
    \lim\limits_{t\to\infty}\partial_t u(t_k,x)=0,
    \]
    and $u$ satisfies \cref{eq:uPDE}, this means that 
    \[
    \adjustlimits\lim_{k\to\infty} \sup_{x\in\clK}\Big|\frac12\Delta u(t_k,x)+\zeta[F(u(t_k,\gamma^{-1}x))-u(t_k,x)]\Big| =0.
    \] 
    Together with \cref{eq:liminC2}, this implies that 
    \[
    \frac12\Delta U_\infty + \zeta[F(U_\infty(\gamma^{-1}\cdot))-U_\infty]=0.
    \]

        Now we show that $U_\infty$ is the maximal solution to \crefrange{eq:uPDE}{eq:uIC}. Indeed, suppose there is another solution $V$ to \crefrange{eq:uPDE}{eq:uIC} and there is some $x>0$ such that $V(x)>U_\infty(x)$. But then \cref{prop:comparisonprinciple} implies that $u^{[L]}(t,x)\ge V(x)$ for all $t,L\in(0,\infty)$, so it is impossible that $u(t,x)\to U_\infty(x)$ as $t\to\infty$, which is a contradiction.
\end{proof}

\subsection{The supersolution}\label{sec:super}

In this subsection we prove the first part of \cref{thm:dichotomy}.
For $\delta,\xi\ge 0$ and $\omega\in (0,1]$, we define
\begin{equation}
    v_{\delta,\xi,\omega}(t,x) = \e^{-\delta t}\xi x^\omega.
\end{equation}
\begin{prop}
    Assume that %
\begin{equation}
    0\le\delta\le \zeta[1-\Upsilon_F\gamma^{-\omega}].
    \label{eq:UpsilonFltgamma}
\end{equation}
Then for all $\xi>0$,
$t\ge 0$, and $x\in (0,\infty)$ we have 
\begin{equation}\label{eq:vdeltasupersoln}
    \partial_tv_{\delta,\xi,\omega}(t,x)\ge\frac12\Delta v_{\delta,\xi}(t,x) + \zeta [ F(v_{\delta,\xi}(t,\gamma^{-1} x))- v_{\delta,\xi}(t,x)].
\end{equation}
\end{prop}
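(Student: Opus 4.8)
The plan is a direct verification that, after choosing the ansatz, reduces to a single scalar inequality. Write $v = v_{\delta,\xi,\omega}$ for brevity. The left-hand side of \cref{eq:vdeltasupersoln} is immediate, $\partial_t v(t,x) = -\delta\,v(t,x)$. For the right-hand side I would first note that the Laplacian term is harmless: since $\omega\in(0,1]$, we have $\Delta v(t,x) = \omega(\omega-1)\,\e^{-\delta t}\xi x^{\omega-2}\le 0$, so $\tfrac12\Delta v(t,x)\le 0$ and it can simply be discarded. This is precisely why the concave power $x^\omega$ with $\omega\le 1$ is used.

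The key structural fact is that $x^\omega$ is an eigenfunction of the dilation $x\mapsto\gamma^{-1}x$: indeed $v(t,\gamma^{-1}x)=\gamma^{-\omega}v(t,x)$, so the nonlocal reaction term becomes $\zeta[F(\gamma^{-\omega}v(t,x))-v(t,x)]$, a function of $v(t,x)$ alone. By the definition $\Upsilon_F=\sup_{w\in(0,1]}F(w)/w$, we have $F(w)\le\Upsilon_F w$ for every $w\in(0,1]$; applying this with $w=\gamma^{-\omega}v(t,x)$, which lies in $(0,1]$ whenever $v(t,x)\le 1$ (note $\gamma^{-\omega}<1$), gives
\[
\zeta\bigl[F(\gamma^{-\omega}v(t,x))-v(t,x)\bigr]\le\zeta\bigl[\Upsilon_F\gamma^{-\omega}-1\bigr]v(t,x).
\]
Combining the two bounds, the right-hand side of \cref{eq:vdeltasupersoln} is at most $\zeta[\Upsilon_F\gamma^{-\omega}-1]\,v(t,x)$, which is $\le -\delta v(t,x)=\partial_t v(t,x)$ exactly because hypothesis \cref{eq:UpsilonFltgamma} says $\delta\le\zeta[1-\Upsilon_F\gamma^{-\omega}]$. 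Since the bound on $F$ is linear in its argument, the whole estimate is linear in $\xi$, so it holds for every $\xi>0$ at once.

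The only point needing a word of care is the range $v(t,x)>1$, where $F(w)\le\Upsilon_F w$ was not a priori supplied. I would handle it by recording that $\Upsilon_F\ge F(1)=1$ and extending $F$ to $[1,\infty)$ by the constant value $1$ (still increasing and Lipschitz, hence compatible with \cref{prop:comparisonprinciple}); then $F(w)=1\le\Upsilon_F\le\Upsilon_F w$ for all $w\ge1$, so $F(w)\le\Upsilon_F w$ for every $w>0$ and the computation above applies verbatim for all $x\in(0,\infty)$. I do not anticipate any real obstacle: the content of the proposition is entirely in selecting the ansatz $\e^{-\delta t}\xi x^\omega$, after which the verification is a one-line estimate.
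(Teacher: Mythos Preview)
Your argument is correct and follows the same route as the paper's own proof: discard the Laplacian using $\omega\le 1$, exploit the dilation-eigenfunction identity $v(t,\gamma^{-1}x)=\gamma^{-\omega}v(t,x)$, bound $F$ linearly via $\Upsilon_F$, and compare with $-\delta v$. Your extra care about the regime $v>1$ (where $\Upsilon_F$ is not a priori defined) is a point the paper leaves implicit; your extension of $F$ by the constant $1$ is a clean way to handle it.
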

\begin{proof}
    Since $\omega\le 1$, we have $\Delta v_{\delta,\xi,\omega}(t,x) \le 0$ for all $x\ge 0$. Moreover, we have
    \[F(v_{\delta,\xi,\omega}(t,\gamma^{-1} x)) - v_{\delta,\xi}(t,x) \le \Upsilon_Fv_{\delta,\xi,\omega}(t,\gamma^{-1} x)-v_{\delta,\xi,\omega}(t,x) = \e^{-\delta t}\xi x^\omega[ \Upsilon_F\gamma^{-\omega} -1],\]
    whereas 
    \[\partial_tv_{\delta,\xi,\omega}(t,x) = -\delta\e^{-\delta t}\xi x^\omega.\]
    Thus 
    \cref{eq:vdeltasupersoln} follows %
    from \cref{eq:UpsilonFltgamma}.
\end{proof}

\begin{prop}
    \label{prop:lt0}
    Under the assumption \cref{eq:UpsilonFltgamma}, there exists a $\xi>0$ such that  
    \begin{equation}\label{dec1402}
    \hbox{$u(t,x) \le v_{\delta,\xi,\omega}(t,x)$ for each $t\ge 1$ and $x\in\RR$.}
    \end{equation}
\end{prop}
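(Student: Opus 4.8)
The plan is to trap $u$ beneath the supersolution $v_{\delta,\xi,\omega}$ via the comparison principle \cref{prop:comparisonprinciple} for the cut-off problems \crefrange{eq:uLPDE}{eq:uLic}, and then let $L\to\infty$ through \cref{eq:vLtov}. The essential points are: choosing $\xi$ large enough to control the profile at the fixed positive time $t=1$ (using parabolic smoothing), and coping with the fact that the cut-off problem has the constant $1$ as its right-hand Dirichlet datum, which the decaying supersolution cannot dominate for all times on a fixed interval $[0,L]$.

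\emph{Step 1: the profile at $t=1$.} Since $0$ and $1$ are respectively a sub- and a supersolution of \crefrange{eq:uLPDE}{eq:uLic}, \cref{prop:comparisonprinciple} gives $0\le u^{[L]}\le1$ for every $L$. Fix $L\ge2$. Applying to $u^{[L]}$ the parabolic regularity estimates used in the proof of \cref{thm:convergence}, on a fixed space-time region containing $\{t=1,\ x\in[0,1]\}$ and at positive distance from $\{x=L\}$ and from $\{t=0\}$, and using $u^{[L]}(1,0)=0$, one gets a constant $C_0\ge1$ independent of $L\ge2$ with $u^{[L]}(1,x)\le C_0 x$ for $x\in[0,1]$. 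Since $\omega\in(0,1]$ we have $x\le x^\omega$ on $(0,1]$ and $x^\omega\ge1$ on $[1,\infty)$, so combined with $u^{[L]}\le1$ this yields $u^{[L]}(1,x)\le C_0 x^\omega$ for all $x\in[0,L]$, $L\ge2$. Set $\xi\coloneqq\e^{\delta}C_0$, so that $v_{\delta,\xi,\omega}(1,x)=\e^{-\delta}\xi x^\omega=C_0 x^\omega\ge u^{[L]}(1,x)$ on $[0,L]$.

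\emph{Step 2: comparison, then $L\to\infty$.} By \cref{eq:vdeltasupersoln}, $v_{\delta,\xi,\omega}$ — and hence its time translate $(t,x)\mapsto v_{\delta,\xi,\omega}(t+1,x)=v_{\delta,\e^{-\delta}\xi,\omega}(t,x)$ — is a supersolution of \cref{eq:uLPDE}. At $x=0$ it and $u^{[L]}$ both vanish; at $t=1$ it dominates $u^{[L]}$ on $[0,L]$ by Step 1. The only obstruction is at $x=L$, where $u^{[L]}\equiv1$ but $v_{\delta,\xi,\omega}(t,L)=\e^{-\delta t}\xi L^\omega\ge1$ only for $t\le T_L\coloneqq\delta^{-1}\log(\xi L^\omega)$ (no constraint, once $L$ is large, when $\delta=0$). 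The proof of \cref{prop:comparisonprinciple} — run on the finite interval $[1,T_L]$, with $G$ any increasing Lipschitz extension of $F|_{[0,1]}$ satisfying $G(v)\le\Upsilon_F v$ for $v\ge1$, which is harmless since $0\le u^{[L]}\le1$ — goes through verbatim, because the first time the running infimum of $\e^{-\mu t}(v_{\delta,\xi,\omega}-u^{[L]})$ can reach $-\eps$ is neither at $x\in\{0,L\}$ nor at the initial time. Hence $u^{[L]}(t,x)\le v_{\delta,\xi,\omega}(t,x)$ for $t\in[1,T_L]$, $x\in[0,L]$. Given $t\ge1$ and $x\ge0$, picking $L\ge\max(2,x)$ with $T_L\ge t$ and using \cref{eq:vLtov} gives $u(t,x)=\lim_{L\to\infty}u^{[L]}(t,x)\le v_{\delta,\xi,\omega}(t,x)$; for $x\le0$ the bound is trivial since then $u(t,x)\le0\le v_{\delta,\xi,\omega}(t,x)$ by oddness.

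The step I expect to take the most care is reconciling the available comparison principle — stated for the cut-off problem on $[0,L]$ with the constant $1$ as right-hand datum — with the supersolution $v_{\delta,\xi,\omega}$, which decays in $t$; the remedy is to compare only up to the $L$-dependent horizon $T_L$ and then send $L\to\infty$, which is exactly why Step 1 must produce a bound on $u^{[L]}(1,\cdot)$ near $x=0$ that is uniform in $L$.
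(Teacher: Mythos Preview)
Your argument is correct, but the paper takes a shorter route that you may find instructive. Rather than passing through the cut-off problems $u^{[L]}$, the paper runs the maximum-principle contradiction directly on the half-line for $u$ itself: with $w(t,x)=\e^{-\mu t}(v_{\delta,\xi,\omega}-u)(t,x)$ and $T$ the first time $\inf_{x\ge0}w(T,x)\le-\eps$, one observes that $w(T,0)=0$ and $\lim_{x\to+\infty}w(T,x)=+\infty$ (because $u\le1$ while $v_{\delta,\xi,\omega}(T,x)=\e^{-\delta T}\xi x^\omega\to+\infty$), so the infimum is attained at some finite $X\in(0,\infty)$, and the contradiction from the proof of \cref{prop:comparisonprinciple} goes through verbatim. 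This exploits exactly the feature of $v_{\delta,\xi,\omega}$ that gave you trouble at $x=L$---its growth at infinity---to \emph{replace} the right-boundary condition rather than fight it. Your approach, by contrast, stays strictly within the finite-interval comparison of \cref{prop:comparisonprinciple}, at the price of the uniform-in-$L$ regularity estimate in Step~1 and the time horizon $T_L$; it has the mild advantage that you never re-open the proof of the comparison principle, and your remark about extending $F$ to a $G$ with $G(v)\le\Upsilon_F v$ for $v\ge1$ is a clean way to handle the supersolution property where $v_{\delta,\xi,\omega}$ exceeds~$1$.
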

\begin{proof}
    Since $u$ can be extended to a solution to \cref{eq:uPDE-wholeline} on the whole line and $0<\omega\le 1$, it follows from standard properties of the heat equation (e.g. Schauder estimates as in the proof of \cref{thm:convergence}) that %
 there is a $\xi>0$ large enough that \begin{equation}v_{\delta,\xi,\omega}(1,x)\ge u(1,x)\qquad\text{for all $x\ge 0$.}\end{equation} Let \[\mu >\zeta[F'(0)-1]\] and define
    \[
    w(t,x) = \e^{-\mu t}(v_{\delta,\xi,\omega}-u)(t,x).\]
    Now fix $\eps>0$ and let \[T = \inf\left\{t\ge 1\ :\ \inf_{x\ge 0}w(t,x)\le -\eps\right\}.\] Suppose for the sake of contradiction that $T<\infty$. Since $u$ is bounded above by $1$, we note that $w(T,0) = 0$ and $\lim\limits_{x\to+\infty}w(T,x)=+\infty$. This means that $w(T,\cdot)$ achieves its minimum at some $X\in (0,\infty)$. We then derive a contradiction in the same way as in the proof of \cref{prop:comparisonprinciple}.
\end{proof}

\begin{proof}[Proof of \cref{thm:dichotomy}\textup{(\ref{enu:super})}]
    If $\gamma > \Upsilon_F$, then \cref{eq:UpsilonFltgamma} can be satisfied with $\omega =1$ and some $\delta>0$. Since with these choices we have  $\lim\limits_{t\to\infty}v_{\delta,\xi,\omega}(t,x)=0$ for each fixed $x\ge 0$, 
\cref{thm:dichotomy}(\ref{enu:super}) follows from \cref{prop:vconverges,prop:lt0}.
\end{proof}

\begin{proof}[Proof of \cref{thm:speed}\textup{(\ref{enu:speedsuper})}]
    Take $\omega$ to achieve the supremum in \cref{eq:Sigmadef}, with $f\mapsfrom \Upsilon_F$, and let 
    \[
    \delta = \zeta[1-\Upsilon_F\gamma^{-\omega}] = \omega \zeta\Sigma(\Upsilon_F,\gamma).
    \]
    Then \cref{prop:lt0} tells us that there is a $\xi>0$ such that
    \[
    u(t,x)\le v_{\delta,\xi,\omega}(t,x) = v_{0,\xi,\omega}(1,\e^{-\zeta\Sigma(\Upsilon_F,\gamma)t}x)\qquad\text{for all $t\ge 1$ and $x\ge 0$,}
    \]
    which is  \cref{eq:speed-super}.
\end{proof}

\subsection{The subsolution}\label{sec:sub}

In this subsection we prove the \ordinalstringnum{\getrefnumber{enu:sub}} parts of \cref{thm:dichotomy,thm:speed}. Throughout this section, we fix $F$, $\zeta$, and $\gamma$ and also fix a \begin{equation}\label{eq:nucond}\nu >\zeta\Sigma(F'(0),\gamma)\end{equation} (recalling the definition \cref{eq:Sigmadef}
and the assumption that $F'(0)>1$).
We also recall the definition of $\Xi_F$ in the statement of \cref{thm:dichotomy}.

We start by constructing an auxiliary function $H$, which will operate as a stand-in for~$F$ with some additional useful properties. The following lemma is proved in Section~\ref{sec:H-constr} below.
\begin{lem}\label{lem:HfromF}
    For any $f\in (1,F'(0))$, there is a $\delta\in (0,\frac12\Xi_F)$ and a $\clC^2$ function 
    \[
    H\colon[0,\Xi_F]\to[0,\Xi_F]
    \]
    with the following properties:
    \begin{enumerate}
        \item $H(0) =0$, $H(\Xi_F)=\Xi_F$, and %
            \begin{equation}
                \label{eq:HbelowF}
                u<H(u)\le F(u) %
                \qquad\text{for all }u\in (0,\Xi_F).
            \end{equation}
        \item $H'(u)>0$ for all $u\in [0,\Xi_F]$. In particular, $H'(\Xi_F)>0$.
         \label{enu:H'pos}
        \item $H(x) = fx$ for all $x\in [0,\delta]$.\label{enu:Hstartslinear}
        \item $H$ is convex on $[\Xi_F-\delta,\Xi_F]$.\label{enu:Hconvex}
    \end{enumerate}
\end{lem}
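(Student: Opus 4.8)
\emph{Plan.} I would assemble $H$ from three $\clC^2$ pieces — linear near $0$, convex near $\Xi_F$, and a $\clC^2$ interpolant in between — matching their $2$-jets at the two joins so that $H$ is globally $\clC^2$. The properties $H\colon[0,\Xi_F]\to[0,\Xi_F]$, $\mathrm{id}<H\le F$ on $(0,\Xi_F)$, and $H'>0$ will be transparent from the construction; the real content is making $H$ linear near $0$, convex near $\Xi_F$, and $\clC^2$ across the joins.

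\emph{Near $0$.} Since $F\in\clC^2$ and $F'(0)>f$, we have $F(u)/u\to F'(0)>f$ as $u\searrow0$, so there is $\delta_0>0$ with $F(u)>fu$ on $(0,\delta_0)$; on $[0,\delta_1]$, with $\delta_1<\delta_0$ to be fixed, set $H(u)=fu$. This gives property~(\ref{enu:Hstartslinear}), the sandwich $u<fu=H(u)\le F(u)$ there (the first inequality because $f>1$), and $H'\equiv f>0$. It is convenient to also record three consequences of the standing hypotheses on $F$: that $F>\mathrm{id}$ on all of $(0,\Xi_F)$; that $F(\Xi_F)=\Xi_F$ (the set $\{v>0:F(v)\le v\}$ is closed and contains $1$, so $F(\Xi_F)\le\Xi_F$, while $F>\mathrm{id}$ on $(0,\Xi_F)$ forces $F(\Xi_F)\ge\Xi_F$); and, by examining $g\coloneqq F-\mathrm{id}$ just to the left of $\Xi_F$, where $g>0$ and $g(\Xi_F)=0$, that $F'(\Xi_F)\le1$, with moreover $F''(\Xi_F)\ge0$ when $F'(\Xi_F)=1$.

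\emph{Near $\Xi_F$.} If $F'(\Xi_F)<1$: by continuity of $F'$ choose $\delta_2>0$ with $\sup_{[\Xi_F-\delta_2,\Xi_F]}F'<1$, then $m$ with $\sup_{[\Xi_F-\delta_2,\Xi_F]}F'<m<1$ and $m>0$, and put $H(u)=\Xi_F+m(u-\Xi_F)$ on $[\Xi_F-\delta_2,\Xi_F]$: this is affine (hence convex), has $H(\Xi_F)=\Xi_F$ and $H'\equiv m>0$, satisfies $\mathrm{id}<H$ there since $m<1$, and $H<F$ there since the mean value theorem gives $F(u)-\Xi_F=F'(c)(u-\Xi_F)>m(u-\Xi_F)=H(u)-\Xi_F$ for some $c\in(\Xi_F-\delta_2,\Xi_F)$ (the inequality flips because $u-\Xi_F<0$ and $F'(c)<m$). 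If $F'(\Xi_F)=1$ then $F''(\Xi_F)\ge0$, and: if $F''(\Xi_F)>0$ then $F''>0$ on a one-sided neighbourhood of $\Xi_F$, so $F$ is itself convex there and we take $H=F$ near $\Xi_F$ (with $H'=F'>0$ near $\Xi_F$ because $F'(\Xi_F)=1$); if $F''(\Xi_F)=0$ we take $H=\mathrm{id}+\phi$ for a convex $\clC^2$ bump $\phi\ge0$ with a double zero at $\Xi_F$ and $0<\phi\le g$ on a left-neighbourhood of $\Xi_F$, obtained by taking a suitable convex piecewise-linear minorant of $g$ that already has a double zero at $\Xi_F$ (available because $g>0$ to the left of $\Xi_F$ with at least a double zero there) and mollifying it — whence $\mathrm{id}<H\le F$, $H(\Xi_F)=\Xi_F$, $H'=1+\phi'>0$ (take $\phi$ small so $\phi'>-1$), and $H$ convex. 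In every case we obtain a $\delta_2>0$ and a convex $\clC^2$ piece on $[\Xi_F-\delta_2,\Xi_F]$ with $\mathrm{id}<H\le F$ on the open interval, $H(\Xi_F)=\Xi_F$, and $H'(\Xi_F)>0$, which will give properties~(\ref{enu:H'pos}) and~(\ref{enu:Hconvex}).

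\emph{Interpolation, gluing, and the main obstacle.} On $[\delta_1,\Xi_F-\delta_2]$ the gap $g=F-\mathrm{id}$ is bounded below by a positive constant, so the open strip $\{\,u<y<F(u)\,\}$ is uniformly thick there; I would route through it a strictly increasing $\clC^2$ function whose $2$-jet at $u=\delta_1$ equals $(f\delta_1,f,0)$ and whose $2$-jet at $u=\Xi_F-\delta_2$ matches the near-$\Xi_F$ piece — for instance a $\clC^2$ Hermite-type interpolant of $\mathrm{id}$ plus a small positive interior bump, monotone because it runs from a small value up to a value near $\Xi_F$. Matching the value and first two derivatives at the two joins makes $H$ globally $\clC^2$ on $[0,\Xi_F]$; since $H(0)=0$, $H(\Xi_F)=\Xi_F$ and $H$ is increasing, $H$ maps $[0,\Xi_F]$ into $[0,\Xi_F]$ and \cref{eq:HbelowF} holds; and $\delta\coloneqq\min(\delta_1,\delta_2,\tfrac13\Xi_F)\in(0,\tfrac12\Xi_F)$ serves both~(\ref{enu:Hstartslinear}) and~(\ref{enu:Hconvex}). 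The only genuine difficulty I anticipate is the convex piece near $\Xi_F$ in the degenerate subcase $F'(\Xi_F)=1$, $F''(\Xi_F)=0$ — where $F$ need not be convex near $\Xi_F$, $g$ may vanish faster than any polynomial, and one is \emph{forced} to take $H'(\Xi_F)=1$ (since $H\le F$ near $\Xi_F$ forces $H'(\Xi_F)\ge F'(\Xi_F)$ and $H>\mathrm{id}$ forces $H'(\Xi_F)\le1$), so $H$ must hug $F$ from below to second order — together with carrying $\mathrm{id}<H\le F$, $H'>0$ and $\clC^2$-smoothness through both joins at once; the rest is routine bookkeeping. (In the cases of interest this subcase does not arise — e.g.\ $\Xi_F=1$ with $F'(1)<1$ in the setting of \cref{prop:majorityiskpp}.)
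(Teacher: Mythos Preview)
Your overall plan—linear near $0$, convex near $\Xi_F$, $\clC^2$ interpolant in between—matches the paper's exactly. The difference is in building the convex piece near $\Xi_F$: the paper does it in one stroke, with no case split on $(F'(\Xi_F),F''(\Xi_F))$. Reflect the gap $g(u)=F(u)-u$ about $\Xi_F$ by $\tilde G(s)=g(\Xi_F-s)$, set $k(s)=\min_{[s,\,\Xi_F/2]}\tilde G$ and $h(s)=\int_0^s\!\int_0^y k(z)\,\dif z\,\dif y$. Then $h$ is automatically $\clC^2$, convex ($h''=k\ge 0$), strictly positive for $s>0$, has $h(0)=h'(0)=0$, and satisfies $h(s)\le\tfrac12 s^2 k(s)<\tilde G(s)$ (using $s\le\tfrac12\Xi_F\le\tfrac12$ and $k\le\tilde G$). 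Reflecting back, $H_1(u)\coloneqq h(\Xi_F-u)+u$ is convex with $H_1(\Xi_F)=\Xi_F$, $H_1'(\Xi_F)=1>0$, and $u<H_1(u)<F(u)$ near $\Xi_F$—uniformly, no matter how fast $g$ vanishes at $\Xi_F$.

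Your degenerate subcase $F'(\Xi_F)=1$, $F''(\Xi_F)=0$ is exactly where this device is needed, and your ``convex piecewise-linear minorant, then mollify'' does not deliver it as written. If the piecewise-linear minorant $p$ has a genuine double zero at $\Xi_F$, its last segment is identically zero and it is not strictly positive arbitrarily close to $\Xi_F$; if instead segments accumulate at $\Xi_F$, fixed-scale mollification yields $\phi(\Xi_F)=\int p(\Xi_F-y)\rho(y)\,\dif y>0$, destroying the boundary value you need. This is fixable—most simply by the double-integral construction above—but as it stands it is a gap. Your remark that the subcase does not arise for majority voting is true but does not help: the lemma is stated under the general standing hypotheses on $F$, which do not exclude it.
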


For the rest of this section, we fix $f\in(1,F'(0))$ so close to $F'(0)$, and \begin{equation}B\in(0,\gamma^{-1})\label{eq:Bdef}\end{equation} 
so close to $\gamma^{-1}$, that the weakening \begin{equation}\nu>\zeta\Sigma(f,B^{-1})\label{eq:nucondm}\end{equation} of \cref{eq:nucond} holds. 
With these choices, we also fix $\delta\in (0,\frac12\Xi_F)$ and the function $H$ from \cref{lem:HfromF}.

The next lemma is the key to building the subsolution.

\begin{lem}\label{lem:startthesubsolution}     
There is a function $w:[0,\infty)\to [0,\Xi_F]$ with the following properties:
    \begin{enumerate}
        \item The function $w$ is strictly increasing, continuous, and piecewise $\clC^2$. At each point $x$ at which $w$ is not $\clC^2$, it satisfies $w'(x-)< w'(x+)$.
        \item The function $w$ is linear on a nonempty interval starting at $0$.\label{enu:starts-linear}
        \item We have \begin{equation}w(x) = H(w(Bx))\qquad\text{for each }x\ge B^{-1}.\label{eq:wrecursion}\end{equation}
        \item For each $x\in (0,\infty)$ at which $w$ is differentiable, we have
            \begin{equation}\label{eq:wissubsol}
                0\le \zeta[H(w(Bx))-w(x)]+\nu x w'(x).
            \end{equation}
    \end{enumerate}
\end{lem}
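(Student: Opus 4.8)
\textbf{Proof proposal for Lemma~\ref{lem:startthesubsolution}.}

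The plan is to build $w$ by the ``telescoping'' construction suggested by the functional equation \cref{eq:wrecursion}: first define $w$ on the base interval $[0,B^{-1}]$ as an explicit profile, and then extend it to all of $[0,\infty)$ by iterating the rule $w(x) = H(w(Bx))$ on the dyadic-in-$B$ intervals $[B^{-(n+1)}, B^{-n}]$. Concretely, on $[0,B^{-1}]$ I would take $w$ to be a solution of the autonomous-type ODE coming from a change of variables: writing $w(x)=W(\log_\, x)$ or, more precisely, $w(x) = \phi(x)$ where $\phi$ is chosen so that the subsolution inequality \cref{eq:wissubsol} holds with equality (or strict inequality) on the interior. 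The natural candidate, in analogy with the local-PDE argument (see the proof of \cref{prop:dec1304}, where $w(y) = W(\log y)$ and $W' = \frac1B f(W)$), is to let $W$ solve an ODE of the form $W'(Y) = \frac{\zeta}{\nu} H(W(Y)) \cdot(\text{something})$; since here the relevant balance is $\nu x w'(x) + \zeta[H(w(Bx)) - w(x)] \ge 0$, and on the base interval $H(w(Bx))$ is not yet pinned by the recursion, I would instead just choose $w$ on $[0,B^{-1}]$ to be \emph{linear} near $0$ (using property \ref{enu:Hstartslinear} of $H$, that $H(u)=fu$ for small $u$, which makes the recursion compatible with linearity: $fu$ at $x$ matches $H$ applied to $w(Bx)$ when both are in the linear regime) and then continue it smoothly and strictly increasingly up to the value $w(B^{-1})$ that will be forced by consistency.

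The key steps, in order, are: (1) On $[0,\delta']$ for some small $\delta'>0$, set $w(x) = cx$ for a constant $c>0$; choose $c$ small enough (and $\delta'$ accordingly) that $w(Bx)\le \delta$ on this range so that $H(w(Bx)) = f w(Bx) = fB\,cx$; then check \cref{eq:wissubsol} reduces to $\nu c x + \zeta[fBcx - cx] \ge 0$, i.e. $\nu + \zeta(fB-1)\ge 0$, which holds because the assumption \cref{eq:nucondm}, $\nu > \zeta\Sigma(f,B^{-1}) = \zeta\sup_\omega (1-f B^{\omega})/\omega \ge \zeta(1-fB)$ (taking $\omega=1$), is exactly $\nu + \zeta(fB-1) > 0$. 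This gives properties (1), \ref{enu:starts-linear}, and \ref{eq:wissubsol} on the linear piece. (2) Extend $w$ from $[0,B^{-1}]$ to $[B^{-1},\infty)$ by declaring $w(x) := H(w(Bx))$ recursively; since $H$ is $\clC^2$, increasing, and maps $[0,\Xi_F]$ into itself, $w$ stays in $[0,\Xi_F]$, is piecewise $\clC^2$, and is strictly increasing (as a composition of increasing maps, using $H'>0$ from property \ref{enu:H'pos}). (3) Verify \cref{eq:wissubsol} on each piece $x > B^{-1}$: here $w(x) = H(w(Bx))$ exactly, so \cref{eq:wissubsol} becomes $\nu x w'(x) \ge 0$, which is immediate from strict monotonicity of $w$. (4) Check the corner condition $w'(x-) < w'(x+)$ at the junction points $x = B^{-n}$: differentiating the recursion, $w'(x+) = B\,H'(w(Bx))\,w'(Bx+)$ just to the right, and one compares with the value coming from the previous interval; I would arrange the base profile on $[0,B^{-1}]$ so that the one-sided derivatives satisfy the required convexity-type jump, using that $H'>0$ and, near $\Xi_F$, the convexity of $H$ from property \ref{enu:Hconvex} to control the behavior as $x\to\infty$ (where $w(Bx)\to\Xi_F$ and the recursion has $\Xi_F$ as an attracting fixed point since $H(\Xi_F)=\Xi_F$).

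The main obstacle I anticipate is Step (4): making sure the piecewise-defined $w$ actually has \emph{upward} corners (slope increasing across each junction), rather than downward ones, since \cref{prop:comparisonprinciple}'s condition \cref{eq:derivcond} for a subsolution requires $w'(x-) < w'(x+)$. This is a genuine constraint on how the base profile on $[0,B^{-1}]$ is chosen: the value $w(B^{-1})$ and the derivative $w'(B^{-1}-)$ must be compatible with the recursion so that, after one application of $H$ (which near $\Xi_F$ is convex and near $0$ is linear), the resulting slopes line up correctly at every subsequent junction $B^{-n}$. I would handle this by choosing the base profile not freely but as the restriction of a single globally-defined $\clC^1$ (indeed piecewise $\clC^2$) solution of the ODE $\nu x w'(x) = \zeta[w(x) - H(w(Bx))]$-type relation where possible; alternatively, by taking $w$ on $[0,B^{-1}]$ to already be of the form $W(\log_{B^{-1}} x)$ for a traveling-wave-like $W$ solving a delay ODE, so that the recursion \cref{eq:wrecursion} holds automatically and smoothly (no corners at all in the interior), with corners appearing only if forced — and then the $\clC^2$ matching across $B^{-n}$ is automatic. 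The details of showing such a $W$ exists and has the right monotonicity and limits ($w(0+) = 0$ from the linear piece, $w(\infty) = \Xi_F$ from $H(\Xi_F)=\Xi_F$ and $H'(\Xi_F)>0$ giving local attraction, combined with monotonicity) are where the real work lies, but they parallel the local-case constructions in \cref{sec:localPDE} and should go through; the convexity hypothesis \ref{enu:Hconvex} on $H$ near $\Xi_F$ is presumably exactly what is needed to rule out overshoot and guarantee the corner signs.
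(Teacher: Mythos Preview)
Your overall architecture---define $w$ on a base interval and extend by iterating $w(x)=H(w(Bx))$---matches the paper, and your Steps (2) and (3) are correct. But there is a genuine gap in the construction of the base profile. You verify \cref{eq:wissubsol} on the linear piece $[0,\delta']$ and trivially on $x\ge B^{-1}$, but you never say what $w$ is on the intermediate region $[\delta',B^{-1}]$ beyond ``continue it smoothly and strictly increasingly,'' and you do not verify \cref{eq:wissubsol} there. Moreover, continuity of the recursive extension at $x=B^{-1}$ forces the two-point constraint $w(B^{-1})=H(w(1))$; if $w(1)\le\delta$ so $H(w(1))=f\,w(1)$, this reads $w(B^{-1})=f\,w(1)$, which a generic smooth increasing interpolant will not satisfy. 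Your fallback of solving the delay relation $w(x)=H(w(Bx))$ globally does not rescue property~\ref{enu:starts-linear}: in the regime $H(u)=fu$ that relation forces $w(x)\propto x^{\omega_0}$ with $\omega_0=-\log_B f$, which is not linear near $0$ unless $\omega_0=1$. The local-case ODE from \cref{prop:dec1304} does not transplant either, because here the inequality couples $w(x)$ to $w(Bx)$.

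The paper's resolution is to take the base profile in two explicit pieces: on $[1,B^{-1}]$, a multiple of $x^{\omega_0}$ (so that the recursion $w\mapsto H\circ w(B\,\cdot)$, which acts as multiplication by $fB^{\omega_0}=1$ in the linear regime of $H$, matches it exactly and the join at $B^{-1}$ is $\clC^2$); and on $[0,1]$, a rescaled function $v_\omega$ supplied by an auxiliary \cref{lem:build-sub-inductive}. That lemma builds, by an inductive argument in $\omega$, a function that is linear near $0$, has the form $\tfrac{m_\omega}{\omega}(x^\omega-1)+1$ on $[B,1]$, and satisfies the linearized inequality $0\le f\,v_\omega(Bx)-v_\omega(x)+\tfrac{\nu}{\zeta}\,x\,v_\omega'(x)$ throughout $[0,1]$. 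The slope control $m_\omega<\omega_0$ coming out of that lemma is exactly what forces the corner at $x=1$ to be upward. Crucially, the lemma needs the full hypothesis $\inf_{\omega\in(0,1]}\bigl(fB^{\omega}-1+\tfrac{\nu}{\zeta}\omega\bigr)>0$, i.e.\ all of \cref{eq:nucondm}, not merely the $\omega=1$ instance you invoke in Step (1).

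Finally, your Step (4) misidentifies the main obstacle. With the power-law choice on $[1,B^{-1}]$, the extension is $\clC^2$ at every $x=B^{-n}$ for $n\ge 1$, so there are no corners there to worry about; the only non-$\clC^2$ points live inside $[0,1]$ (from the construction of $v_\omega$) and at $x=1$. The convexity of $H$ near $\Xi_F$ is not used here at all---it is used later, in \cref{lem:large-x}, to control $w''$ for large $x$ when the Laplacian is reinstated.
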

\begin{rem}
Before we prove \cref{lem:startthesubsolution}, we note two immediate consequences:
\begin{enumerate}\item For every $x\in (0,\infty)$ at which $w$ is differentiable, we have
\begin{equation}
\begin{aligned}
    \zeta[F(w(\gamma^{-1}x))-w(x)] + \nu x w'(x) &\ge \zeta[H(w(\gamma^{-1}x))-w(x)]+\nu x w'(x)\\
                                                                     &%
                                                                     > \zeta[H(w(Bx))-w(x)]+\nu x w'(x)\ge0,
\end{aligned}
\label{eq:therealinequality}
\end{equation}
with the first inequality holding by \cref{eq:HbelowF} and the second because $H$ and $w$ are strictly increasing and $\gamma^{-1}>B$ by the choice of $B$.
\item Since $H$ is increasing on $(0,\Xi_F)$ and $H(u)>u$ for all $u\in (0,\Xi_F)$ while $H(\Xi_F)=\Xi_F)$, %
    \cref{eq:wrecursion} implies that %
    \begin{equation}\label{eq:wlimitatinfty}
        \lim_{x\to+\infty} w(x) = \Xi_F.
    \end{equation}
\end{enumerate}

\end{rem}

\begin{proof}[Proof of \cref{lem:startthesubsolution}.]
As $f>1$ and $0<B<\gamma^{-1}<1$, we have
    \begin{equation}\omega_0: = -\log_B f=-\frac{\ln f}{\ln B}>0. \label{eq:omega0def}
    \end{equation} 
We can apply \cref{lem:build-sub-inductive} below, with $\nu\mapsfrom \nu/\zeta$.
In that case, the assumption~\cref{eq:kappagt0} in that lemma takes the form
\begin{equation}\label{dec1912}
\inf_{\omega\in(0,1]}\Big(fB^{\omega}-1+\frac{\nu}{\zeta}\omega\Big)>0. 
\end{equation}
It is satisfied by~\cref{eq:nucondm} since for any $\omega\in[0,1]$ we have
\begin{equation}\label{dec1914}
fB^{\omega}-1+\frac{\nu}{\zeta}\omega>fB^{\omega}-1+\Sigma(f,B^{-1})\omega\ge 0
\end{equation}
by the definition of $\Sigma(f,B^{-1})$.
The aforementioned lemma, for each $\omega\in(0,1]$ fixed, gives us a continuous
function $v_\omega\colon [0,1]\to [0,1]$ such that $v_\omega$ is linear in a neighborhood of~$0$, 
is piecewise $\clC^2$, and has an upward jump of the derivative 
at every point where $w_\omega$ is not~$\clC^2$. Moreover, we have both
    \begin{equation}\label{eq:vomegaend}
        v_\omega(x) = \frac{m_\omega}{\omega}(x^\omega-1)+1\qquad\text{for all }x\in [B,1],
    \end{equation}
with $m_\omega\in\RR$ as in \cref{eq:momegacond},    and 
\begin{equation}\label{dec1806}
0\le \zeta[f v_\omega(Bx)-v_\omega(x)]+\nu xv'_\omega(x)\qquad\text{for all }x\in [0,1].
\end{equation}
Choosing $\alpha\in(0,1)$ %
in the statement of \cref{lem:build-sub-inductive} 
sufficiently close to $1$ and taking $\omega<\alpha\omega_0$, 
we can furthermore use \cref{eq:momegacond} to ensure that
    \begin{equation}\label{eq:readytoglue}
        m_\omega <\alpha^{-1}\omega< \omega_0. %
    \end{equation}
    Moreover, we have %
    \begin{equation}\label{eq:condglues}
        x^{\omega_0}\le \frac{m_\omega}{\omega}(x^\omega-1)+1\overset{\cref{eq:vomegaend}}=v_\omega(x)\qquad\text{for all }x\in [B,1].
    \end{equation}
    Now, we recall $\delta\in(0,\frac{1}{2}\Xi_F)$ from \cref{lem:HfromF}, and define
    \[
        w(x)=\begin{cases}
            B^{\omega_0}\delta v_\omega(x),&x\in [0,1];\\
            B^{\omega_0}\delta x^{\omega_0},&x\in [1,B^{-1}].\\
        \end{cases}
    \]
We extend $w$ to a function on $[0,\infty)$ by inductively imposing \cref{eq:wrecursion}.

We note that  
\[
w(1-)=B^{\omega_0}\delta v_\omega(1) = B^{\omega_0}\delta = w(1+),
\]
and
\[
w'(1-)= B^{\omega_0}\delta v_\omega'(1) = 
B^{\omega_0}\delta m_\omega \overset{\cref{eq:readytoglue}}
< B^{\omega_0}\delta\omega_0 = w'(1+).
\] %
Moreover, for $x\in [B^{-1},B^{-2}]$, we have
\[
w(Bx)=B^{\omega_0}
\delta(Bx)^{\omega_0}\le B^{\omega_0}\delta(B^{-1})^{\omega_0}=\delta.
\]
Thus,~\cref{enu:Hstartslinear} in \cref{lem:HfromF} implies that for $x\in [B^{-1},B^{-2}]$
\begin{equation}\label{dec1804}
 w(x) = H(w(Bx)) = f B^{\omega_0}\delta (Bx)^{\omega_0}
     = B^{\omega_0}\delta x^{\omega_0},
 \end{equation}
by the definition \cref{eq:omega0def}
of $\omega_0$. In particular, $w$ is smooth at $x=B^{-1}$.
Since $H$ is $\clC^2$, this, together with the extension formula~\cref{eq:wrecursion},
implies that $w$ is $\clC^2$ on $(1,\infty)$. Therefore,~$w$ is piecewise $\clC^2$ and its derivative has an upward jump at every point at which it is not~$\clC^2$.

Let us also note that 
\begin{equation}\label{dec1808}
w(Bx)\le \delta B^{\omega_0}\le\delta,~~\hbox{for all $x\in[0,B^{-1}]$},
\end{equation}
from which we deduce that
\begin{equation}\label{dec1810}
H(w(Bx))=fw(Bx),~~\hbox{for all $x\in[0,B^{-1}]$},
\end{equation}

Now, first, \cref{eq:wissubsol} for $x\ge B^{-1}$ is clear from the definition \cref{eq:wrecursion}
of $w$ %
since $w(x)$ is increasing.
It is also true for $x\le 1$ by~\cref{dec1806,dec1810}. 
Finally, \cref{eq:wissubsol} also holds for
$x\in [1,B^{-1}]$ since, by  
\cref{eq:omega0def,eq:vomegaend,eq:condglues,dec1810}, 
we have for such $x$ that %
\begin{align*}
\zeta[H(w(Bx))-w(x)]+\nu x w'(x)&=
\zeta[f w(Bx) - w(x)]+\nu x w'(x) \\
&\ge \zeta[fB^{\omega_0}\delta (Bx)^{\omega_0}-B^{\omega_0}\delta x^{\omega_0}]
=0. 
\qedhere
\end{align*} %
\end{proof}

Next, we prove the following estimate on the oscillation of $w$ on dyadic intervals 
far from the origin.
\begin{lem}\label{lem:lousy-harnack}
    For any $\kappa>0$ and $A\in(1,\infty)$, we have constants $C,M\in(0,\infty)$ such that, whenever $x,y\ge M$ are such that $|\log x-\log y|\le \log A$, we have
    \begin{equation}
        \label{eq:wprimesclose}
        |\log w'(y)-\log w'(x)|\le C+\kappa\log y.
    \end{equation}
\end{lem}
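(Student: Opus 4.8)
The goal is to control the multiplicative oscillation of $w'$ on a dyadic annulus $\{M \le x \le AM\}$ by differentiating the recursion \cref{eq:wrecursion} and iterating it backwards until the argument enters the region where $w$ is explicitly understood. First I would differentiate $w(x) = H(w(Bx))$ to get $w'(x) = B\,H'(w(Bx))\,w'(Bx)$, hence
\[
\log w'(x) = \log B + \log H'(w(Bx)) + \log w'(Bx),
\]
valid for $x \ge B^{-1}$ where $w$ is $\clC^2$ (as established in the proof of \cref{lem:startthesubsolution}). Subtracting the same identity for $y$ gives
\[
\log w'(x) - \log w'(y) = \bigl[\log H'(w(Bx)) - \log H'(w(By))\bigr] + \bigl[\log w'(Bx) - \log w'(By)\bigr].
\]
Since $|\log(Bx) - \log(By)| = |\log x - \log y| \le \log A$ is preserved under the map $z \mapsto Bz$, the difference of the $\log w'$ terms can be peeled off one power of $B$ at a time, and the error accumulates from the $\log H'$ terms.

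\textbf{Key steps.} (i) Iterate the identity above $N$ times, where $N = N(x)$ is chosen as the smallest integer with $B^N x \le B^{-1}$ (so that after $N$ steps the argument lands in $[1, B^{-2}]$, say, a fixed compact set where $w$ is $\clC^2$ and $w'$ is bounded above and below by explicit positive constants depending only on $B, \omega_0, \delta$). Note $N \le \log x / \log(B^{-1}) + O(1)$, which is where the $\kappa \log y$ term on the right-hand side of \cref{eq:wprimesclose} comes from: we will show each of the $N$ increments contributes at most a fixed amount, and then choose parameters so the per-step contribution is at most $\kappa / \log(B^{-1})$.

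\textbf{The $\log H'$ increments.} The main quantitative input is bounding $\bigl|\log H'(w(B^{j+1}x)) - \log H'(w(B^{j+1}y))\bigr|$ for each $j = 0, \dots, N-1$. Here I would split into two regimes. Near the top, when $B^{j+1}x$ (hence also $B^{j+1}y$, since they differ by a bounded log-factor) is large, $w$ is close to $\Xi_F$ and $H'$ is continuous and bounded below by $H'(\Xi_F) > 0$ (\cref{lem:HfromF}\ref{enu:H'pos}); and crucially, by \cref{eq:wrecursion} the values $w(B^{j+1}x), w(B^{j+1}y)$ approach $\Xi_F$ geometrically, so $|w(B^{j+1}x) - w(B^{j+1}y)|$ is summable in $j$ — thus the total contribution from this regime is bounded by a constant $C$ independent of $x, y$. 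Near the bottom, when $B^{j+1}x \le \delta$ (the linear region of $H$, \cref{lem:HfromF}\ref{enu:Hstartslinear}), we have $H'(w(B^{j+1}x)) = f = H'(w(B^{j+1}y))$ exactly, so those increments vanish. The only problematic increments are the $O(1)$-many values of $j$ for which $B^{j+1}x$ lies in a fixed bounded "transition window" where $w$ is neither tiny nor near $\Xi_F$; on any such fixed compact set away from $0$, $\log w$ is bounded and $\log H'$ is uniformly continuous, so each such increment is bounded by a constant, and there are only $O(1)$ of them. Summing the three contributions gives a bound $C + (\text{per-step error}) \cdot N \le C + \kappa \log y$ after absorbing $O(1)$ terms into $C$ and noting $\log y = \log x + O(1)$ on the annulus.

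\textbf{Main obstacle.} The delicate point is the geometric-convergence estimate $|w(B^{j+1}x) - w(B^{j+1}y)| \lesssim \rho^{\,j}$ for some $\rho < 1$ in the top regime: this requires showing that the recursion $w \mapsto H(w(B\cdot))$ contracts differences near the fixed point $\Xi_F$, which uses that $H'$ is continuous with $H'(\Xi_F) < \infty$ and that $B < 1$ — one gets $|w(Bx) - w(By)| \le (\sup H') |w(B^2 x) - w(B^2 y)|$, but $\sup H'$ need not be $< 1$, so the contraction must instead come from combining the $H'$ factor with the fact that in the top regime the relevant quantity is really $\Xi_F - w(B^j x)$, which decays because $w(B^j x) \to \Xi_F$; the convexity of $H$ near $\Xi_F$ (\cref{lem:HfromF}\ref{enu:Hconvex}) is exactly what makes $\Xi_F - H(u) \le c\,(\Xi_F - u)$ impossible to violate and ensures this decay is controlled. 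Getting this contraction/decay clean is the crux; everything else is bookkeeping with the iterated identity.
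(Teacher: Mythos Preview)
Your iteration of the differentiated recursion $w'(x)=B\,H'(w(Bx))\,w'(Bx)$ is exactly the paper's starting point, but your treatment of the ``top regime'' has a genuine gap. You claim the increments $\bigl|\log H'(w(B^{j+1}x))-\log H'(w(B^{j+1}y))\bigr|$ are summable in $j$ because $w(B^{j+1}x)\to\Xi_F$ \emph{geometrically}. Geometric decay of $\Xi_F-w(z)$ as $z\to\infty$ would follow from the recursion $\Xi_F-w(z)=H(\Xi_F)-H(w(Bz))$ only if $H'(\Xi_F)<1$; but \cref{lem:HfromF} guarantees merely $H'(\Xi_F)>0$, and in fact the paper's construction of $H$ (through the function $h$ of \cref{lem:cvxbelow}, which satisfies $h'(0)=0$) produces $H'(\Xi_F)=1$. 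In that case $\Xi_F-w(z)$ decays only sub-geometrically and your top-regime sum is not bounded uniformly in $x$. The convexity of $H$ near $\Xi_F$ gives only $\Xi_F-H(u)\le H'(\Xi_F)(\Xi_F-u)$, which is vacuous as a contraction when $H'(\Xi_F)=1$; so your ``main obstacle'' paragraph does not close the gap.

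The paper's argument is simpler and sidesteps this entirely. It does not attempt summability: it uses only continuity of $H'$ at $\Xi_F$ together with $w\to\Xi_F$ to get, for any $\alpha>0$, an $M$ with $H'(w(y))/H'(w(x))\le B^{-\alpha}$ whenever $x,y\ge M$. Iterating outward from the base interval $[M,B^{-1}M]$ then gives $w'(y)/w'(x)\le B^{-k\alpha}\xi$ on $[B^{-k}M,B^{-k-1}M]$, a fixed small multiplicative loss $B^{-\alpha}$ per step; since $k\lesssim\log(y)/\log(B^{-1})$, the accumulated loss is $\lesssim y^{\alpha}$, and taking $\alpha$ proportional to $\kappa$ yields \cref{eq:wprimesclose}. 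In other words, the $\kappa\log y$ term \emph{is} the top-regime contribution---many steps each costing an arbitrarily small amount---whereas your decomposition (constant from the top, zero from the linear part, $O(1)$ from the transition) would, if it worked, give the strictly stronger bound $|\log w'(y)-\log w'(x)|\le C$, which is neither what is claimed nor what you have justified.
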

\begin{proof}
    Fix $\alpha>0$, to be chosen later.
Since $H'(\Xi_F)>0$ by \cref{lem:HfromF}(\ref{enu:H'pos}) and $\lim\limits_{x\to\infty} w(x)=\Xi_F$ by \cref{eq:wlimitatinfty}, if $M\in (0,\infty)$ is sufficiently large, then we have
    \begin{equation}\label{dec1812}
        \frac{H'(w(y))}{H'(w(x))}\le B^{-\alpha}\qquad\text{for all }x,y\ge M.
    \end{equation}
Differentiating \cref{eq:wrecursion} and using~\cref{dec1812}, 
we obtain, whenever $x,y\ge M$, that
    \begin{equation}\label{dec1814}
        \frac{w'(B^{-1}y)}{w'(B^{-1}x)}=\frac{H'(w(y))w'(y)}{H'(w(x))w'(x)}\le B^{-\alpha}\frac{w'(y)}{w'(x)}.
    \end{equation}
    Now let \[\xi = \sup_{x,y\in [M,B^{-1}M]}\frac{w'(y)}{w'(x)}.\]
Using induction in~\cref{dec1814}
implies that, for each $k\in\NN$, we have
    \[
        \frac{w'(B^{-k}y)}{w'(B^{-k}x)}\le B^{-k\alpha}\xi\qquad\text{for all }x,y\in [M,B^{-1}M],
    \]
    which means that
    \[
        \frac{w'(y)}{w'(x)}\le B^{-k\alpha}\xi\le \left(\frac{y}{BM}\right)^{\alpha}\xi\qquad\text{for all }x,y\in [B^{-k}M,B^{-k-1}M].
    \]
    By the triangle inequality, this implies that
    \[
        \frac{w'(y)}{w'(x)}\le \left(\frac{y}{BM}\right)^{2\alpha}\xi^2\qquad\text{for all }x,y>M\text{ such that }|\log x-\log y|\le \log B^{-1},
    \]
    and hence that, for any $N\in\NN$,
    \[
        \frac{w'(y)}{w'(x)}\le \left(\frac{y}{BM}\right)^{2N\alpha}\xi^2\qquad\text{for all }x,y>M\text{ such that }|\log x-\log y|\le \log B^{-N}.
    \]
    Choosing $N$ large enough that $B^{-N}\ge A$, and then $\alpha=\kappa/(2N)$, this gives us \cref{eq:wprimesclose}.
\end{proof}

The next lemma proves that the subsolution property holds far from the origin.
\begin{lem}\label{lem:large-x}
    There is an $M<\infty$ such that
    \begin{equation}
        0\le \frac12\Delta w(x)+ \zeta[F(w(\gamma^{-1}x))-w(x)]\qquad\text{for all }x\ge M.\label{eq:w-goal}
    \end{equation}
\end{lem}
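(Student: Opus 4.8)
The plan is to prove \cref{lem:large-x} by showing that far from the origin the term $\zeta[F(w(\gamma^{-1}x))-w(x)]$ is strictly positive and grows at a controlled rate, while the ``bad'' term $\frac12\Delta w(x)$ is negligible on that scale. First I would use \cref{eq:therealinequality}, which (at points of differentiability) already gives $\zeta[F(w(\gamma^{-1}x))-w(x)]\ge -\nu xw'(x) + \bigl(\zeta[H(w(Bx))-w(x)]+\nu x w'(x)\bigr)$ with the bracketed quantity nonnegative; more usefully, combining the two displayed inequalities in that remark, there is a strict gap coming from $\gamma^{-1}>B$: since $H$ and $w$ are strictly increasing, $\zeta[F(w(\gamma^{-1}x))-w(x)]\ge \zeta[H(w(\gamma^{-1}x))-H(w(Bx))] + \zeta[H(w(Bx))-w(x)]$, and I would lower-bound the first difference using $H'(\Xi_F)>0$ together with $w(\gamma^{-1}x)-w(Bx)\ge (\gamma^{-1}-B)\,x\,\inf_{[Bx,\gamma^{-1}x]}w'$. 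The upshot is a lower bound of the rough form $\zeta[F(w(\gamma^{-1}x))-w(x)]\gtrsim c\, x\, w'(x)$ for $x$ large, for some $c>0$.

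Next I would estimate $w''(x)$. Differentiating the recursion \cref{eq:wrecursion} twice gives $w''(x)=B^2\bigl(H''(w(Bx))w'(Bx)^2 + H'(w(Bx))w''(Bx)\bigr)$, so $|w''|$ on $[B^{-k}M,B^{-k-1}M]$ is controlled in terms of $|w''|$ and $|w'|^2$ one dyadic block closer to the origin, with the contraction factor $B^2 H'(w(Bx))$ which is close to $B^2 H'(\Xi_F)<1$ for large $x$ (after shrinking via \cref{enu:H'pos} of \cref{lem:HfromF}, or at worst comparable). Feeding in \cref{lem:lousy-harnack} to say that $w'(y)\approx w'(x)$ up to a factor $\exp(C+\kappa\log y)=C'y^{\kappa}$ on any fixed dyadic window, I can turn the second-order recursion into the bound $|w''(x)|\le C'' x^{-2+\kappa'} w'(x)$ (or similar), where $\kappa'$ can be made as small as we like by choosing $\kappa$ and $\alpha$ small in \cref{lem:lousy-harnack}. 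In particular $|w''(x)|=o(x\,w'(x))$ as $x\to\infty$.

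Combining the two estimates: for $x\ge M$ with $M$ large enough, $\frac12\Delta w(x)+\zeta[F(w(\gamma^{-1}x))-w(x)]\ge c\,x\,w'(x) - C'' x^{-2+\kappa'}w'(x) = \bigl(c\,x - C'' x^{-2+\kappa'}\bigr)w'(x)\ge 0$, since $w'>0$ and the parenthesized quantity is positive for large $x$. This gives \cref{eq:w-goal}. I would also remark that $w$ is $\clC^2$ on $(1,\infty)$ (established in the proof of \cref{lem:startthesubsolution}), so there is no issue with points of non-differentiability in the range $x\ge M$, and the inequality holds classically.

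The main obstacle is the control of $w''$: because $w$ is built by an infinite sequence of rescalings and compositions with $H$, a naive bound on $w''$ could blow up geometrically across dyadic blocks. The point of \cref{lem:lousy-harnack} (and the reason it is phrased with the $\kappa\log y$ slack) is precisely to show $w'$ does not vary too wildly, so that the recursion for $w''$ has effective contraction factor $B^2H'(w(Bx))\to B^2 H'(\Xi_F)$, which is $<1$ once $H'(\Xi_F)<B^{-2}$; since $B$ can be taken close to $\gamma^{-1}$ and $H$ can be chosen in \cref{lem:HfromF} with $H'(\Xi_F)$ controlled, this can be arranged, and even if $B^2H'(\Xi_F)\ge 1$ the extra $x^{-2}$ decay dominates the at-most-polynomial growth of $w'$ after choosing $\kappa$ small. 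Making the bookkeeping of these competing polynomial rates precise — tracking how $\kappa$, $\alpha$, $N$, and the size of $M$ depend on each other — is the one genuinely fiddly part; everything else is a direct consequence of the strict inequality $\gamma^{-1}>B$ and the construction in \cref{lem:startthesubsolution}.
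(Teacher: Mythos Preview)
Your strategy is correct and would succeed, but it differs from the paper's argument in one key respect and contains a couple of minor slips.

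\textbf{The main difference: convexity versus two-sided control.} To bound the Laplacian term, you iterate the second-derivative recursion
\[
w''(x)=B^{2}\bigl[H''(w(Bx))\,w'(Bx)^{2}+H'(w(Bx))\,w''(Bx)\bigr]
\]
and try to control $|w''|$ by tracking both summands. The paper instead uses property~\ref{enu:Hconvex} of \cref{lem:HfromF}: for large $x$ we have $w(Bx)\in[\Xi_F-\delta,\Xi_F]$, so $H''(w(Bx))\ge 0$ and the first summand can simply be dropped, yielding the one-sided inequality $w''(x)\ge B^{2}H'(w(Bx))w''(Bx)$. Dividing by the first-derivative recursion $w'(x)=BH'(w(Bx))w'(Bx)$ gives
\[
\frac{w''(B^{-1}y)}{w'(B^{-1}y)}\ge B\,\frac{w''(y)}{w'(y)},
\]
and a one-line induction then gives $w''(x)\ge -\tfrac{C}{x}w'(x)$ with no bookkeeping of $(w')^{2}$ terms and no appeal to \cref{lem:lousy-harnack} at this step. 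This is precisely why the convexity clause was built into the construction of $H$; you did not use it.

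\textbf{Minor corrections.} First, your claimed exponent $|w''(x)|\le C''x^{-2+\kappa'}w'(x)$ is too strong; the iteration you describe actually yields $|w''(x)|\lesssim x^{-1+\kappa'}w'(x)$. Similarly, your lower bound $\zeta[F(w(\gamma^{-1}x))-w(x)]\gtrsim c\,x\,w'(x)$ should carry a small loss from \cref{lem:lousy-harnack}, giving $\gtrsim c\,x^{1-\kappa}w'(x)$ (the paper writes $x^{0.9}$). Neither slip is fatal, since $x^{1-\kappa}$ still dominates $x^{-1+\kappa'}$. Second, your concern that one must arrange $H'(\Xi_F)<B^{-2}$ is unfounded: because $H(u)>u$ on $(0,\Xi_F)$ and $H(\Xi_F)=\Xi_F$, one automatically has $H'(\Xi_F)\le 1<B^{-2}$, so the ``contraction'' always holds. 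Finally, for the nonlocal term the paper takes a slightly shorter route than yours: it applies the recursion \cref{eq:wrecursion} at $\gamma^{-1}x$ to write $H(w(\gamma^{-1}x))=w(x/(\gamma B))$, then uses the mean value theorem on $w$ over $[x,x/(\gamma B)]$, avoiding the need for the lower bound on $H'$ that your splitting requires.
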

\begin{proof}
    First we estimate that
    \begin{equation}
    \begin{aligned}
  F(w(\gamma^{-1} x))-w(x) &\ge H(w(\gamma^{-1}x))-w(x))
        =w( x/(\gamma B))-w(x)
                                      \\
                                      &\ge (1/(B\gamma)-1)x\inf_{[x, x/(\gamma B)]} w'
                                      \ge \frac{x^{0.9}}Cw'(x)\label{eq:nonlocalterm}
\end{aligned}
\end{equation}
for a sufficiently large constant $C$ as long as $x$ is sufficiently large, where the first inequality is by \cref{eq:HbelowF}, the equality is by \cref{eq:wrecursion}, %
the second inequality
is by the mean value theorem, and the third is by \cref{lem:lousy-harnack,eq:Bdef}.

Next, we differentiate \cref{eq:wrecursion} to obtain that, for all $x\ge B^{-1}$, we have
    \begin{equation}
        w'(x) = BH'(w(Bx))w'(Bx).\label{eq:w'recursion}
    \end{equation}
    and
    \begin{equation}
        w''(x) = B^2\left[H''(w(Bx))(w'(Bx))^2 + H'(w(Bx))w''(Bx)\right].\label{eq:w''recursive}
    \end{equation}
    If $x$ is large enough that $w(x)\ge \Xi_F-\delta$, then by \cref{eq:w''recursive} 
    and \cref{lem:HfromF}(\ref{enu:Hconvex}), we have \[w''(B^{-1}x) = B^2\left[H''(w(x))(w'(x))^2 + H'(w(x))w''(x)\right]\ge B^2H'(w(x))w''(x).\] Dividing this inequality by \cref{eq:w'recursion}, we obtain
\[
    \frac{w''(B^{-1}x)}{w'(B^{-1}x)}\ge \frac{Bw''(x)}{w'(x)}.
\]
By induction, this means that, for each $k\ge 1$, we have
\[
\frac{w''(B^{-k}x)}{w'(B^{-k}x)}\ge \frac{B^kw''(x)}{w'(x)}.
\]
Therefore, increasing the constant $C$ if necessary, we obtain %
\begin{equation}w''(x)\ge-\frac C x w'(x)\qquad\text{for all $x$ sufficiently large}.\label{eq:laplacianterm}\end{equation}
Now combining \cref{eq:nonlocalterm} and \cref{eq:laplacianterm}, we have for sufficiently large $x$ that 
\[w''(x)+\zeta[F(w(\gamma^{-1} x))-w(x)]\ge \left(-\frac C x  + \frac{\zeta x^{0.9}}{C}\right)w'(x).\]
For large enough $x$, the right side is positive, so the proof is complete.
\end{proof}
Now we can complete the proofs.
\begin{proof}[Proof of \cref{thm:dichotomy}\textup{(\ref{enu:sub})} and \cref{thm:speed}\textup{(\ref{enu:speedsub})}]
    Let $\underline v(t,x) = w(\e^{-\nu t}\xi x)$ for some $\xi > 0$ to be chosen. Then $v(t,\cdot)$ is piecewise $\clC^2$ for every $t$ and \cref{eq:derivcond} is satisfied. Moreover, we can compute at every point of differentiability that
    \begin{equation}
        \begin{aligned} \frac12&\Delta \underline v(t,x)+\zeta[F(\underline v(\gamma^{-1} x))-\underline v(x)] -\partial_t \underline v(t,x)\\&= \frac12\xi^2\e^{2\nu t} \Delta w(y) + \zeta[F(w(\gamma^{-1} y))-w(y)]+\nu yw'(y),\end{aligned}\label{eq:timederivbecomesspacederiv}
\end{equation}
    where we have defined $y=\e^{-\nu t}\xi x$. %
    Therefore, by \cref{lem:startthesubsolution}(\ref{enu:starts-linear}), \cref{eq:therealinequality}, and \cref{lem:large-x}, there is an $M\in(1,\infty)$ such that if $y<M^{-1}$ or $y>M$, then for any $\xi\le 1$, we have \begin{equation}\label{eq:whatweneed}\frac12\xi^2 \Delta w(y)+\zeta[F(w(\gamma^{-1} y))-w(y)]+\nu y w'(y)\ge 0.\end{equation} Also, \cref{eq:therealinequality} tells us that, by taking $\xi$ sufficiently close to $0$, we can ensure that \cref{eq:whatweneed} holds for $y\in [M^{-1},M]$ as well. Combined with \cref{eq:timederivbecomesspacederiv}, this means that
    \[\frac12\Delta \underline v(t,x)+\zeta[F(\underline v(\gamma^{-1} x))-\underline v(x)] -\partial_t \underline v(t,x)\ge 0\] at every point $(t,x)$ of differentiability. By the comparison principle \cref{prop:comparisonprinciple}, 
and keeping in mind the initial conditions~\cref{eq:uLic}
and the boundary conditions~\cref{eq:uLleft,eq:uLright} for~$u^{(L)}$,
we therefore have $u^{[L]}(t,x)\ge \underline v(t,x)$ 
    for all $t,x,L\ge 0$, and hence by \cref{eq:vLtov} we have 
    \[
    u(t,x)\ge \underline v(t,x) = w(\e^{-\nu t}\xi x),\hbox{ for all $t,x\ge 0$.}
    \]
    This implies \cref{eq:speed-sub}, and if $\gamma<F'(0)$, then we can take $\nu =0$ and recall \cref{eq:limitexists} to obtain~\cref{eq:Uinftynottozero}.
\end{proof}

\subsection{Analytic constructions}\label{sec:analyt}

In this section, we prove~\cref{lem:HfromF}, building the function $H(u)$. 
We also prove the auxiliary~\cref{lem:build-sub-inductive}
that was used to construct the function $v_\omega(x)$ used throughout the construction of the
sub-solution.

\subsubsection{Construction of \texorpdfstring{$H$}{H}}\label{sec:H-constr}

\begin{lem}\label{lem:buildH}
    Let $b>0$.
    Suppose that $G\in \clC^1([0,b])$ is such that $G(0) = 0$, $G(b) = b$, $G'(0)>1$, 
    and $u< G(u)\le b$ for all $u\in [0,b]$.
Then for any $g\in (1,G'(0))$, there is a function $H\in\clC^2([0,1])$ and $\delta\in(0,b)$
with the following properties:
\begin{enumerate}
    \item $H(0)=0$, $H(b)=b$, and 
        \begin{equation}
            u<H(u)\le G(u)\qquad\text{for all }u\in (0,b).\label{eq:HbelowG}
        \end{equation}
    \item $H'(u)>0$ for all $u\in [0,b]$.
    \item $H(u)=gu$ for all $x\in [0,\delta]$.
    \item There is a $\delta>0$ such that $H$ is convex on $[b-\delta,b]$.
\end{enumerate}
\end{lem}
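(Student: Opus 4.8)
The plan is to build $H$ piecewise — on a neighborhood $[0,\delta]$ of $0$, on a neighborhood $[b-\eta,b]$ of $b$, and on the bulk $[\delta,b-\eta]$ — and then glue the pieces together $\clC^2$-ly. It is convenient to work with $h\coloneqq G-\mathrm{id}$, which lies in $\clC^1([0,b])$ with $h(0)=h(b)=0$, $h>0$ on $(0,b)$, and $h'(0)=G'(0)-1>0$; the requirement $u<H(u)\le G(u)$ on $(0,b)$ then becomes $0<H-\mathrm{id}\le h$. Near $0$ this is easy: since $h(u)/u\to G'(0)-1>g-1$ as $u\searrow 0$, there is $\delta>0$ with $h(u)>(g-1)u$ on $(0,\delta]$, so setting $H(u)=gu$ on $[0,\delta]$ gives $H(0)=0$, $u<H(u)<G(u)$ on $(0,\delta]$, and $H'=g>0$.

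The hard part is the neighborhood of $b$, where we need $H$ convex — which competes directly with $H\le G$, since $G$ need not be convex near $b$ and $h$ may decay arbitrarily fast there, so there is no fixed-rate convex function sitting below $h$. The key object is the lower convex envelope $\check h$ of $h$ restricted to $[b-\eta,b]$ (for an arbitrary $\eta\in(0,b)$). It is convex with $\check h\le h$ and $\check h(b)=0$, and, crucially, $\check h(u)>0$ for every $u\in[b-\eta,b)$: the only point of the graph of $h|_{[b-\eta,b]}$ at height $0$ is $(b,0)$, so $(u,0)$ can lie in the convex hull of that graph only when $u=b$. Being convex with its minimum attained at the right endpoint, $\check h$ is non-increasing, and it is strictly decreasing on some interval $(b-\eta',b)$. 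Next I would smooth $\check h$ from below: let $p$ be its (non-decreasing, non-positive) right derivative, extended by $0$ past $b$, let $\tilde p$ be the convolution of $p$ with a mollifier supported in $[0,\rho]$ for small $\rho$ — so $\tilde p$ is $\clC^\infty$, non-decreasing, and $p\le\tilde p\le 0$ — and set $\psi(u)\coloneqq\int_u^b(-\tilde p(t))\,dt$. Then $\psi$ is $\clC^\infty$, $\psi''=\tilde p'\ge 0$ so $\psi$ is convex, $\psi(b)=0$, $\psi(u)\le\int_u^b(-p)=\check h(u)\le h(u)$, and $\psi>0$ on $[b-\eta,b)$ because $\tilde p<0$ near $b$. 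Finally, put $H=\mathrm{id}+\lambda\psi$ on $[b-\eta,b]$ with $\lambda\in(0,1)$ small enough that $H'=1+\lambda\psi'>0$ (possible since $\psi'=\tilde p$ is bounded); this $H$ is convex on $[b-\eta,b]$, has $H(b)=b$, and satisfies $u<H(u)=u+\lambda\psi(u)<u+h(u)=G(u)$ on $[b-\eta,b)$.

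It remains to interpolate on the bulk $[\delta,b-\eta]$, where $h$ is continuous and positive, hence $\ge\eps_*>0$, so the region $\{(u,y):0<y<h(u)\}$ is comfortably thick and both endpoint values already fixed — $(g-1)\delta$ at $u=\delta$ and $\lambda\psi(b-\eta)$ at $u=b-\eta$ — lie strictly inside it. I would take any $\clC^2$ function $\theta$ valued in $(0,h)$ that matches, to second order, the two pieces already built at $u=\delta$ and $u=b-\eta$, interpolated slowly enough to keep $\theta'>-1$ (a routine bump-function construction, using that the prescribed first derivatives there, $g-1$ and $\lambda\tilde p$, exceed $-1$), and set $H=\mathrm{id}+\theta$. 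Gluing the three pieces $\clC^2$-ly costs nothing since we match values and first two derivatives at each junction, so $H\in\clC^2([0,b])$; by construction $H$ is increasing with $H(0)=0$ and $H(b)=b$ (hence maps $[0,b]$ into $[0,b]$), $H'>0$ throughout, $u<H(u)\le G(u)$ on $(0,b)$, $H=g\,\mathrm{id}$ on $[0,\delta]$, and $H$ is convex on $[b-\eta,b]$. The single delicate point is the construction near $b$, and it rests entirely on the fact that $h=G-\mathrm{id}$ vanishes only at $b$: that is exactly what makes the lower convex envelope strictly positive away from $b$ and hence usable as a ceiling underneath which to fit a smooth convex minorant.
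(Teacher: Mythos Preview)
Your argument is correct and complete in its essentials; the one point you assert without justification is that $\tilde p$ is bounded, which you need in order to pick $\lambda$ so that $H'=1+\lambda\tilde p>0$. This follows because $h=G-\mathrm{id}\in\clC^1$ is Lipschitz on $[b-\eta,b]$, and the lower convex envelope of a Lipschitz function inherits the same Lipschitz constant (for instance, the affine function $v\mapsto h(b-\eta)-L(v-(b-\eta))$ lies below $h$, hence below $\check h$, which together with $\check h(b-\eta)=h(b-\eta)$ bounds the right derivative at the left endpoint by $-L$); so $p$, and therefore $\tilde p$, is bounded. With that filled in, everything goes through, and your handwave on the bulk interpolation is no worse than the paper's.

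Your route near $b$ is genuinely different from the paper's. The paper isolates a sublemma (\cref{lem:cvxbelow}) that produces a $\clC^2$ convex minorant of any continuous nonnegative function vanishing only at one endpoint, by the very elementary device of double-integrating the running minimum $k(x)=\min_{y\in[x,1/2]}G(y)$; it then applies this to $u\mapsto G(b-u)-(b-u)$ and reflects back. You instead take the lower convex envelope of $h|_{[b-\eta,b]}$ directly---the canonical convex minorant---and smooth it by mollifying its (monotone) derivative from the right. Your approach is conceptually cleaner (the convex envelope is the ``right'' object), while the paper's is more elementary and self-contained (no mollifiers, no facts about convex envelopes). Both leave the middle interpolation as a routine exercise and both set $H=gu$ near $0$ in the same way.
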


To prove this, we first need the following lemma:
\begin{lem}\label{lem:cvxbelow}
    Suppose that $G\colon[0,1/2]\to [0,\infty)$ is continuous and such that $G(0)=0$ 
    and $G(u)>0$ for all $u\in(0,1/2]$. Then there is a $\clC^2$ convex function $h\colon[0,1/2]\to [0,\infty)$ such that $h(0)=0$ and $0<h(u)< G(u)$ for all $u\in (0,1/2]$.
\end{lem}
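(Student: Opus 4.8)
The plan is to construct $h$ through its derivative: I will write $h(u)=\int_0^u\phi(s)\,\dif s$ for a suitable non-negative, non-decreasing, $\clC^\infty$ function $\phi\colon[0,1/2]\to[0,\infty)$ that is strictly positive on $(0,1/2]$. Then $h\in\clC^2$ automatically, $h$ is convex (since $h'=\phi$ is non-decreasing), $h(0)=0$, and $h>0$ on $(0,1/2]$; moreover monotonicity of $\phi$ gives the crude bound $h(u)\le u\,\phi(u)$. So everything reduces to choosing $\phi$ with $u\,\phi(u)\le\tfrac12 G(u)$ for $u\in(0,1/2]$, which forces $h(u)\le\tfrac12G(u)<G(u)$.

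First I would reduce to the case of a \emph{non-decreasing} $G$. Replacing $G$ by $g(u)\coloneqq\min_{t\in[u,1/2]}G(t)$, with $g(0)\coloneqq 0$, yields a continuous non-decreasing function with $0<g(u)\le G(u)$ on $(0,1/2]$; and since $G$ is continuous with $G(0)=0$ we have $\inf_{(0,1/2]}G=0$, hence $g(0^+)=0$, so $g$ is continuous on $[0,1/2]$. It suffices to put $h$ below $g$. Next set $\psi(u)\coloneqq g(u)/(2u)$ and pass to its largest non-decreasing minorant $\psi_*(u)\coloneqq\inf_{t\in[u,1/2]}g(t)/(2t)$. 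Then $\psi_*$ is continuous and non-decreasing on $(0,1/2]$, satisfies $\psi_*\le\psi$ (take $t=u$), and — the key point — satisfies $\psi_*(u)\ge g(u)>0$ on $(0,1/2]$, because for $t\in[u,1/2]$ one has $2t\le 1$ and $g(t)\ge g(u)$, so $g(t)/(2t)\ge g(u)$. Extending by $\psi_*(u)\coloneqq\psi_*(0^+)$ for $u\le 0$ makes $\psi_*$ continuous and non-decreasing on $(-\infty,1/2]$; the limit $\psi_*(0^+)\in[0,\infty)$ exists by monotonicity (and may well be $0$).

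Finally I would obtain $\phi$ by mollifying $\psi_*$ \emph{from the left}. Fix a $\clC^\infty$ kernel $\rho\colon[0,1]\to[0,\infty)$ with $\rho>0$ on $(0,1)$, vanishing to infinite order at the endpoints, and $\int_0^1\rho=1$, and set $\phi(u)\coloneqq\int_0^1\psi_*(u-\delta s)\rho(s)\,\dif s$ for a small fixed $\delta>0$. A routine change of variables shows $\phi\in\clC^\infty((-\infty,1/2])$; it is non-decreasing because $\psi_*$ is and $\rho\ge 0$; it satisfies $\phi(u)\le\psi_*(u)$ on $(0,1/2]$ since we average $\psi_*$ only over $[u-\delta,u]$ and $\psi_*$ is non-decreasing; and $\phi(u)>0$ on $(0,1/2]$ because for $u>0$ the set $\{s\in(0,1):u-\delta s>0\}$ has positive measure and there $\psi_*(u-\delta s)\rho(s)>0$. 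Since $\psi_*\ge 0$ we also have $\phi\ge\phi(0)=\psi_*(0^+)\ge 0$. Setting $h(u)=\int_0^u\phi$ we then get $0<h(u)\le u\,\phi(u)\le u\,\psi_*(u)\le u\cdot\frac{g(u)}{2u}=\frac{g(u)}2\le\frac{G(u)}2<G(u)$ for $u\in(0,1/2]$, and all the structural properties listed above, completing the proof.

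The only genuinely delicate issue is that $G$ may vanish at the origin faster than every power of $u$ (e.g.\ like $\e^{-1/u}$), so $h$ cannot be a fixed monomial — its decay rate must be slaved to that of $G$ while $h$ remains convex and $\clC^2$ with $h(0)=0$. This is exactly what the one-sided mollification achieves: it preserves both the monotonicity of $\phi$ (hence convexity of $h$) and the strict positivity of $\phi$ on $(0,1/2]$, even in the case $\psi_*(0^+)=0$.
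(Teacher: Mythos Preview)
Your proof is correct, but the paper's argument is considerably shorter. After forming the same non-decreasing minorant $k(x)=\min_{y\in[x,1/2]}G(y)$ (your $g$), the paper simply sets
\[
h(x)=\int_0^x\!\int_0^y k(z)\,\dif z\,\dif y,
\]
so that $h''=k$ directly: this gives $h\in\clC^2$, convexity, $h(0)=0$, and $h>0$ on $(0,1/2]$ in one stroke. The upper bound is then the crude estimate $h(x)\le\tfrac12 x^2\max_{[0,x]}k=\tfrac12x^2k(x)\le\tfrac18G(x)$, using $x\le 1/2$. No mollification is needed.

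Your route instead builds $h'$ rather than $h''$: you pass from $g$ to a non-decreasing minorant $\psi_*$ of $g(u)/(2u)$, then mollify from the left to get a smooth non-decreasing $\phi\le\psi_*$, and set $h=\int_0^{\cdot}\phi$. This is sound (and yields $h\in\clC^\infty$, which is more than required), but the extra step of producing $\psi_*$ and smoothing it is avoided entirely by the paper's double integration, which lets the continuous $k$ serve as $h''$ without modification.
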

\begin{proof}
Define 
\[
    k(x) = \min_{y\in [x,1/2]}G(y).
\]
Then it is clear that $k$ is increasing and continuous since $G$ is continuous, and moreover that for all $x\in (0,1/2]$ we have $k(x)>0$ since $G(x)>0$ for all $x\in (0,1/2]$. It is also clear from the definition that $k(x)\le G(x)$ for all $x\in [0,1/2]$. We now define
\[h(x) = \int_0^x \int_0^y k(z)\,\dif z\,\dif y.\]
By the fundamental theorem of calculus, since $k$ is nonnegative and continuous, we see that $h''(x)=k(x)$ and so $h$ is convex and $\clC^2$. If $x\in (0,1/2]$, then it is also clear that $h(x)>0$ since this property is true of $k$. Finally, we see that $h(0)=0$, and also that, for all $x\in (0,1/2)$,
\[h(x) \le \frac12x^2 \max_{y\in [0,x]}k(y)= \frac12 x^2 k(x)\le\frac18 G(x)<G(x).\]
Thus $h$ has all of the desired properties.
\end{proof}

Now we can prove \cref{lem:buildH}.

\begin{proof}[Proof of \cref{lem:buildH}]
Assume without loss of generality that $b=1$.
As $G(1)=1$, we may apply \cref{lem:cvxbelow} to the function
\[
\tilde G(u)=G(1-u)-(1-u).
\]
This gives a $\clC^2$ convex function $\tilde H_1(u)$ such that 
\[
\hbox{$\tilde H_1(0)=0$ 
and $0<\tilde H_1(u)<\tilde G(u)$ for all $0<u\le 1/2$.}
\]
Then, the function 
\[
H_1(u)=\tilde H_1(1-u)+u
\]
is also $\clC^2$ and convex and satisfies
\[
\hbox{$u<H_1(u)< G(u)$ for all $u\in [1/2,1)$,}
\]
and $H_1(1)=1$. Since $G(u)\le 1$ for all $u<1$, these properties imply that $H_1'(1)>0$, 
so there is a $\delta_1\in(0,1/2)$ such that 
\[
\hbox{$H_1'(u)>0$ for all $u\in[1-\delta_1,1]$.}
\]
Since $1<g<G'(0)$, we can also find a $\delta_2\in(0,1/2)$ such that 
\[
gu< G(u)\text{ for 
all $u\in (0,\delta_2]$.}
\]
Now we define $\delta_3 = \delta_2/g<\delta_2$. 
Then, we have 
\[
G(u)>gu\ge g\delta_3~~\hbox{for all $u\in [\delta_3,\delta_2]$,}
\]
as well as
\[
G(u)>u\ge \delta_2=g\delta_3~~\hbox{for all $u\in [\delta_2,1]$,}
\]
We see that
\[
\hbox{$G(u)>g\delta_3$ for all $u\in[\delta_3,1]$.}
\]
We also pick $\delta_4\in (0,\delta_1)$ small enough that $H_1(1-\delta_4)>g\delta_3$.

Now we let 
\[
\hbox{$H(u) = gu$ for $u\in [0,\delta_3]$,}
\]
and
\[
\hbox{ $H(u) = H_1(u)$ for $u\in [1-\delta_1,1]$.}
\]
It is then straightforward to extend $H$ on the interval
$(\delta_3,1-\delta_1)$ such that $H$ is $\clC^2$, $H'>0$, and $u<H(u)<G(u)$ for all $u\in (1-\delta_3,\delta_1)$. (See \cref{fig:HfromG}.)
	Then the proof is complete with $\delta=\delta_1\wedge \delta_3$.
        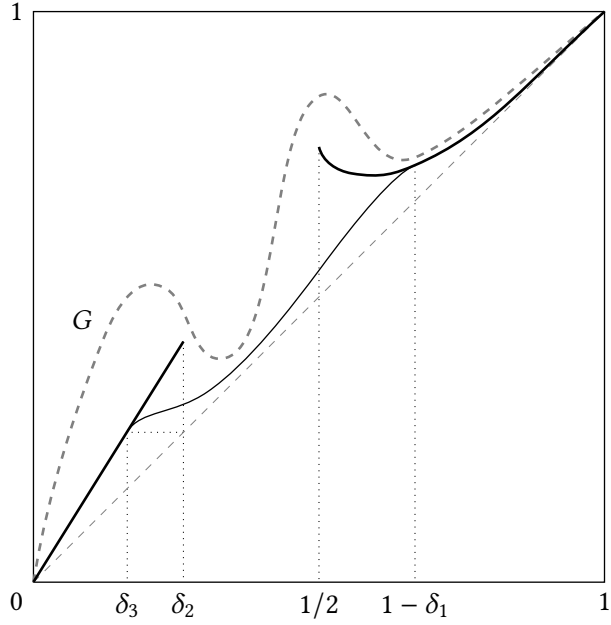
\begin{figure}
            \centering
            \def \globalscale {1.500000}
\begin{tikzpicture}[y=1pt, x=1pt, yscale=\globalscale,xscale=\globalscale]
  \draw[dotted] (23.65,38.02) -- (23.65,0.0) node [below] {$\delta_3$};
  \draw[dotted] (72, 109.845) -- (72,0.0) node [below] {$1/2$};
  \draw[dotted] (96.213,105.243) -- (96.213,0) node [below] {$1-\delta_1$};
  \draw[dotted] (37.817,60.723) -- (37.817,0) node [below] {$\delta_2$};
  \draw[dotted] (37.817,37.817) -- (23.65,37.817);
  \node [below left] at (0,0) {$0$};
  \node [below] at (144,0) {$1$};
  \node [left] at (0,144) {$1$};

  \path[draw=black,line width=0.5pt] (-0.0, 144.0) rectangle (144.0, 0.0);

  \path[draw=gray,dashed,line width=0.3pt] (0.0, 0.0) -- (144.0, 144.0);

  \path[draw=gray,dashed,line width=1.0pt] (0.0, 0.0).. controls (0.444, 1.767) and
  (3.193, 25.33) .. (17.621, 61.331).. controls (21.355, 70.646) and (26.57,
  78.124) .. (33.555, 73.999).. controls (39.541, 70.463) and (39.928, 54.073)
  .. (48.677, 56.651).. controls (60.801, 60.223) and (60.344, 109.527) ..
  (69.644, 120.572).. controls (79.219, 131.942) and (83.064, 102.081) ..
  (96.007, 107.109).. controls (108.905, 112.12) and (128.07, 131.051) ..
  (144.0, 144.0);
\node [above left] at (17.621,61.331) {$G$};

  \path[draw=black,line width=1pt] (0.0, 0.057) --
  (23.665, 38.02) -- (37.817, 60.723);

  \path[draw=black,line width=0.3pt,line cap=round,line join=round,line width=0.5pt] (23.665, 38.02).. controls (27.042, 43.437) and
  (35.549, 42.328) .. (43.804, 47.988).. controls (54.24, 55.143) and (64.962,
  69.297) .. (72.044, 78.747).. controls (82.0, 92.336) and (91.047, 103.028) ..
  (96.21, 105.242);

  \path[draw=black,line width=1pt] (144.0, 144.0)..
  controls (125.321, 127.061) and (115.063, 113.325) .. (96.213, 105.243)..
  controls (91.266, 103.122) and (87.813, 101.877) .. (80.198, 103.171)..
  controls (76.452, 103.802) and (72.876, 106.162) .. (72, 109.845);
\end{tikzpicture}
            \caption{Construction of $H$. The functions $u\mapsto gu$ and $H_1$ are plotted as thick black lines, and $H$ between them is plotted as a thin black line.\label{fig:HfromG}}
        \end{figure}
\end{proof}

\subsubsection{Construction of \texorpdfstring{$v_\omega$}{v\_omega}}

\begin{lem}
For any $x>0$, the function $\omega\mapsto\frac{x^{\omega}-1}{\omega}$
is increasing on $[0,\infty)$.\label{lem:omegathingincreasing}
\end{lem}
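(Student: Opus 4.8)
The plan is to show that the function $\phi(\omega)=\frac{x^\omega-1}{\omega}$ is nondecreasing on $(0,\infty)$ by a direct derivative computation, treating the cases $x=1$, $x>1$, and $x<1$ separately, and handling the value at $\omega=0$ by continuity. When $x=1$ the function is identically $0$, so there is nothing to prove. For $x\neq 1$, write $x^\omega=\e^{\omega\log x}$ and set $a=\log x$, so that $\phi(\omega)=\frac{\e^{a\omega}-1}{\omega}$.

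For the monotonicity on $(0,\infty)$, I would compute
\[
\phi'(\omega)=\frac{a\omega\e^{a\omega}-(\e^{a\omega}-1)}{\omega^2}=\frac{\e^{a\omega}(a\omega-1)+1}{\omega^2}.
\]
So it suffices to show that $g(s)\coloneqq \e^{s}(s-1)+1\ge 0$ for all $s\in\RR$, where $s=a\omega$ ranges over all of $\RR\setminus\{0\}$ as $\omega$ ranges over $(0,\infty)$ (since $a\neq 0$). This is elementary: $g(0)=0$, and $g'(s)=\e^s(s-1)+\e^s=s\e^s$, which is negative for $s<0$ and positive for $s>0$, so $g$ has a strict global minimum at $s=0$ with value $0$; hence $g(s)\ge 0$ everywhere, with equality only at $s=0$. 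Therefore $\phi'(\omega)\ge 0$ on $(0,\infty)$, with strict inequality whenever $a\neq 0$, i.e.\ whenever $x\neq 1$. This gives that $\phi$ is (strictly, when $x\neq1$) increasing on $(0,\infty)$.

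Finally, to extend the statement to the closed interval $[0,\infty)$, I would note that $\lim_{\omega\searrow 0}\phi(\omega)=\log x$ by L'H\^opital's rule (or the definition of the derivative of $\omega\mapsto x^\omega$ at $0$), so that defining $\phi(0)=\log x$ makes $\phi$ continuous on $[0,\infty)$; a continuous function that is nondecreasing on $(0,\infty)$ is nondecreasing on $[0,\infty)$, and one checks directly that $\phi(0)=\log x\le \phi(\omega)$ for $\omega>0$ since $g(a\omega)\ge 0$ rearranges (after dividing by $\omega$, for $x>1$) to $\phi(\omega)\ge \frac{a(\e^{a\omega})-\text{(lower order)}}{\cdots}$; more cleanly, $\phi(\omega)-\log x = \frac{\e^{a\omega}-1-a\omega}{\omega}\ge 0$ for $\omega>0$ because $\e^t\ge 1+t$ for all $t\in\RR$. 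There is no real obstacle here; the only mild subtlety is being careful about the sign of $a=\log x$ when $x<1$, which is why I prefer to phrase the argument through $g(s)\ge 0$ on all of $\RR$, a form that is manifestly sign-agnostic.
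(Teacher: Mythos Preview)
Your proof is correct, but it takes a different route from the paper. The paper argues without differentiation: for $\omega_2>\omega_1>0$ it writes
\[
\frac{x^{\omega_2}-1}{\omega_2}=\frac{\omega_2-\omega_1}{\omega_2}\cdot\frac{x^{\omega_2}-x^{\omega_1}}{\omega_2-\omega_1}+\frac{\omega_1}{\omega_2}\cdot\frac{x^{\omega_1}-1}{\omega_1},
\]
and then uses convexity of $\omega\mapsto x^\omega$ (so that the secant slope over $[\omega_1,\omega_2]$ exceeds the one over $[0,\omega_1]$) to conclude. Your approach instead computes $\phi'(\omega)$ and reduces the sign question to the elementary inequality $g(s)=\e^s(s-1)+1\ge 0$, which you verify via $g'(s)=s\e^s$. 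Both arguments are short; the paper's is a one-line application of ``secant slopes of a convex function are increasing,'' while yours is a direct calculus computation that has the mild advantage of handling the endpoint $\omega=0$ explicitly. One presentational remark: your final paragraph briefly wanders (``rearranges \ldots to $\phi(\omega)\ge\cdots$'') before you recover with the clean identity $\phi(\omega)-\log x=\frac{\e^{a\omega}-1-a\omega}{\omega}\ge 0$ via $\e^t\ge 1+t$; you could simply state that last identity and drop the preceding half-sentence.
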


\begin{proof}
For $\omega_{2}>\omega_{1}>0$, we have
\begin{equation}\label{dec1904}
\frac{x^{\omega_{2}}-1}{\omega_{2}}=\frac{\omega_{2}-\omega_{1}}{\omega_{2}}\left(\frac{x^{\omega_{2}}-x^{\omega_{1}}}{\omega_{2}-\omega_{1}}\right)+\frac{\omega_{1}}{\omega_{2}}\left(\frac{x^{\omega_{1}}-1}{\omega_{1}}\right).
\end{equation}
By the convexity of $\omega\mapsto x^{\omega}$ and the mean value
theorem, we see that 
\[
\frac{x^{\omega_{2}}-x^{\omega_{1}}}{\omega_{2}-\omega_{1}}>\frac{x^{\omega_{1}}-1}{\omega_{1}},
\]
which, together with \cref{dec1904}, implies that 
\[
\frac{x^{\omega_{2}}-1}{\omega_{2}}>\frac{x^{\omega_{1}}-1}{\omega_{1}}.\qedhere
\]
\end{proof}
We now come to the crucial lemma that we used the construction of the subsolution.
\begin{lem}\label{lem:build-sub-inductive}
Suppose that $0<B<1<f$ and $\nu\ge0$ are such that
\begin{equation}
\kappa\coloneqq\inf_{\omega\in(0,1]}\left(fB^{\omega}-1+\nu\omega\right)>0.\label{eq:kappagt0}
\end{equation}
Fix also an $\alpha\in (0,1)$.
Then for each $\omega\in(0,1]$, there is a function
$v_{\omega}\colon[0,1]\to[0,1]$ with the %
following properties:
\begin{enumerate}
    \item \label{enu:firstprop}$v_\omega$ is increasing, continuous, and piecewise $\clC^2$. At any point $x$ at which $v_\omega$ is not $\clC^2$, we have \begin{equation}v_\omega'(x-)<v_\omega'(x+).\label{eq:deriv-ordering}\end{equation}
\item There is a $\delta=\delta_\omega>0$ such that the restriction $v_{\omega}|_{[0,\delta_{\omega}]}$ is linear.
\item We have an 
\begin{equation}
m_{\omega}\in[\omega,\alpha^{-1}\omega)\label{eq:momegacond}
\end{equation}
 such that
\begin{equation}
v_{\omega}(x)=\frac{m_{\omega}}{\omega}(x^{\omega}-1)+1\qquad\text{for all }x\in[B,1].\label{eq:vomegalargeval}
\end{equation}
\item \label{enu:lastprop} We have
\begin{equation}
0\le fv_{\omega}(Bx)-v_{\omega}(x)+\nu xv_{\omega}'(x)\qquad\text{for all }x\in[0,1].\label{eq:fvBomega}
\end{equation}
\end{enumerate}
\end{lem}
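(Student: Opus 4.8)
The plan is to assemble $v_\omega$ out of a short list of elementary subsolutions of the one–dimensional inequality in \cref{eq:fvBomega}, exploiting two facts: each elementary piece can be checked by a one-line computation using $\kappa>0$, and a pointwise maximum of subsolutions is again a subsolution which automatically has the required upward jumps of the derivative at its crossing points.

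For the building blocks, first note that for any exponent $\beta\in(0,1]$ and constant $a\ge0$ the monomial $x\mapsto ax^{\beta}$ has ``defect'' $f\,a(Bx)^{\beta}-ax^{\beta}+\nu x\,a\beta x^{\beta-1}=ax^{\beta}(fB^{\beta}-1+\nu\beta)\ge ax^{\beta}\kappa\ge0$; in particular the prescribed function $\frac{m_\omega}{\omega}(x^{\omega}-1)+1$ is a monomial plus a constant, and for $m_\omega=\omega$ it is simply $x^\omega$, whose defect $x^\omega(fB^\omega-1+\nu\omega)$ is nonnegative on all of $(0,1]$. Second, for $s\ge0$ and $c\in\RR$ the affine function $x\mapsto sx+c$ has defect $sx(fB-1+\nu)+c(f-1)$; since the $\beta=1$ endpoint of the infimum defining $\kappa$ gives $fB-1+\nu\ge\kappa>0$, this is nonnegative for all $x\ge0$ when $c\ge0$ and for $x$ beyond an explicit threshold when $c<0$. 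The structural observation is that the defect $fv(Bx)-v(x)+\nu xv'(x)$ is increasing in the value $v(Bx)$ (that term carries the positive coefficient $f$); hence if $v=\max(g_1,\dots,g_k)$ with each $g_i$ a subsolution, then at any differentiability point where $v=g_j$ locally we get $fv(Bx)-v(x)+\nu xv'(x)\ge fg_j(Bx)-g_j(x)+\nu xg_j'(x)\ge0$, while at a transversal crossing of two of the $g_i$ the maximum has a strictly upward jump of the derivative, exactly the inequality demanded in \cref{eq:deriv-ordering}.

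The construction then runs as follows. Fix $m_\omega\in[\omega,\alpha^{-1}\omega)$; when $\omega<1$ this piece is strictly concave with the comparatively small slope $m_\omega B^{\omega-1}$ at the left endpoint $x=B$ of its prescribed domain, and (depending on $f,B,\nu$) one takes $m_\omega$ slightly above $\omega$ so that, continued to the left, it dips below the horizontal axis near $0$. Set $v_\omega$ equal to $\frac{m_\omega}{\omega}(x^{\omega}-1)+1$ on $[B,1]$ — increasing, bounded by $1$, and a subsolution — and extend it leftwards as the maximum of this continued power with one or more affine pieces, ending with a line through the origin valid on a neighborhood $[0,\delta_\omega]$ of $0$. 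One chooses the crossing abscissae so that (i) they all lie to the left of $B$, so the prescribed formula survives on $[B,1]$ and \cref{eq:vomegalargeval} holds with $m_\omega=v_\omega'(1)$, giving \cref{eq:momegacond}; (ii) at each crossing the piece on the right is the steeper one, so the crossing is transversal with an upward jump; and (iii) the affine pieces used below $B$ are evaluated only where their defect is nonnegative, and the continued power is used only where $x^\omega\,\tfrac{m_\omega}{\omega}(fB^\omega-1+\nu\omega)+(f-1)(1-m_\omega/\omega)\ge0$. Then \cref{eq:fvBomega} holds everywhere by the maximum principle of the previous paragraph (trivially on the linear germ at $0$, via the $\beta=1$ case of $\kappa$), and membership of $v_\omega$ in $[0,1]$, strict monotonicity, and piecewise $\clC^2$ regularity follow by inspection of the pieces.

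I expect the genuine obstacle to be requirement (ii) uniformly over $\omega\in(0,1]$, and especially as $\omega\downarrow0$: because $\frac{m_\omega}{\omega}(x^\omega-1)+1$ is concave, whatever is glued to it on its left must, at the contact point, be \emph{flatter} than its small slope $m_\omega x^{\omega-1}$, whereas any piece that detaches cleanly from the origin must there be at least as steep as a line; reconciling these demands across a chain of corners is what forces the staged construction and consumes the flexibility in $m_\omega$ together with (in the calling lemma) the freedom to take $\alpha$ close to $1$. Everything else — the two one-line defect identities, the monotonicity principle, and the bookkeeping of signs and thresholds — is routine.
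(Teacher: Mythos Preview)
Your proposal has the right ingredients but stops exactly where the real work begins. You correctly identify that the hard case is small $\omega$, and you correctly pinpoint the tension: the prescribed concave tail $\frac{m_\omega}{\omega}(x^\omega-1)+1$ has slope of order $\omega$ near $x=B$, while any piece that starts linearly from the origin has a fixed positive slope, and you need the derivative to jump \emph{up} at every junction. What you do not do is show how the ``staged construction'' with affine pieces actually bridges this gap. A short computation shows why this is not routine: the shifted power is only a subsolution of \cref{eq:fvBomega} where $x^\omega\ge\frac{(f-1)(m_\omega-\omega)}{m_\omega\kappa}$, while the upward-jump condition at the crossing with a line forces $x_0^\omega<\frac{m_\omega-\omega}{m_\omega(1-\omega)}$. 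These are compatible only when $\omega>1-\kappa/(f-1)$; for smaller $\omega$ no single line-plus-power gluing works, and adding more affine pieces does not obviously relax this constraint, since the binding inequality lives at the rightmost junction with the power. Calling the remaining step ``bookkeeping'' is not justified.

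The paper resolves this with an idea you do not mention: an inductive descent in $\omega$. One takes $v_1(x)=x$, and given $v_{\omega_1}$, builds $v_{\omega_2}$ for any $\omega_2\in(\alpha^2\omega_1,\omega_1)$ by gluing $v_{\omega_1}$ on $[0,1]$ to a power $\frac{m}{\omega_2}(x^{\omega_2}-1)+1$ on $[1,\infty)$ and then rescaling $x\mapsto Mx$ with $M$ large. The rescaling is what makes the construction close: it converts the power segment on $[1,M]$ into the required form on $[B,1]$, while the already-built $v_{\omega_1}$ supplies the linear germ at $0$ and the subsolution property on the inner part. The constraint analogous to yours becomes $m/\omega_2<\alpha^{-3}$, which is exactly why the paper restricts to $\alpha>(1-\kappa/(f-1))_+^{1/3}$ and why each inductive step can only reduce $\omega$ by a bounded factor. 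Your maximum-of-subsolutions framework is compatible with this, but the essential mechanism---reusing a previously constructed $v_{\omega_1}$ rather than elementary pieces---is missing from your proposal.
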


\begin{proof}
As decreasing $\alpha$ makes \cref{eq:momegacond} easier to satisfy,
we may assume without loss of generality that   
    \begin{equation}\label{dec1802}
\alpha>\left(1-(f-1)^{-1}\kappa\right)_{+}^{1/3}.
\end{equation}
The parameter $\alpha$ will remain fixed throughout the remainder of the proof.

Let $\mathcal{K}$ be the set of $\omega\in (0,1]$ such that there exists a function $v_{\omega}\colon [0,1]\to [0,1]$ satisfying properties~\ref{enu:firstprop}--\ref{enu:lastprop}.  We will show that $\mathcal{K}=(0,1]$ by an inductive argument.
First we show that $1\in\mathcal{K}$. Indeed, this is clear since we
can take $v_{1}(x)=x$ for all $x\in[0,1]$, and it is straightforward
to check that the required properties are all satisfied (with $\delta_{1}=m_{1}=1$).
In particular,~\cref{eq:fvBomega} holds in that case because of \cref{eq:kappagt0}.

Now suppose for the sake of contradiction that $(0,1]\setminus\mathcal{K}$
is nonempty, let
\[
\omega_{0}=\sup[(0,1]\setminus\mathcal{K}],
\]
and take 
\[
\omega_{1}\in\mathcal{K}\cap[\omega_{0},\alpha^{-1}\omega_{0})
\]
and   
\begin{equation}\label{eq:omega2def}
\omega_2\in [\alpha\omega_0,\omega_0]\setminus\mathcal{K}.
\end{equation}
Then 
\begin{equation}\label{eq:omega12ordering}
    \alpha^2\omega_1<\omega_2<\omega_1,
\end{equation}
and so \cref{eq:momegacond} implies that
\begin{equation}
    \label{eq:slopecond}
\frac{m_{\omega_{1}}}{\omega_{2}}<\alpha^{-1}\frac{\omega_1}{\omega_2}\le\alpha^{-3}.
\end{equation}
Therefore, we can fix
\begin{equation}\label{eq:mchoice}m\in (m_{\omega_1},\alpha^{-3}\omega_2).\end{equation} %

We now 
define
\[
\tilde{v}(x)=\begin{cases}
v_{\omega_{1}}(x), & x\in[0,1];\\
\dfrac{m}{\omega_{2}}(x^{\omega_{2}}-1)+1, & x\ge1.
\end{cases}
\]
	(See \cref{fig:vomegaplot}.)
    \begin{figure}
        \centering
            \begin{tikzpicture}
        \begin{axis}[xlabel = \(x\),ylabel={\(v_{\omega_2}(x)\)}, width=2in]
    \addplot[domain=0:1,samples=2]{x};
    \addplot[domain=1:3,samples=30]{2/0.2*(x^0.2-1)+1};
\end{axis}
\end{tikzpicture}
\caption{Possible plot of $\tilde v$, with the choices $\omega_2=0.2$, $m=2$, and $\omega_1=m_1=1$.\label{fig:vomegaplot}}
\end{figure}
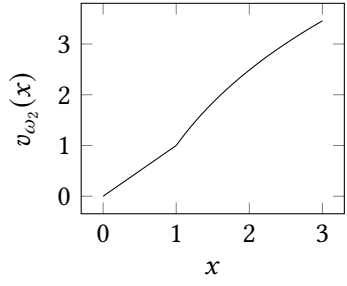
Since $v_{\omega_{1}}(1)=1$, we see that $\tilde v$ is continuous. Moreover, since 
\[
v_{\omega_{1}}'(1)=m_{\omega_{1}}<m,
\]
we see (using \cref{eq:deriv-ordering} since $\omega_1\in\clK$) that $\tilde v'(x-)<\tilde v '(x+)$ at every $x$ at which $\tilde v$ is not $\clC^2$. %
It is also clear that $\tilde v$ is increasing.

We now observe that
\begin{equation}
    \tilde{v}(x)\ge\frac{m}{\omega_{2}}(x^{\omega_{2}}-1)+1\qquad\text{for all }x\ge B.\label{eq:tildevgreater}
\end{equation}
Indeed, equality holds for $x\ge 1$ by definition. On the other hand, for $x\in [B,1]$
we have $x^{\omega_2}-1\le 0$, while  
$m>m_{\omega_1}$. 
It follows from \cref{eq:vomegalargeval,lem:omegathingincreasing} that 
\[
\frac{m}{\omega_{2}}(x^{\omega_{2}}-1)+1\le
\frac{m_{\omega_1}}{\omega_2}(x^{\omega_2}-1)+1 \le \frac{m_{\omega_1}}{\omega_1}(x^{\omega_1}-1)+1 = \tilde v(x),~~\hbox{ for $x\in[B,1]$.} 
\]
We can then conclude, for all $x\ge1$, that
\begin{align}
f & \tilde{v}(Bx)-\tilde{v}(x)+\nu x\tilde{v}'(x)\overset{\cref{eq:tildevgreater}}\ge f\left(\frac{m}{\omega_{2}}(B^{\omega_{2}}x^{\omega_{2}}-1)+1\right)-\left(\frac{m}{\omega_{2}}(x^{\omega_{2}}-1)+1\right)+\nu mx^{\omega_{2}}\nonumber \\
 & =\frac{m}{\omega_{2}}\left(fB^{\omega_{2}}-1+\nu \omega_{2}\right)x^{\omega_{2}}-(f-1)\left(\frac{m}{\omega_{2}}-1\right)\overset{\cref{eq:kappagt0}}{\ge}\frac{m}{\omega_{2}}\kappa-(f-1)\left(\frac{m}{\omega_{2}}-1\right)\nonumber \\
 & =\frac{m}{\omega_{2}}\left(\kappa-(f-1)\right)+f-1.\label{eq:fBxk}
\end{align}
Now if $\kappa\ge f-1$, then the last term is positive since $\omega_{2},m,f-1>0$.
On the other hand, if $\kappa<f-1$, then, recalling  \cref{dec1802,eq:mchoice},
we see that
\[
m<\alpha^{-3}\omega_2\le (1-(f-1)^{-1}\kappa)^{-1}\omega_2=\frac{f-1}{(f-1)-\kappa}\omega_2
\]
Thus, in that case we have  %
\[
\frac{m}{\omega_{2}}\left(\kappa-(f-1)\right)>-(f-1).
\]
Hence, the right side of \cref{eq:fBxk} is positive in this
case as well. 

Combining \cref{eq:fBxk} with the hypothesis \cref{eq:fvBomega}
for $\omega=\omega_{1}$, we see that 
\begin{equation}
0\le f\tilde{v}(Bx)-\tilde{v}(x)+\nu x\tilde{v}'(x)\qquad\text{for all }x\ge0.\label{eq:eqholdsfortildev}
\end{equation}
Now we define
\[
v_{\omega_{2}}(x)=\frac{\tilde{v}(Mx)}{\tilde{v}(M)}
\]
for some $M\ge1/B$ to be chosen shortly. Then we have, for $x\ge1/M$
(hence in particular for $x\ge B$) that
\begin{equation}\label{dec1918}
\begin{aligned}
v_{\omega_{2}}(x) & =
\frac{\frac{m}{\omega_{2}}(M^{\omega_{2}}x^{\omega_{2}}-1)+1}
{\frac{m}{\omega_{2}}(M^{\omega_{2}}-1)+1}=
\frac{\frac{m}{\omega_{2}}M^{\omega_{2}}(x^{\omega_{2}}-1)+1-\frac{m}{\omega_2}
+M^{\omega_2}\frac{m}{\omega_2}}
{\frac{m}{\omega_{2}}(M^{\omega_{2}}-1)+1}
\\
&=\frac{M^{\omega_{2}}}{M^{\omega_{2}}-1+\frac{\omega_{2}}{m}}(x^{\omega_{2}}-1)+1.
\end{aligned}
\end{equation}
We note that
\begin{equation}
\omega_2\overset{\cref{eq:omega12ordering}}<\omega_1\overset{\cref{eq:momegacond}}\le m_{\omega_1}\overset{\cref{eq:mchoice}}<m.
\end{equation}
Therefore, defining 
\[
m_{\omega_{2}}\coloneqq\frac{\omega_{2}M^{\omega_{2}}}{M^{\omega_{2}}-1+\frac{\omega_{2}}{m}}
\]
and taking $M$ sufficiently large, we have
    \begin{equation}
\omega_2< m_{\omega_2}<\alpha^{-1}\omega_2.
\end{equation}
Therefore, \cref{eq:momegacond} is satisfied for $\omega=\omega_2$, while~\cref{eq:vomegalargeval}  
holds for $\omega=\omega_{2}$ because of~\cref{dec1918}. The inequality
\cref{eq:eqholdsfortildev} implies that \cref{eq:fvBomega}
holds for $\omega=\omega_{2}$ as well. The other required properties of $v_{\omega_2}$ are clear. Thus $\omega_2\in\clK$, and we have reached a contradiction since we assumed in \cref{eq:omega2def} that $\omega_2\not\in\clK$. Therefore, $(0,1]\setminus\clK$ is empty, and the proof is complete.%
\end{proof}

\section{The majority voting scheme}\label{sec:majvote}
In this section we prove \cref{prop:majorityiskpp}.
First, we compute the derivative of $F$.
\begin{prop}
    We have, for all $u\in [-1,1]$, that
    \begin{equation}\label{eq:Fprimeu}
        F'(u) = \sum_{n\ge 1}2^{1-n}p_n \sum_{k=1}^n \eta_{n,k} (n-k+1)\binom n {k-1}(1-u)^{n-k} (1+u)^{k-1}. %
    \end{equation}
\end{prop}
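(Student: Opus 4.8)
The plan is to differentiate the explicit polynomial expression for $F$ from \cref{eq:Fdef} and exploit a telescoping sum. Write $F(u)+1 = \sum_{n\ge1}2^{1-n}p_n\sum_{k=1}^n\eta_{n,k}\,S_{n,k}(u)$ with $S_{n,k}(u):=\sum_{j=k}^n\binom nj(1+u)^j(1-u)^{n-j} = 2^n\Pr_{(1+u)/2}(X_n\ge k)$. For each fixed $n$ this is a polynomial in $u$, so the differentiation is elementary; the only point requiring a word of care is interchanging $\tfrac{d}{du}$ with the sum over $n$, which I would justify by local uniform convergence of the differentiated series on $(-1,1)$ (under the same integrability of $(p_n)$ that makes $F$ itself $\clC^2$ on $[0,1]$), together with continuity up to $u=\pm1$.

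The computational core is the identity $S_{n,k}'(u) = n\binom{n-1}{k-1}(1+u)^{k-1}(1-u)^{n-k}$. To prove it, differentiate a single summand: $\frac{d}{du}\bigl[\binom nj(1+u)^j(1-u)^{n-j}\bigr] = \binom nj\bigl[j(1+u)^{j-1}(1-u)^{n-j}-(n-j)(1+u)^j(1-u)^{n-j-1}\bigr]$. Using $j\binom nj=n\binom{n-1}{j-1}$ and $(n-j)\binom nj=n\binom{n-1}{j}$, this equals $n(b_j-b_{j+1})$ where $b_j:=\binom{n-1}{j-1}(1+u)^{j-1}(1-u)^{n-j}$. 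Summing over $j=k,\dots,n$ telescopes to $n(b_k-b_{n+1})=nb_k$, since $b_{n+1}=0$ (as $\binom{n-1}{n}=0$). This yields $S_{n,k}'$ in the claimed form.

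Substituting back, $F'(u)=\sum_{n\ge1}2^{1-n}p_n\sum_{k=1}^n\eta_{n,k}\,n\binom{n-1}{k-1}(1+u)^{k-1}(1-u)^{n-k}$, and the elementary identity $n\binom{n-1}{k-1}=(n-k+1)\binom n{k-1}$ rewrites this exactly as \cref{eq:Fprimeu}. I do not anticipate any genuine obstacle: the argument is a one-line telescoping sum plus bookkeeping, and the only mildly delicate point is the differentiation-under-the-sum step noted above.
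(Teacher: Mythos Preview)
Your proof is correct. The telescoping computation of $S_{n,k}'(u)$ is clean and the binomial identity $n\binom{n-1}{k-1}=(n-k+1)\binom{n}{k-1}$ is exactly what is needed to match \cref{eq:Fprimeu}.

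Your route differs from the paper's. The paper does not differentiate the polynomial sum directly; instead it invokes the integral representation of the binomial cumulative distribution function,
\[
\Pr_p(X_n\le k)=(n-k)\binom{n}{k}\int_0^{1-p}q^{n-k-1}(1-q)^k\,\dif q,
\]
and differentiates that with respect to $p$ to obtain $\frac{\dif}{\dif p}\Pr_p(X_n\ge k)=(n-k+1)\binom{n}{k-1}(1-p)^{n-k}p^{k-1}$, then substitutes $p=(1+u)/2$ into the first form of \cref{eq:Fdef}. Your telescoping argument is more self-contained and elementary, requiring no external formula; the paper's argument is shorter once one recalls the incomplete beta function identity. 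Your remark about justifying the interchange of $\dif/\dif u$ with the sum over $n$ is a point the paper does not explicitly address either, so you are if anything being more careful.
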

\begin{proof}
    We recall the integral formula for the binomial CDF (see e.g.~\cite[p.~52]{WB60}):  
  \[
        \Pr_p(X_n\le k) = (n-k)\binom n k \int_0^{1-p} q^{n-k-1}(1-q)^k\,\dif q,
        ~~\hbox{for $0\le k\le n$.}
    \]
    This means that
    \[
        \frac\dif{\dif p}\Pr_p(X_n \ge k)=-\frac\dif{\dif p}\Pr_p(X_n \le k-1) = (n-k+1)\binom n{k-1}(1-p)^{n-k}p^{k-1}.
    \]
    Using this with \cref{eq:Fdef}, we obtain \cref{eq:Fprimeu}.
\end{proof}

\begin{proof}[Proof of \cref{prop:majorityiskpp}]
    First we compute $F'(0)$. From \cref{eq:Fprimeu} we derive
    \begin{equation}\label{eq:Fprime0prelim}
        F'(0) = \sum_{n\ge 1}2^{1-n}p_n \sum_{k=1}^n \eta_{n,k}(n-k+1)\binom n{k-1}.
    \end{equation}
    If $n$ is odd, then the inner sum is
    \begin{equation}\sum_{k=1}^n \eta_{n,k}(n-k+1)\binom n{k-1} = \frac{n+1}2\binom n{(n-1)/2}=\lceil n/2\rceil\binom n{\lfloor n/2\rfloor}.\label{eq:Fprime0odd}\end{equation}
    On the other hand, if $n$ is even, then the inner sum is
    \[\sum_{k=1}^n \eta_{n,k}(n-k+1)\binom n{k-1} = \frac12 (n/2+1)\binom n{n/2-1} +\frac12\cdot \frac n 2\binom n{n/2}.\]
    We note (still assuming that $n$ is even) that
    \begin{equation}
        \binom n {n/2-1} (n/2+1) = \frac{n!}{(n/2-1)!(n/2)!}=\binom{n}{n/2}\frac n2\label{eq:binomident}
    \end{equation}
    to see that 
    \begin{equation}\label{eq:Fprimezeroeven}\sum_{k=1}^n \eta_{n,k}(n-k+1)\binom n{k-1} = \frac n 2\binom n{n/2} = \lceil n/2\rceil \binom n {\lfloor n/2\rfloor},\end{equation}
    which in light of \cref{eq:Fprime0odd} now holds for all $n$.
    Using this in \cref{eq:Fprimeu}, we conclude \cref{eq:Fprime0}.

    Now we show that $F$ is concave on $[0,1]$, which will imply \cref{eq:KPP}. %
    Differentiating \cref{eq:Fprimeu}, we get
    \begin{align*}
        F''(u) &= \sum_{n\ge 1}2^{1-n}p_n \sum_{k=1}^n \eta_{n,k} (n-k+1)\binom n {k-1}\left[(k-1)(1-u)-(n-k) (1+u)\right]\\&\qquad\qquad\qquad\qquad\times(1-u)^{n-k-1}(1+u)^{k-2}. %
    \end{align*}
    When $u\ge 0$, we can write
    \[(k-1)(1-u)-(n-k)(1+u)\le k-1-(n-k)=2k-1-n,\]
    so we obtain, for $u\ge 0$,
    \begin{equation}
        F''(u) \le \sum_{n\ge 1}2^{1-n}p_n \sum_{k=1}^n \eta_{n,k} \binom n {k-1}(n-k+1)(2k-1-n)%
        (1-u)^{n-k-1}(1+u)^{k-2}. %
        \label{eq:fprimeprimebd}
    \end{equation}
    Now, using the choice \cref{eq:majorityetas}, we see that, for odd $n$, we have 
    $\eta_{n,k} = \delta_{k,\frac{n+1}{2}}$, so the $n$th term of the outer sum in \cref{eq:fprimeprimebd} is zero.  On the other hand, for even $n$, the inner sum becomes
    \begin{equation}
        \label{eq:eventerms}
    \begin{aligned}
        \sum_{k=1}^n &\eta_{n,k} \binom n {k-1}(n-k+1)(2k-1-n)
        (1-u)^{n-k-1}(1+u)^{k-2}.\\
                     &=-\frac12 \binom n {n/2-1}(n/2+1)
        (1-u)^{n/2-1}(1+u)^{n/2-2}
                   \\&\qquad+\frac12\binom n {n/2}(n/2)
        (1-u)^{n/2-2}(1+u)^{n/2-1}.
    \end{aligned}
\end{equation}
    We note that
    \[(1-u)^{n/2-2}(1+u)^{n/2-1}=\frac{1+u}{1-u}(1-u)^{n/2-1}(1+u)^{n/2-2} \ge (1-u)^{n/2-1}(1+u)^{n/2-2}\]
    for $u\ge 0$. Using this and \cref{eq:binomident} in \cref{eq:eventerms}, we see that
        \[\sum_{k=1}^n \eta_{n,k} \binom n {k-1}(n-k+1)(2k-1-n)
        (1-u)^{n-k-1}(1+u)^{k-2}\le 0,\]
        now (since we have already shown it for odd $n$) for all $n\ge 1$. Using this in \cref{eq:fprimeprimebd}, this means that $F''(u)\le 0$ for all $u\in [0,1]$, which means that $F$ is concave on $[0,1]$ and hence \cref{eq:KPP} holds.

        Finally, we assume that there is some $n\ge 3$ with $p_n>0$. We have from \cref{eq:Fprimeu} that
        \[F'(1) = \sum_{n\ge 1}p_n \eta_{n,n}n.\]
        For the majority voting scheme, we have \[\eta_{n,n} = \begin{cases}1,&n=1;\\1/2,&n=2;\\0,&n\ge 3,\end{cases}\]
        so
        \[F'(1) = p_1 + p_2<1.\] %
        Since $F(0)=0$ and $F(1)=1$, this and the concavity of $F$ tell us that $F'(0)>1$ and the equation $F(v)=v$ cannot have any solutions in $(0,1)$, so $\Xi_F = 1$.
\end{proof}

\emergencystretch=1em
\printbibliography
\end{document}